\documentclass{amsart}
\usepackage{amsfonts}
\usepackage{amsmath}
\usepackage{amssymb}
\usepackage{amsthm}
\usepackage{booktabs}
\usepackage{mathtools, comment}
\usepackage[dvipsnames]{xcolor}
\usepackage[shortlabels]{enumitem}

\usepackage{pdfpages}

\usepackage[margin= 0.9 in]{geometry}
\usepackage{varwidth}
\usepackage{subcaption}
\usepackage{multirow}
\usepackage{rotating}
\usepackage[boxsize=1em]{ytableau}

\usepackage{bbm}

\definecolor{darkred}{rgb}{0.7,0,0}
\definecolor{sapphire}{HTML}{1B4DC2}
\definecolor{copperred}{HTML}{DA6244}
\definecolor{teal}{HTML}{247BA0}
\definecolor{plum}{HTML}{9948AB}
\definecolor{viridian}{HTML}{0AAEAB}
\definecolor{darkcyan}{HTML}{099A98}
\definecolor{forest}{HTML}{2B9B0C}
\definecolor{rose}{HTML}{C10091}
\definecolor{pumpkin}{HTML}{E47604}
\definecolor{sunflower}{HTML}{F6AE2D}
\definecolor{maize}{HTML}{FDE34F}
\definecolor{saffron}{HTML}{DAA520}
\definecolor{auburn}{HTML}{A73937}

\colorlet{leftcolor}{copperred}
\colorlet{rightcolor}{teal}

\colorlet{kstrip}{rose}
\definecolor{A1strip}{HTML}{8AB017}
\definecolor{A2strip}{HTML}{359F18}
\definecolor{A3strip}{HTML}{006611}
\definecolor{B1strip}{HTML}{3EA9B3}
\definecolor{B2strip}{HTML}{0D69B0}
\definecolor{B3strip}{HTML}{0E0999}
\definecolor{B4strip}{HTML}{640999}
\definecolor{D1strip}{HTML}{E8C01E}
\definecolor{D2strip}{HTML}{DAA11C}
\definecolor{D3strip}{HTML}{D3821F}
\definecolor{E1strip}{HTML}{C7572E}
\definecolor{E2strip}{HTML}{AB3421}
\colorlet{Bstrip}{B2strip}
\colorlet{Astrip}{A2strip}

\definecolor{ssyt1}{HTML}{BEE8D8}
\definecolor{ssyt2}{HTML}{FFF2AA}
\definecolor{ssyt3}{HTML}{C7BBFF}
\definecolor{ssyt4}{HTML}{FFBFF7}
\definecolor{bssyt1}{HTML}{1EA974}
\definecolor{bssyt2}{HTML}{D5A618}
\definecolor{bssyt3}{HTML}{4A2DCB}
\definecolor{bssyt4}{HTML}{B81FA4}

\colorlet{IminColor}{forest}
\colorlet{IpluColor}{darkcyan}
\colorlet{kColor}{plum}

\usepackage[colorlinks, allcolors=sapphire, citecolor=viridian]{hyperref}
\usepackage[noabbrev,nameinlink]{cleveref}

\usepackage{tikz}
\usetikzlibrary{calc, shapes,arrows,positioning,cd, patterns, patterns.meta}
\tikzset{>=stealth',
  head/.style = {fill = white, text=black},
  plaque/.style = {draw, rectangle, minimum size = 10mm},
  pil/.style={->,thick},
  junct/.style = {draw,circle,inner sep=0.5pt,outer sep=0pt, fill=black}
  }
\tikzstyle{opLabel}=[fill=white, inner sep=2pt, rounded corners, pos=.3]
\tikzstyle{edgeLabel}=[fill=white, inner sep=2pt, rounded corners]
\tikzstyle{v}=[draw, fill =black, circle, inner sep=0pt, minimum size=2pt] 

\tikzstyle{Bv}=[draw, fill =black, circle, inner sep=0pt, minimum size=3pt] 
\tikzstyle{xMapsto}=[line width=.6pt, |-{Classical TikZ Rightarrow[length=1mm]}]

\tikzstyle{xRightArrow}=[line width=.6pt, -{Classical TikZ Rightarrow[length=1mm]}]
\tikzstyle{intEdge}=[Bracket-Bracket, shorten <= -.5pt, shorten >= -.5pt]

\tikzstyle{TabNode}=[inner sep=2pt, fill=white, rounded corners]
\tikzstyle{LabEdge}=[inner sep=1.75pt, fill=white, rounded corners]

\tikzstyle{box relations}=[inner sep=0, darkcyan, fill = white, circle, opacity = .8, text opacity=1, font=\footnotesize]

\newcommand\TIKZ[2][]{\begin{tikzpicture}[baseline={([yshift=-.8ex]current bounding box.center)}, #1]#2\end{tikzpicture}}

\tikzset{myarrow/.tip={_[sep=-1.9pt].To[length=3pt]}}
\tikzstyle{myimplies}=[double, double distance=1.5pt, -myarrow, shorten >=2pt]

\newcounter{r}
\newcounter{s}

\newcommand\Part[1]{
        \setcounter{r}{1}
	 \foreach \x in {#1}{
 	{\ifnum\value{r}=1
		\draw (0,\value{r}-1)--(\x,\value{r}-1); 
		\fi}
	\draw (0,\value{r}) to (\x,\value{r});
   	\foreach \y in {0, ..., \x} {\draw (\y,\value{r})--(\y,\value{r}-1);}
	\addtocounter{r}{1}
 }}

\newcounter{finalrow}
\newcommand\Tableau[2][\small]{
        \setcounter{r}{0}
        \setcounter{s}{0}
        \foreach \x [count = \c from 1] in {#2} {
		\foreach \y [count = \d from 1] in \x{
			\node[font=#1] at (\d-.5,\c-.5) {$\y$};
			\draw (\d,\c) to (\d,\c-1);
			{\ifnum\d=1
				\draw (0,\c) to (0,\c-1);
				\fi}
			\setcounter{r}{\d}
		}
	\pgfmathsetmacro{\rowlength}{max(\value{r},\value{s})}
	\draw (0,\c-1) to (\rowlength,\c-1); 
		\setcounter{s}{\value{r}}
		\setcounter{finalrow}{\c}
		}
	\draw (0,\value{finalrow})--(\value{s},\value{finalrow}); 
		}

\newcommand\Comp[1]{
        \setcounter{r}{1}
        \setcounter{s}{0}
	 \foreach \x in {#1}{
	{\ifnum\value{s}<\x
		\draw (0,\value{r}-1)--(\x,\value{r}-1); 
		\else
		\draw (0,\value{r}-1)--(\value{s},\value{r}-1); 
		\fi}
   	\foreach \y in {0, ..., \x} {\draw (\y,\value{r})--(\y,\value{r}-1);}
	\addtocounter{r}{1}
        \setcounter{s}{\x}
 }
\draw (0,\value{r}-1)--(\value{s},\value{r}-1); 
 }

\def\suffScale{.35}
\def\CZero{4}
\def\COne{10}
\def\CTwo{13}
\def\CThree{16}
\def\suffR{10}
\def\suffHt{11} 
\def\suffallHt{13}
\def\abit{1.2pt}
\def\suffCT{
\node[left, black!70] at (0, \suffR-.5) {row $r_0$};
\draw[black!50, thin, fill=black!10]
(0,0) to (12,0) coordinate (top1) 
\foreach \x [count = \r from 2] in 
	{\COne+6,\COne+7,7,\CZero,\COne+1,\CZero+2,2,\CZero+4,0}{
	to ++(0,1) coordinate (top\r) 
	to (\x,\r-1) coordinate (bot\r) 
	} 
	 to (0,0);
\path (bot\suffR) to ++(0,1) coordinate (bot\suffHt);
\coordinate (B1-left) at ([shift={(.5*\abit,\abit)}]0,\suffR);
	\coordinate (B1-right) at ([shift={(-\abit,-\abit)}]\CZero-1, \suffR+1);
	\path (bot5) to ++(\abit,.5*\abit) coordinate (B2-1-left) to ++(2,1) to ++(0, -\abit) coordinate (B2-1-right);
	\path (bot7) to ++(\abit,.5*\abit) coordinate (B2-2-left) to ++(2,1) to ++(0, -\abit) coordinate (B2-2-right);
	\path (bot9) to ++(\abit,.5*\abit) coordinate (B2-3-left) to ++(2,1) to ++(-\abit, -\abit) coordinate (B2-3-right);
\coordinate (B3-left) at ([shift={(\abit,\abit)}]\COne, \suffR-1);
	\coordinate (B3-right) at ([shift={(-\abit,-\abit)}]\CTwo, \suffR);
\coordinate (B'3-left) at ([shift={(0,1)}]B3-left);
	\coordinate (B'3-right) at ([shift={(0,1)}]B3-right);
\path (bot2) to ++(\abit,.5*\abit) coordinate (B4-left) to ++(2,1) to  ++(-2*\abit,-\abit) coordinate (B4-right);
\coordinate (A1-left) at ([shift={(.5*\abit,\abit)}]0, \suffR-1);
	\coordinate (A1-right) at ([shift={(-\abit,-\abit)}]\CZero-1, \suffR);
\path (\CZero, \suffR-1) to ++(\abit,\abit) coordinate (A2-left) to ++(0, 1) coordinate (B'2-left);
	\path (\COne, \suffR) to ++(-\abit,-\abit) coordinate (A2-right) to ++(0, 1) coordinate (B'2-right); 
\path (bot6) to ++(\abit,.5*\abit) coordinate (A3-1-left) to ++(3,1) to  ++(-2*\abit,-\abit) coordinate (A3-1-right);
\path (bot3) to ++(\abit,.5*\abit) coordinate (A3-2-left) to ++(2,1) to  ++(-2*\abit,-\abit) coordinate (A3-2-right);
\path (\CZero-1,\suffR-1) to ++(\abit,\abit) coordinate (k-left) 
	 (\CZero,\suffR) to ++(-\abit,-\abit) coordinate (k-right);
\coordinate (k'-left) at ([shift={(0,1)}]k-left); \coordinate (k'-right) at ([shift={(0,1)}]k-right); } 

\def\suffYT{
\TIKZ[scale = \suffScale, font=\footnotesize]{
\suffCT
\foreach \x/\c in {\CZero/0, \COne/1, \CTwo/2}{
	\draw[black!70, densely dotted] 
		(\x-.5,\suffHt+.5) node[above, inner sep=0pt]{\begin{tabular}{c}col\\[-3pt]$c_\c$\end{tabular}} to ++(0, -1.5);
}
	\draw[A1strip, thick, fill=A1strip!20] (A1-left) rectangle node[A1strip!80!black] {$A_1$} (A1-right); 
	\draw[A2strip, thick, fill=A1strip!20] (A2-left) rectangle node[A2strip!80!black]{$A_2$} (A2-right); 
	\draw[A3strip, thick, fill=A3strip!20] (A3-1-left) rectangle node[A3strip!80!black] {$A_3$} (A3-1-right); 
	\draw[A3strip, thick, fill=A3strip!20] (A3-2-left) rectangle (A3-2-right); 
	\path (A3-1-right) to ++(0,-.5) to ++(0, .5*\abit) coordinate (A3-v1) to ++(1.5,0) coordinate (A3-v2)
		(A3-2-left) to ++(0,.5) to ++(0, -.5*\abit) coordinate (A3-v4) to ++(-1.5,0) coordinate (A3-v3);
	\draw[A3strip, thick, densely dotted, rounded corners] (A3-v1) to (A3-v2) to (A3-v3) to (A3-v4);
	\draw[B1strip!80!black, fill = B1strip!20, thick] (B1-left) rectangle node[B1strip!80!black] {$B_1$} (B1-right); 
	\foreach \x in {1,2,3}{\draw[B2strip, thick, fill=B2strip!20] (B2-\x-left) rectangle coordinate (B2-\x-label)  (B2-\x-right);}
		\node[B2strip!80!black] at (B2-2-label) {$B_2$};
	\foreach \bot [count=\top from 2] in {1,2}{\draw[B2strip, thick] (B2-\bot-right) to (B2-\top-left);}
	\draw[B3strip, fill = B3strip!20, thick] (B3-left) rectangle node[B3strip!80!black] {$B_3$} (B3-right); 
	\draw[B4strip, fill = B4strip!20, thick] (B4-left) rectangle node[B4strip!80!black] {$B_4$} (B4-right); 
	\draw[thick, kstrip, fill = kstrip!20] (k-left) rectangle node[kstrip!90!black]{$k$} (k-right);
}
}

\def\suffIYT{
\TIKZ[scale = \suffScale, font=\footnotesize]{
\suffCT
\foreach \x/\c in {\CZero/0, \COne/1, \CTwo/2}{
	\draw[black!70, densely dotted] 
		(\x-.5,\suffHt+.5) node[above, inner sep=0pt]{\begin{tabular}{c}col\\[-3pt]$c_\c$\end{tabular}} to ++(0, -.5);
}
	\draw[A1strip, thick, fill=A1strip!20] (A1-left) rectangle node[A1strip!80!black] {$A_1$} (A1-right); 
	\draw[B2strip, thick, fill=B2strip!20] (B'2-left) rectangle node[B2strip!80!black]{$B_2$} (B'2-right); 
	\draw[A3strip, thick, fill=A3strip!20] (A3-1-left) rectangle node[A3strip!80!black] {$I \cdot A_3$} (A3-1-right); 
	\draw[A3strip, thick, fill=A3strip!20] (A3-2-left) rectangle (A3-2-right); 
	\path (A3-1-right) to ++(0,-.5) to ++(0, .5*\abit) coordinate (A3-v1) to ++(1.5,0) coordinate (A3-v2)
		(A3-2-left) to ++(0,.5) to ++(0, -.5*\abit) coordinate (A3-v4) to ++(-1.5,0) coordinate (A3-v3);
	\draw[A3strip, thick, densely dotted, rounded corners] (A3-v1) to (A3-v2) to (A3-v3) to (A3-v4);
	\draw[B1strip!80!black, fill = B1strip!20, thick] (B1-left) rectangle node[B1strip!80!black] {$I \cdot B_1$} (B1-right); 
	\foreach \x in {1,2,3}{\draw[A2strip, thick, fill=A2strip!20] (B2-\x-left) rectangle coordinate (B2-\x-label)  (B2-\x-right);}
		\node[A2strip!80!black] at (B2-2-label) {$I\!\cdot\!A_2$};
	\foreach \bot [count=\top from 2] in {1,2}{\draw[A2strip, thick] (B2-\bot-right) to (B2-\top-left);}
	\draw[B3strip, fill = B3strip!20, thick] (B'3-left) rectangle node[B3strip!80!black] {$B_3$} (B'3-right); 
	\draw[B4strip, fill = B4strip!20, thick] (B4-left) rectangle node[B4strip!80!black] {$B_4$} (B4-right); 
	\draw[thick, kstrip, fill = kstrip!20] (k'-left) rectangle node[Bv, kstrip!90!black](k'-label){} (k'-right);
	\draw[<-, kstrip!90!black, rounded corners] (k'-label) to ++(.1,-1) to ++(.5,0) node[right, inner sep=1pt] {$k+m$};
}
}

\def\suffCTall{
\node[left, black!70] at (0, \suffR-.5) {row $r_0$};
\draw[black!60, thin, fill=D2strip!20!black!20]
(0,0) to (12+2,0) coordinate (all-top1) 
\foreach \x [count = \r from 2] in 
    {\COne+6,\COne+7,7+3,\CZero+4,\COne+1,\CZero+2+4,2+\CZero+9,\CZero+4+9,0,0,\COne+2, \CZero + 1, 0}{
	to ++(0,1) coordinate (all-top\r) 
	to (\x,\r-1) coordinate (all-bot\r) 
	} 
	 to (0,0);
\draw[black!60,thin, fill=black!5]
(0,0) to (12,0) coordinate (top1) 
\foreach \x [count = \r from 2] in 
	{\COne+6,\COne+7,7,\CZero,\COne+1,\CZero+2,2,\CZero+4,0}{
	to ++(0,1) coordinate (top\r) 
	to (\x,\r-1) coordinate (bot\r) 
	} 
	 to (0,0);
\path (bot\suffR) to ++(0,1) coordinate (bot\suffHt);
\node[above right, black!60] at (1,1) {\tiny$[1, i-1]$};
\coordinate (B1-left) at ([shift={(.5*\abit,\abit)}]0,\suffR);
	\coordinate (B1-right) at ([shift={(-\abit,-\abit)}]\CZero-1, \suffR+1);
	\path (bot5) to ++(\abit,.5*\abit) coordinate (B2-1-left) to ++(2,1) to ++(0, -\abit) coordinate (B2-1-right);
	\path (bot7) to ++(\abit,.5*\abit) coordinate (B2-2-left) to ++(2,1) to ++(0, -\abit) coordinate (B2-2-right);
	\path (bot9) to ++(\abit,.5*\abit) coordinate (B2-3-left) to ++(2,1) to ++(-\abit, -\abit) coordinate (B2-3-right);
\coordinate (B3-left) at ([shift={(\abit,\abit)}]\COne, \suffR-1);
	\coordinate (B3-right) at ([shift={(-\abit,-\abit)}]\CTwo, \suffR);
\coordinate (B'3-left) at ([shift={(0,1)}]B3-left);
	\coordinate (B'3-right) at ([shift={(0,1)}]B3-right);
\path (bot2) to ++(\abit,.5*\abit) coordinate (B4-left) to ++(2,1) to  ++(-2*\abit,-\abit) coordinate (B4-right);
\coordinate (A1-left) at ([shift={(.5*\abit,\abit)}]0, \suffR-1);
	\coordinate (A1-right) at ([shift={(-\abit,-\abit)}]\CZero-1, \suffR);
\path (\CZero, \suffR-1) to ++(\abit,\abit) coordinate (A2-left) to ++(0, 1) coordinate (B'2-left);
	\path (\COne, \suffR) to ++(-\abit,-\abit) coordinate (A2-right) to ++(0, 1) coordinate (B'2-right); 
\path (bot6) to ++(\abit,.5*\abit) coordinate (A3-1-left) to ++(3,1) to  ++(-2*\abit,-\abit) coordinate (A3-1-right);
\path (bot3) to ++(\abit,.5*\abit) coordinate (A3-2-left) to ++(2,1) to  ++(-2*\abit,-\abit) coordinate (A3-2-right);
\path (\CZero-1,\suffR-1) to ++(\abit,\abit) coordinate (k-left) 
	 (\CZero,\suffR) to ++(-\abit,-\abit) coordinate (k-right);
\coordinate (k'-left) at ([shift={(0,1)}]k-left); \coordinate (k'-right) at ([shift={(0,1)}]k-right); 
\path (\CZero-1, \suffR) to ++(\abit,\abit) coordinate (D1-left) to ++(0, -1) coordinate (D1'-left);
\path (\CTwo, \suffR+1) to ++(-\abit,-\abit) coordinate (D1-right) to ++(0, -1) coordinate (D1'-right); 
\path (\CTwo, \suffR) to ++(\abit,\abit) coordinate (D2-left) to ++(0, -1) coordinate (E1-left);
\path (\CThree, \suffR+1) to ++(-\abit,-\abit) coordinate (D2-right) to ++(0, -1) coordinate (E1-right); 
\path (\CThree, \suffR) to ++(\abit,\abit) coordinate (D3-SW) to ++(0, -1) coordinate (E2-SW);
\path (\CThree+3, \suffR+1) to ++(-\abit,-\abit) coordinate (D3-NE) to ++(0, -1) coordinate (E2-NE);
\path (D3-SW) to ++(0,1) to ++(0,-2*\abit) coordinate (D3-NW);
\path (D3-NE) to ++(0,-1) to ++(0,2*\abit) coordinate (D3-SE);
\path (E2-SW) to ++(0,1) to ++(0,-2*\abit) coordinate (E2-NW);
\path (E2-NE) to ++(0,-1) to ++(0,2*\abit) coordinate (E2-SE);
}

\def\semistableCore{
\node[left, black!70] at (0, \suffR-.5) {row $r_0$};
\draw[black!60, thin, fill=D2strip!20!black!20]
(0,0) to (12+2,0) coordinate (all-top1) 
\foreach \x [count = \r from 2] in 
    {\COne+6,\COne+7,7+3,\CZero+4,\COne+1,\CZero+2+4,2+\CZero+9,\CZero+4+9,0,0,\COne+2, \CZero + 1, 0}{
	to ++(0,1) coordinate (all-top\r) 
	to (\x,\r-1) coordinate (all-bot\r) 
	} 
	 to (0,0);
\draw[black!60,thin, fill=black!5]
(0,0) to (12,0) coordinate (top1) 
\foreach \x [count = \r from 2] in 
	{\COne+6,\COne+7,7,\CZero,\COne+1,\CZero+2,2,\CZero+4,0}{
	to ++(0,1) coordinate (top\r) 
	to (\x,\r-1) coordinate (bot\r) 
	} 
	 to (0,0);
\path (bot\suffR) to ++(0,1) coordinate (bot\suffHt);
\node[above right, black!60] at (1,1) {\tiny$[1, i-1]$};
\coordinate (B1-left) at ([shift={(.5*\abit,\abit)}]0,\suffR);
	\coordinate (B1-right) at ([shift={(-\abit,-\abit)}]\CZero-1, \suffR+1);
	\path (bot5) to ++(\abit,.5*\abit) coordinate (B2-1-left) to ++(2,1) to ++(0, -\abit) coordinate (B2-1-right);
	\path (bot7) to ++(\abit,.5*\abit) coordinate (B2-2-left) to ++(2,1) to ++(0, -\abit) coordinate (B2-2-right);
	\path (bot9) to ++(\abit,.5*\abit) coordinate (B2-3-left) to ++(2,1) to ++(-\abit, -\abit) coordinate (B2-3-right);
\coordinate (B3-left) at ([shift={(\abit,\abit)}]\COne, \suffR-1);
	\coordinate (B3-right) at ([shift={(-\abit,-\abit)}]\CTwo, \suffR);
\coordinate (B'3-left) at ([shift={(0,1)}]B3-left);
	\coordinate (B'3-right) at ([shift={(0,1)}]B3-right);
\path (bot2) to ++(\abit,.5*\abit) coordinate (B4-left) to ++(2,1) to  ++(-2*\abit,-\abit) coordinate (B4-right);
\coordinate (A1-left) at ([shift={(.5*\abit,\abit)}]0, \suffR-1);
	\coordinate (A1-right) at ([shift={(-\abit,-\abit)}]\CZero-1, \suffR);
\path (\CZero, \suffR-1) to ++(\abit,\abit) coordinate (A2-left) to ++(0, 1) coordinate (B'2-left);
	\path (\COne, \suffR) to ++(-\abit,-\abit) coordinate (A2-right) to ++(0, 1) coordinate (B'2-right); 
\path (bot6) to ++(\abit,.5*\abit) coordinate (A3-1-left) to ++(3,1) to  ++(-2*\abit,-\abit) coordinate (A3-1-right);
\path (bot3) to ++(\abit,.5*\abit) coordinate (A3-2-left) to ++(2,1) to  ++(-2*\abit,-\abit) coordinate (A3-2-right);
\path (\CZero-1,\suffR-1) to ++(\abit,\abit) coordinate (k-left) 
	 (\CZero,\suffR) to ++(-\abit,-\abit) coordinate (k-right);
\coordinate (k'-left) at ([shift={(0,1)}]k-left); \coordinate (k'-right) at ([shift={(0,1)}]k-right); 
\path (\CZero-1, \suffR) to ++(\abit,\abit) coordinate (D1-SW) to ++(0, -1) coordinate (D1'-SW);
\path (\CTwo, \suffR+1) to ++(-\abit,-\abit) coordinate (D1-NE) to ++(0, -1) coordinate (D1'-NE); 
\path (\CTwo, \suffR) to ++(\abit,\abit) coordinate (D2-SW) to ++(0, -1) coordinate (E1-SW);
\path (\CThree, \suffR+1) to ++(-\abit,-\abit) coordinate (D2-NE) to ++(0, -1) coordinate (E1-NE); 
\path (\CThree, \suffR) to ++(\abit,\abit) coordinate (D3-SW) to ++(0, -1) coordinate (E2-SW);
\path (\CThree+3, \suffR+1) to ++(-\abit,-\abit) coordinate (D3-NE) to ++(0, -1) coordinate (E2-NE);
\path (D3-SW) to ++(0,1) to ++(0,-2*\abit) coordinate (D3-NW);
\path (D3-NE) to ++(0,-1) to ++(0,2*\abit) coordinate (D3-SE);
\path (E2-SW) to ++(0,1) to ++(0,-2*\abit) coordinate (E2-NW);
\path (E2-NE) to ++(0,-1) to ++(0,2*\abit) coordinate (E2-SE);
\path (D1-SW) to ++(0,1) to ++(0,-2*\abit) coordinate (D1-NW);
\path (D1-NE) to ++(0,-1) to ++(0,2*\abit) coordinate (D1-SE);
\path (E1-SW) to ++(0,1) to ++(0,-2*\abit) coordinate (E1-NW);
\path (E1-NE) to ++(0,-1) to ++(0,2*\abit) coordinate (E1-SE);
\path (D2-SW) to ++(0,1) to ++(0,-2*\abit) coordinate (D2-NW);
\path (D2-NE) to ++(0,-1) to ++(0,2*\abit) coordinate (D2-SE);
}

\def\semistableYT{
\TIKZ[scale = \suffScale, font=\footnotesize]{
\semistableCore
 \foreach \x/\lab in {A1/A_1, B1/B_1,A2/A_2,B3/B_3,B2-1/{},B2-2/B_2,B2-3/{},A3-1/A_3,A3-2/{},B4/B_4}{
 	\draw[Bstrip!50, fill = Bstrip!10, thick] (\x-left) rectangle node{$\lab$}(\x-right);}
	\path (A3-1-right) to ++(0,-.5) to ++(0, .5*\abit) coordinate (A3-v1) to ++(1.5,0) coordinate (A3-v2)
		(A3-2-left) to ++(0,.5) to ++(0, -.5*\abit) coordinate (A3-v4) to ++(-1.5,0) coordinate (A3-v3);
	\draw[Bstrip!50, thick, densely dotted, rounded corners] (A3-v1) to (A3-v2) to (A3-v3) to (A3-v4);
	\foreach \bot [count=\top from 2] in {1,2}{\draw[Bstrip!50, thick] (B2-\bot-right) to (B2-\top-left);}
 	\draw[thick, Bstrip, fill = Bstrip!30] (k-left) rectangle node[Bstrip!80!black]{$k$} (k-right);
	\draw[D1strip, fill = D1strip!20, thick] (D3-NE) to (D1-NW) to (D1-SW) to (D3-SE);
	\draw[E1strip, fill = E1strip!20, thick] (E2-NE) to (E1-NW) to (E1-SW) to (E2-SE);
	\path  (D3-NE) to node[D2strip!80!black] {$\dots$} (D3-SE);
	\path  (E2-NE) to node[E1strip!80!black] {$\dots$} (E2-SE);
}
}

\def\semistableIYT{
\TIKZ[scale = \suffScale, font=\footnotesize]{
\semistableCore
 \foreach \x/\lab in {A1/A_1, B1/I\cdot B_1,B'2/B_2,B'3/B_3,B2-1/{},B2-2/{I\! \cdot \! A_2},B2-3/{},A3-1/I \cdot A_3,A3-2/{},B4/B_4}{
 	\draw[Bstrip!50, fill = Bstrip!10, thick] (\x-left) rectangle node{$\lab$}(\x-right);}
	\path (A3-1-right) to ++(0,-.5) to ++(0, .5*\abit) coordinate (A3-v1) to ++(1.5,0) coordinate (A3-v2)
		(A3-2-left) to ++(0,.5) to ++(0, -.5*\abit) coordinate (A3-v4) to ++(-1.5,0) coordinate (A3-v3);
	\draw[Bstrip!50, thick, densely dotted, rounded corners] (A3-v1) to (A3-v2) to (A3-v3) to (A3-v4);
	\foreach \bot [count=\top from 2] in {1,2}{\draw[Bstrip!50, thick] (B2-\bot-right) to (B2-\top-left);}
	\draw[thick, Bstrip, fill = Bstrip!30] (k'-left) rectangle node[Bstrip!80!black]{$k'$} (k'-right);
	\draw[D1strip, fill = D1strip!20, thick] (D1'-SW) rectangle (D1'-NE); 
	\fill[E2strip!20] 
		(D3-NE) to (D2-NW) to (E1-SW) to (E2-SE);
	\draw[pattern={Lines[angle=0, distance=4pt, line width=2pt]}, pattern color=D2strip!20, draw=D3strip, thick] 
		(D3-NE) to (D2-NW) to (E1-SW) to (E2-SE);
	\path  (D3-NE) to node[D3strip!80!black] {$\dots$} (D3-SE);
	\path  (E2-NE) to node[D3strip!80!black] {$\dots$} (E2-SE);
	\draw[D3strip, thick] ([shift={(0, 2*\abit)}]E1-NW) to ([shift={(0, 2*\abit)}]E2-NE);
}
}

\def\suffYTallColor{
\TIKZ[scale = \suffScale, font=\footnotesize]{
\suffCTall
\foreach \x/\c in {\CZero/0, \COne/1, \CTwo/2, \CThree/3}{
	\draw[black!70, densely dotted] 
		(\x-.5,\suffallHt+.5) node[above, inner sep=0pt]{\begin{tabular}{c}col\\[-3pt]$c_\c$\end{tabular}} to (\x-.5, \suffHt-1);}
	\draw[A1strip, thick, fill=A1strip!20] (A1-left) rectangle node[A1strip!80!black] {$A_1$} (A1-right); 
	\draw[A2strip, thick, fill=A1strip!20] (A2-left) rectangle node[A2strip!80!black]{$A_2$} (A2-right); 
	\draw[A3strip, thick, fill=A3strip!20] (A3-1-left) rectangle node[A3strip!80!black] {$A_3$} (A3-1-right); 
	\draw[A3strip, thick, fill=A3strip!20] (A3-2-left) rectangle (A3-2-right); 
	\path (A3-1-right) to ++(0,-.5) to ++(0, .5*\abit) coordinate (A3-v1) to ++(1.5,0) coordinate (A3-v2)
		(A3-2-left) to ++(0,.5) to ++(0, -.5*\abit) coordinate (A3-v4) to ++(-1.5,0) coordinate (A3-v3);
	\draw[A3strip, thick, densely dotted, rounded corners] (A3-v1) to (A3-v2) to (A3-v3) to (A3-v4);
	\draw[B1strip!80!black, fill = B1strip!20, thick] (B1-left) rectangle node[B1strip!80!black] {$B_1$} (B1-right); 
	\foreach \x in {1,2,3}{\draw[B2strip, thick, fill=B2strip!20] (B2-\x-left) rectangle coordinate (B2-\x-label)  (B2-\x-right);}
		\node[B2strip!80!black] at (B2-2-label) {$B_2$};
	\foreach \bot [count=\top from 2] in {1,2}{\draw[B2strip, thick] (B2-\bot-right) to (B2-\top-left);}
	\draw[B3strip, fill = B3strip!20, thick] (B3-left) rectangle node[B3strip!80!black] {$B_3$} (B3-right); 
	\draw[B4strip, fill = B4strip!20, thick] (B4-left) rectangle node[B4strip!80!black] {$B_4$} (B4-right); 
	\draw[thick, kstrip, fill = kstrip!20] (k-left) rectangle node[kstrip!90!black]{$k$} (k-right);
	\draw[D1strip, fill = D1strip!20, thick] (D1-left) rectangle node[D1strip!80!black] {$D_1$} (D1-right); 
	\draw[D2strip, fill = D2strip!20, thick] (D2-left) rectangle node[D2strip!80!black] {$D_2$} (D2-right); 
	\draw[E1strip, fill = E1strip!20, thick] (E1-left) rectangle node[E1strip!80!black] {$E_1$} (E1-right); 
	\draw[D3strip, fill = D3strip!20, thick] (D3-NE) to (D3-NW) to (D3-SW) to (D3-SE);
	\path  (D3-NE) to node[D3strip!80!black] {$D_3$} (D3-SW);
	\path  (D3-NE) to node[E2strip!80!black] {$\dots$} (D3-SE);
	\draw[E2strip, fill = E2strip!20, thick] (E2-NE) to (E2-NW) to (E2-SW) to (E2-SE);
	\path  (E2-NE) to node[E2strip!80!black] {$E_2$} (E2-SW);
	\path  (E2-NE) to node[E2strip!80!black] {$\dots$} (E2-SE);
	\path  (E1-right) to node[rotate = 90] {$<$} ++(-1,0);
	\path  (E2-NW) to node[rotate = 90] {$\geqslant$} ++(1,0);
}
}

\def\suffIYTallColor{
\TIKZ[scale = \suffScale, font=\footnotesize]{
\suffCTall
\foreach \x/\c in {\CZero/0, \COne/1, \CTwo/2, \CThree/3}{
	\draw[black!70, densely dotted] 
		(\x-.5,\suffallHt+.5) node[above, inner sep=0pt]{\begin{tabular}{c}col\\[-3pt]$c_\c$\end{tabular}} to (\x-.5, \suffHt);}
	\draw[A1strip, thick, fill=A1strip!20] (A1-left) rectangle node[A1strip!80!black] {$A_1$} (A1-right); 
	\draw[B2strip, thick, fill=B2strip!20] (B'2-left) rectangle node[B2strip!80!black]{$B_2$} (B'2-right); 
	\draw[A3strip, thick, fill=A3strip!20] (A3-1-left) rectangle node[A3strip!80!black] {$I \cdot A_3$} (A3-1-right); 
	\draw[A3strip, thick, fill=A3strip!20] (A3-2-left) rectangle (A3-2-right); 
	\path (A3-1-right) to ++(0,-.5) to ++(0, .5*\abit) coordinate (A3-v1) to ++(1.5,0) coordinate (A3-v2)
		(A3-2-left) to ++(0,.5) to ++(0, -.5*\abit) coordinate (A3-v4) to ++(-1.5,0) coordinate (A3-v3);
	\draw[A3strip, thick, densely dotted, rounded corners] (A3-v1) to (A3-v2) to (A3-v3) to (A3-v4);
	\draw[B1strip!80!black, fill = B1strip!20, thick] (B1-left) rectangle node[B1strip!80!black] {$I \cdot B_1$} (B1-right); 
	\foreach \x in {1,2,3}{\draw[A2strip, thick, fill=A2strip!20] (B2-\x-left) rectangle coordinate (B2-\x-label)  (B2-\x-right);}
		\node[A2strip!80!black] at (B2-2-label) {$I\!\cdot\!A_2$};
	\foreach \bot [count=\top from 2] in {1,2}{\draw[A2strip, thick] (B2-\bot-right) to (B2-\top-left);}
	\draw[B3strip, fill = B3strip!20, thick] (B'3-left) rectangle node[B3strip!80!black] {$B_3$} (B'3-right); 
	\draw[B4strip, fill = B4strip!20, thick] (B4-left) rectangle node[B4strip!80!black] {$B_4$} (B4-right); 
	\draw[thick, kstrip, fill = kstrip!20] (k'-left) rectangle node[kstrip!90!black]{$k'$} (k'-right);
	\draw[D1strip, fill = D1strip!20, thick] (D1'-left) rectangle node[D1strip!80!black] {$D_1$} (D1'-right); 
	\draw[D2strip, fill = D2strip!20, thick] (E1-left) rectangle node[D2strip!80!black] {$D_2$} (E1-right); 
	\draw[E1strip, fill = E1strip!20, thick] (D2-left) rectangle node[E1strip!80!black] {$E_1$} (D2-right); 
	\draw[D3strip, fill = D3strip!20, thick] (D3-NE) to (D3-NW) to (D3-SW) to (D3-SE);
	\path  (D3-NE) to node[D3strip!80!black] {$D_3$} (D3-SW);
	\path  (D3-NE) to node[E2strip!80!black] {$\dots$} (D3-SE);
	\draw[E2strip, fill = E2strip!20, thick] (E2-NE) to (E2-NW) to (E2-SW) to (E2-SE);
	\path  (E2-NE) to node[E2strip!80!black] {$E_2$} (E2-SW);
	\path  (E2-NE) to node[E2strip!80!black] {$\dots$} (E2-SE);
	\path  (E1-right) to node[rotate = 90] {$>$} ++(-1,0);
	\path  (E2-NW) to node[rotate = 90] {$\geqslant$} ++(1,0);
}
}

\def\ABtwoCore{
\coordinate (T) at (5.5, 12.5);
\filldraw[black!20, pattern=north east lines, pattern color=black!20] 
	(0,9.5) coordinate (row r) rectangle ++(11,-.5);
\filldraw[black!10]
(0,0) to (1,0) coordinate (x1)
\foreach \x [count = \r from 2] in 
	{5, 7, 10, 5, 5, 10,11,8,0}{
	to ++(0,1) coordinate (y\r) 
	to (\x,\r-1) coordinate (x\r) 
	} 
	to ++(0,1);
\draw[black!40] (x1) 
\foreach \r in {2, 3, ..., 8}{to (y\r) to (x\r)};
\draw [densely dotted] 
	(x1) to (0,0) to (x10) to ++(0,1.5)
	(x8) to (y9)
	(1,0) to ++(0,9.5);
\draw[|-|, black!50] (-.5,0) to node[above, sloped]{rows $r_1$ through $r_{c_1-c_0}$} (-.5,9);}

\def\rowR{2.25}
\newcommand{\Triple}[3]
{\draw (0,\rowR) rectangle +(2,1);
		\draw (1,\rowR) to +(0,1);
		\node (x) at (0.5, \rowR+.5) {$#1$\strut};
		\node (y) at (1.5, \rowR+.5) {$#2$\strut}; 
	\draw (1,0) rectangle +(1,1);
    		\node (z) at (1.5, .5) {$#3$\strut};  
	\draw (1.5, 1) to 
		node[fill=white, sloped, inner sep = .5pt]{\footnotesize$<$} 
		+(-1, \rowR-1);
	\draw[densely dashed] (1.5, 1) to +(0, \rowR-1);
\coordinate (label) at (0,\rowR/2+.5);
\coordinate (top row) at  (2,\rowR+.5);
\coordinate (bot row) at  (2,.5);}

\newcommand{\cell}{\mathsf{cell}}

\newcommand{\col}{\mathsf{col}}
\newcommand{\Col}{\mathsf{Col}}
\newcommand{\CS}{\mathsf{CS}}
\newcommand{\cycle}{\mathsf{cycle}}

\newcommand{\QCS}{\mathsf{QCS}}
\newcommand{\QS}{\mathcal{QS}}
\newcommand{\QSym}{\mathsf{QSym}}

\newcommand{\row}{{\mathsf{row}}}
\newcommand{\shape}{\mathsf{shape}}

\newcommand{\SYT}{\mathsf{SYT}}

\newcommand{\wt}{{\mathsf{wt}}}
\newcommand{\yc}[1]{\widehat{#1}}

\newcommand{\YQS}{\mathcal{YQS}}

\newcommand{\sm}{\scalebox{.75}[1.0]{\hspace{.2pt}$-$}}

\newcommand{\fsl}{\mathfrak{sl}}

\newcommand{\xdownarrow}[1]{%
  {\left\downarrow\vbox to #1{}\right.\kern-\nulldelimiterspace}
}

\usepackage[colorinlistoftodos]{todonotes}

\newtheorem{theorem}{Theorem}[section]

\newtheorem{corollary}[theorem]{Corollary}
\newtheorem{lemma}[theorem]{Lemma}

\newtheorem{proposition}[theorem]{Proposition}

\theoremstyle{definition}
\newtheorem{definition}[theorem]{Definition}

\newtheorem{remark}[theorem]{Remark}
\newtheorem{example}[theorem]{Example}

\numberwithin{equation}{section}

\newcommand{\defncolor}{\color{sapphire!70!black}}
\newcommand{\defn}[1]{{\defncolor\emph{#1}}} 

\title[Contractions and applications of crystal skeletons]
{Contractions and applications of crystal skeletons: \\Young quasisymmetric and Stanley symmetric functions}

\author[Brauner]{Sarah Brauner}
\address[S.\ Brauner]{Department of Mathematics, University of Pennsylvania, Philadelphia, PA, USA}
\email{sarahbrauner@gmail.com}
\urladdr{\href{https://www.sarahbrauner.com/}{https://www.sarahbrauner.com/}}

\author[Daugherty]{Zajj Daugherty}
\address[Z.\ Daugherty]{Department of Mathematics and Statistics, Reed College, 3203 SE Woodstock Blvd, Portland, OR 97202-8199, USA}
\email{zdaugherty@reed.edu}
\urladdr{\url{https://people.reed.edu/~zdaugherty/}}

\author[Mason]{Sarah Mason}
\address[S.\ Mason]{Wake Forest University, 
1834 Wake Forest Road, Winston-Salem, NC, USA}
\email{sarahkmason@gmail.com}
\urladdr{\href{https://sites.google.com/wfu.edu/sarahmason/}{https://sites.google.com/wfu.edu/sarahmason/}}

\author[Schilling]{Anne Schilling}
\address[A. Schilling]{Department of Mathematics, University of California, One Shields
Avenue, Davis, CA 95616-8633, U.S.A.}
\email{aschilling@ucdavis.edu}
\urladdr{\href{http://www.math.ucdavis.edu/~anne}{http://www.math.ucdavis.edu/~anne}}

\date{\today}

\begin{document}

\maketitle

\begin{abstract}
The character of a connected $\fsl_n$-crystal is a Schur polynomial; the crystal can be further decomposed into quasicrystals, 
whose characters are the Gessel quasisymmetric functions. Crystal skeletons are obtained by contracting quasicrystals within 
crystal graphs. They generalize dual equivalence graphs, and can be used to prove the Schur expansion of a symmetric function
when the quasisymmetric expansion is known. In this paper, we show that the crystal skeleton can be tiled further into components
which we call quasicrystal skeletons, whose characters are Young quasisymmetric Schur functions. We characterize which edges in the crystal
skeleton move between quasicrystal skeleton components. Contracting the quasicrystal skeleton components yields Bruhat order.
We illustrate how these tools can be applied to symmetric functions by analyzing the Stanley symmetric functions.
\end{abstract}

\tableofcontents

\section{Introduction}
In this paper, we study various contractions of \defn{$\fsl_n$-crystals} and their connections to symmetric and quasisymmetric functions, as summarized 
in Figure~\ref{figure.summary}.

\defn{Crystal bases} provide combinatorial tools to study the representation theory of Lie algebras (see~\cite{BumpSchilling.2017} for details). 
Characters of $\mathfrak{sl}_n$-crystals are \defn{Schur polynomials}. In the same way that a Schur function can be written as a sum of Gessel's 
fundamental quasisymmetric functions $F_\alpha$ ~\cite{Gessel.1984}, crystals can be decomposed into \defn{quasicrystals} whose characters are 
the $F_\alpha$. Quasicrystals were studied in~\cite{CMRR.2023,CMRR.2025,MG.2023}.

Quasicrystals are well-understood, but how they tile the crystal is far more complex. In an effort to understand this tiling,
Maas-Gari\'epy~\cite{MG.2023} introduced \defn{crystal skeletons} by contracting quasicrystals within crystal graphs.
One can thus understand the crystal skeleton as the representation theoretic manifestation of Gessel's formula 
\begin{equation}
\label{equation.s F}
    s_\lambda = \sum_{T \in \mathsf{SYT}(\lambda)} F_{\mathsf{Des}(T)},    
\end{equation}
where $\mathsf{SYT}(\lambda)$ is the set of standard Young tableaux of shape $\lambda$ and $\mathsf{Des}(T)$ is the descent composition of 
$T\in \mathsf{SYT}(\lambda)$.

In~\cite{BCDS.2025}, the authors give combinatorial and axiomatic descriptions of crystals skeletons, proving that crystal skeletons generalize dual 
equivalence graphs, and have many intriguing properties such as branching rules, a Lusztig involution symmetry, and a subcrystal property. The axiomatic 
framework in \cite{BCDS.2025} suggests that crystal skeletons can be used as a means to prove Schur expansions of symmetric functions
when the quasisymmetric expansion into the fundamental basis is known. 

In this paper, we show that the crystal skeleton can be further tiled  into components which we 
call \defn{quasicrystal skeletons}, whose characters are the \defn{Young quasisymmetric Schur functions}
$\YQS_\alpha$, indexed by compositions $\alpha$~\cite{HLMvW.2011,LMvW.2013}.
 The tiling of the crystal skeleton into quasicrystal skeletons is the representation theoretic interpretation of the formula
\begin{equation}
\label{equation.s YQS}
	s_{\lambda} = \sum_{\alpha} \YQS_{\alpha},
\end{equation}
where the sum is over all compositions $\alpha$ that rearrange $\lambda$. Contracting the quasicrystal skeleton components---analogous to contracting
the quasicrystals inside the crystal to obtain the crystal skeleton---yields \defn{Bruhat order}. 
This is summarized in Figure~\ref{figure.summary}.

\begin{figure}[t]
\centering
\TIKZ[thick]{
\begin{scope}
\node[label={[name=cB]below, yshift=5pt, sapphire:
	{$\mathsf{char} B(\lambda) = s_\lambda$\strut}}]
	(B) at (0, 2.75) {Crystal $B(\lambda)$\strut};
\node[label={below, yshift=5pt, sapphire}:
	{$\mathsf{char} Q_\alpha = F_\alpha$\strut}]
	(Q) at (0, 0) {Quasicrystal $Q_\alpha$\strut};
\draw[{Hooks[right, scale width=1.5]}->] (Q) to 
	node[above, rotate=90, outer sep=2pt]{$\subseteq$}
	coordinate (crystal ref) (cB);
\node[label={[name=cCS]below, yshift=5pt, sapphire:
	{$\mathsf{char} \mathsf{CS}(\lambda) = s_\lambda$\strut}}]
	(CS) at (6,2.75) {Crystal skeleton $\mathsf{CS}(\lambda)$\strut};
\node[label={below, yshift=5pt, sapphire}:
	{$\mathsf{char} \mathsf{QCS}_\alpha = \YQS_\alpha$\strut}]
	(QCS) at (6,0) {Quasicrystal skeleton $\mathsf{QCS}_\alpha$\strut};
\draw[{Hooks[right, scale width=1.5]}->] (QCS) to 
	node[above, rotate=90, outer sep=2pt]{$\subseteq$}
	coordinate (skeleton ref) (cCS);
\end{scope}
\path (skeleton ref) to ++(4.5,0) coordinate (coord_Bruhat);
\node (Bruhat) at (coord_Bruhat) {Bruhat};
\begin{scope}[darkcyan]
\draw[-{>>}, rounded corners=10] (crystal ref) to node[above, sloped, pos=.6]{contract} 
	++(3,0) to ++(0,1.7) to (CS);
\draw[-{>>}] (skeleton ref) to node[above, sloped]{contract} (Bruhat);
\end{scope}
}

\caption{Illustration of the relationships between the graphs and characters studied in this paper. 
See \eqref{equation.s F} and \eqref{equation.s YQS} for corresponding character transitions.
\label{figure.summary}}
\end{figure}

Passing from the Young quasisymmetric Schur expansion to the Schur expansion is straightforward; thus understanding the quasicrystal skeleton 
decomposition of the crystal skeleton is crucial in finding Schur expansions. In this paper, we characterize which edges in the crystal skeleton move 
between quasicrystal skeleton components (see Theorem~\ref{theorem.component change}).

We then illustrate how these tools can be applied to symmetric functions by analyzing the Stanley symmetric functions for any permutation.
Related work on Stanley symmetric functions indexed by long permutations comes from the study of tubing lattices by Dahlberg and
Fishel~\cite{DF.2024}. It uses the projection from the weak order to the tubing lattice by Barnard and McConville~\cite{BM.2021},
where the tubing lattice is an orientation of the 1-skeleton of a graph associahedron which is a specialization of the 
graph permutahedron of Postnikov~\cite{Postnikov.2009}. Our approach using the crystal skeleton works for any permutation, not just the 
long permutation.

In~\cite{BGNST.2025}, a geometric interpretation of quasisymmetric functions in terms of quasisymmetric flag varieties is given.
Our work can be viewed as a complementary approach that provides a crystal-theoretic interpretation of quasisymmetric functions.

The paper is organized as follows. In \S\ref{section.CS}, we give the background on crystal skeletons. In \S\ref{section.QS}, we review
the combinatorics of Young quasisymmetric Schur functions. In \S\ref{section.QCS}, we present our main results: we introduce quasicrystal skeletons
and characterize crystal skeleton edges that move between components (see Theorem~\ref{theorem.component change}). We also study the connectivity 
of the quasicrystal skeleton components (see Theorem~\ref{theorem.connectivity}). In \S\ref{section.stanley}, we apply our new methods to the 
combinatorics of Stanley symmetric functions.

\subsection*{Acknowledgements}
This material is based on work supported by the National Science Foundation under Grant No. DMS-1929284, while the authors
were in residence at the ICERM semester program ``Categorification and Computation in Algebraic Combinatorics'' in Fall 2025.
We thank ICERM for the stimulating research atmosphere.

We also thank Susanna Fishel, \'{A}lvaro Guti\'{e}rrez, Zach Hamaker, Hsin-Chieh Liao, Olya Mandelshtam, Joseph Pappe, Benjamin Young, 
Tianyi Yu, and Mike Zabrocki for fruitful discussions. 
Further, S.\ Brauner was partially supported by NSF MSPRF DMS-2303060; S.\ Mason was partially supported by AMS-Simons PUI grant; 
and A.\ Schilling was partially supported by  Simons Foundation grant MPS-TSM-00007191.
 
\section{Crystal skeletons}
\label{section.CS}

We first review some properties of crystals, quasicrystals, and crystal skeletons.

\subsection{Crystal}
\label{ss.crystal}
Let $\lambda$ be a partition with at most $n$ parts. An \defn{$\mathfrak{sl}_n$-crystal} $B(\lambda)$ is a graph with vertices labeled by 
semistandard Young tableaux $b \in \mathsf{SSYT}_n(\lambda)$ of shape $\lambda$ over the alphabet $\{1,2,\ldots,n\}$, and edges as follows. 

Let $w=\mathsf{row}(b)$ be the row reading word of $b$.
The \defn{crystal lowering} operator $f_i$ acts on the subword of $w$ containing only the letters $i$ and $i+1$.  Consider each $i+1$ to be a left 
parenthesis and each $i$ to be a right parenthesis and pair them as parentheses.  The subword of unpaired letters is of the form $i^r (i+1)^s$.  On this subword
\begin{equation*}
	f_i(i^r (i+1)^s) = \begin{cases} i^{r-1} (i+1)^{s+1} & \text{if $r>0$,}\\
	0 & \text{else.}
	\end{cases}
\end{equation*}
All other letters in $w$ remain unchanged; this extends to an operation on semistandard Young tableaux, see for example~\cite{BumpSchilling.2017} 
for more details. There is an edge $b \stackrel{i}{\longrightarrow} b'$  in $B(\lambda)$ whenever $f_i(b)=b'$. See Figure~\ref{figure.B121} for the crystal on 
semistandard Young tableaux of shape $(2,1)$.

There is also the action of the symmetric group on words and tableaux by acting as
\begin{equation}
\label{equation.si}
	s_i(i^r (i+1)^s) = i^s (i+1)^r
\end{equation}
on the subword of unpaired letters and analogously on tableaux.

The \defn{weight} $\mathsf{wt}(b)$ of $b \in \mathsf{SSYT}_n(\lambda)$ is the tuple $(\alpha_1,\alpha_2,\ldots,\alpha_n)$ with $\alpha_j$ 
the number of occurrences of $j$ in $b$. The \defn{character} of the $\mathfrak{sl}_n$-crystal $B(\lambda)$ is the \defn{Schur polynomial}
\[
	 \mathsf{char} B(\lambda) = \sum_{b \in \mathsf{SSYT}_n(\lambda)} x^{\mathsf{wt}(b)} = s_\lambda,
\]
where $x^\alpha = x_1^{\alpha_1}\cdots x_n^{\alpha_n}$.

The \defn{standardization} $\mathsf{std}(b)$ is obtained from $b$ and $\mathsf{wt}(b)$ by replacing the letters 
$i$ in $b$ from left to right by $\alpha_1+\alpha_2+\cdots+\alpha_{i-1}+1,\ldots,\alpha_1+\alpha_2+\cdots+\alpha_i$ for all $1\leqslant i \leqslant n$. 
Then $\mathsf{std}(b)\in \mathsf{SYT}(\lambda)$, the set of standard Young tableaux of shape $\lambda$.

\begin{figure}[t]
\centering
\begin{tabular}{c@{\qquad}c}
 Crystal: & 
 Crystal skeleton:\\[5pt]
\includegraphics{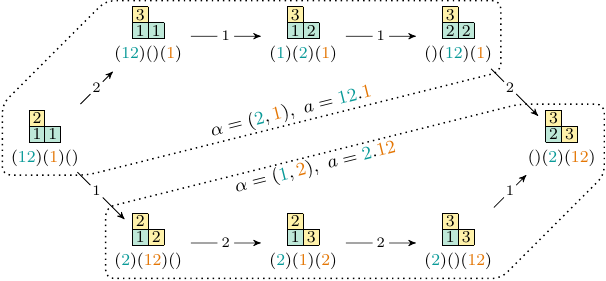}
&
\includegraphics{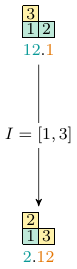}
\end{tabular}
\caption{The $\mathfrak{sl}_3$-crystals $B(2,1) \cong B_{s_1s_2s_1}$ of \S\ref{ss.crystal} and~\S\ref{section.crystal Fw} 
and the corresponding crystal skeletons of~\S\ref{ss.CS} and~\S\ref{ss:CS Fw}.
\label{figure.B121}}
\end{figure}

\subsection{Quasicrystal}
\label{section.quasicrystal}
For a given $\mathfrak{sl}_n$-crystal $B(\lambda)$, the \defn{quasicrystal} $Q_T$ associated to $T \in \mathsf{SYT}(\lambda)$  
is the subgraph of $B(\lambda)$ obtained by restricting to the elements $b\in B(\lambda)$ such that $\mathsf{std}(b) = T$.

\begin{theorem}[\cite{MG.2023}, Theorem 1]
\label{theorem.QC connected}
For $T\in \mathsf{SYT}(\lambda)$, the quasicrystal $Q_T$ is connected in $B(\lambda)$. 
\end{theorem}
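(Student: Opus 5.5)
Fix $T\in\mathsf{SYT}(\lambda)$ with descent set $D=\mathsf{Des}(T)\subseteq[|\lambda|-1]$. An element $b$ with $\mathsf{std}(b)=T$ is determined by its weight $\mathsf{wt}(b)$. The weights that occur are exactly those $(\alpha_1,\ldots,\alpha_n)$ whose partial sums $\{\alpha_1,\alpha_1+\alpha_2,\ldots\}\cap[|\lambda|-1]$ contain $D$. Equivalently, $b$ is obtained from $T$ by choosing a weakly increasing surjection $\phi$ from $[|\lambda|]$ onto an interval-compatible labeling, with $\phi(j)<\phi(j+1)$ whenever $j\in D$. So $Q_T$ is in bijection with the weakly increasing maps $\phi:[|\lambda|]\to[n]$ that strictly increase at descents of $T$, via $b=\phi(T)$. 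This is the standard $F_{\mathsf{Des}(T)}$ monomial description.

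The plan is to show that any such $b$ is connected inside $Q_T$ to a canonical element, namely the lowest-letter filling $b_0$: the minimal $\phi$, which jumps by exactly one at descents and stays constant otherwise. The key step is a local move. Suppose $\phi\neq\phi_0$, and take the smallest letter $i+1$ appearing in $b$ such that $i$ is not used, or is used in a way allowing compression. Concretely, pick $i$ such that the block $\phi^{-1}(i+1)$ can be lowered by one, meaning either $i$ is absent from $b$, or the last entry $j$ with $\phi(j)=i$ satisfies $j\notin D$. Then merge the first part of the $(i+1)$-block into $i$.

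I will check that this move is realized by a crystal raising operator $e_i$ (an edge $f_i$ in reverse). The letters $i$ and $i+1$ in $b$ correspond to consecutive standard entries $\phi^{-1}(i)\cup\phi^{-1}(i+1)$ of $T$. Since $\mathsf{std}(b)=T$, the entries of $T$ in a block with no internal descent form a horizontal strip read left to right. The row reading word restricted to $\{i,i+1\}$ therefore has all $i$'s paired or unpaired in a controlled way, and $e_i$ changes the leftmost unpaired $i+1$. The point to verify is the following: if the $i$'s and $(i+1)$'s together form a horizontal strip (no descent at the boundary $j$), then no $i+1$ lies above an $i$ in the reading order in a way that creates pairs. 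In that case every letter is unpaired, so $e_i$ and $f_i$ just shift the boundary by one cell. They turn the leftmost $i+1$ into $i$ or the rightmost $i$ into $i+1$, and both preserve standardization. Iterating $e_i$ then moves $b$ toward the minimal weight.

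Hence every vertex of $Q_T$ connects through $Q_T$ to the unique element of minimal $\phi$, and $Q_T$ is connected. The main obstacle is the pairing analysis: showing that when $\phi^{-1}(\{i,i+1\})$ contains no descent of $T$, the word in $i,i+1$ is of the form $i^r(i+1)^s$ with no bracketing. I expect this to follow from the fact that in a horizontal strip, successive standard entries move weakly right and weakly down (in English convention), so all $i$'s precede all $(i+1)$'s in the row reading word of each row, and no $i+1$ sits in a lower row left of an $i$.

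The case where $i$ is absent but $j\in D$ would not allow the move. I need to handle it by instead choosing a different $i$: whenever $\phi\neq\phi_0$, some gap exists where a block can be pushed down without crossing a descent. This follows by comparing $\phi$ with $\phi_0$ at the first index where they differ.
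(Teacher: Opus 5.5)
The paper gives no proof of this statement; it quotes it from \cite{MG.2023}. Your argument therefore has to stand on its own, and in outline it does. You identify $Q_T$ with the weakly increasing maps $\phi\colon[N]\to[n]$ ($N=|\lambda|$) that increase strictly at $D$. Edges inside $Q_T$ change one value of $\phi$ by one, and every $\phi$ can be lowered to the minimal $\phi_0$.

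However, your justification of the key pairing step has the orientation wrong. In the paper's French convention, $\mathsf{row}(b)$ reads rows from the top down. If $j\notin D$, then $j+1$ lies strictly right of and weakly \emph{below} $j$; in English this is ``weakly up'', not ``weakly down''. Hence on a descent-free run $\phi^{-1}(i)\cup\phi^{-1}(i+1)$, every $i$ of $b$ lies weakly above every $i+1$. So $\mathsf{row}(b)|_{\{i,i+1\}}=i^r(i+1)^s$, with nothing bracketed. The configuration you propose to exclude, an $i+1$ in a lower row to the left of an $i$, is irrelevant: a bracket would require an $i+1$ strictly \emph{above} an $i$. The same fact shows that the first $i+1$ read is the cell of $t=\min\phi^{-1}(i+1)$, so $e_i$ lowers exactly $\phi(t)$. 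Alternatively, Proposition~\ref{proposition.quasi edges} gives $e_i(b)\in Q_T$ as soon as nothing is bracketed.

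Two smaller repairs are needed. First, the ``case where $i$ is absent but $j\in D$'' does not exist. If $i$ does not occur in $b$, lowering $\phi(t)$ is always legal, because the preceding value is at most $i-1$. The only obstruction is that $i$ occurs and $\max\phi^{-1}(i)\in D$.

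Second, the first-difference claim drives the induction, so it should be written out:
\begin{itemize}
\item $\phi\geqslant\phi_0$ pointwise, since $\phi(j+1)-\phi(j)\geqslant\phi_0(j+1)-\phi_0(j)$ for all $j$ (the right side is $1$ if $j\in D$ and $0$ otherwise).
\item Take the first $t$ with $\phi(t)>\phi_0(t)$. If $t=1$ or $\phi(t-1)\leqslant\phi(t)-2$, then $i:=\phi(t)-1$ is absent.
\item Otherwise $\phi(t)=\phi(t-1)+1=\phi_0(t-1)+1>\phi_0(t)$. This forces $\phi_0(t)=\phi_0(t-1)$, i.e.\ $t-1\notin D$.
\end{itemize}
In either case $\phi^{-1}(i)\cup\phi^{-1}(i+1)$ is descent-free, so lowering $\phi(t)$ is a legal $e_i$-step inside $Q_T$. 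Each step decreases $\sum_j\phi(j)$ by one, so the process terminates, and it can only stop at $\phi_0$.
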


For $T \in \mathsf{SYT}(\lambda)$, the letter $i$ is a \defn{descent} if the letter $i+1$ is in a higher row of the tableau 
(in French notation). Denote the descents of $T$ by $d_1<d_2<\cdots<d_k$. The \defn{descent composition} is defined as
$\mathsf{Des}(T) = ( d_1, d_2 - d_1, \ldots, d_k - d_{k-1}, |\lambda|-d_k)$, where $|\lambda|$ is the size of $\lambda$.

In fact~\cite{MG.2023}, two quasicrystals $Q_T$ and $Q_{T'}$ are isomorphic if $\mathsf{Des}(T)=\mathsf{Des}(T')$. Hence quasicrystals
can be indexed by compositions $\alpha$, that is $Q_\alpha := Q_T$ if $\alpha=\mathsf{Des}(T)$.

The edges $f_i$ that change quasicrystal components in $B(\lambda)$ were characterized 
in~\cite{CMRR.2023}.

\begin{proposition}[\cite{CMRR.2023}, Section 2.5.2]
\label{proposition.quasi edges}
Let $b'=f_i(b)$ in $B(\lambda)$ and $b\in Q_T$, $b'\in Q_{T'}$ for $T,T'\in \mathsf{SYT}(\lambda)$.
Then $T\neq T'$ if and only if some $i+1$ is paired with some $i$ in $\mathsf{row}(b)$.
\end{proposition}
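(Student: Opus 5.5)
The plan is to compare $\mathsf{std}(b)$ and $\mathsf{std}(b')$ directly, where $b'=f_i(b)$. Only one letter changes: the rightmost unpaired $i$ of $w=\mathsf{row}(b)$ becomes an $i+1$. Write the $i$-letters and $(i+1)$-letters of $w$ as the subword $u$, and let $a=\alpha_1+\cdots+\alpha_{i-1}$ with $\alpha=\mathsf{wt}(b)$. All letters other than $i$ and $i+1$ keep their standardized values, since the number of letters below $i$ and above $i+1$ is unchanged. So $T=T'$ exactly when the standardized labels on the cells occupied by $u$ coincide. In $b$, the $i$'s (read left to right in $w$) receive the labels $a+1,\dots,a+\alpha_i$, and the $(i+1)$'s receive $a+\alpha_i+1,\dots,a+\alpha_i+\alpha_{i+1}$. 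In $b'$ the same happens with $\alpha_i$ decreased by one. Hence $T=T'$ iff the changed cell, the rightmost unpaired $i$ in $u$, gets the same label $a+\alpha_i$ both times. It does so iff the labeling order on $u$ is unchanged, which means in $u$ every $i$ lies to the left of every $i+1$ except that this cell may sit at the junction.

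Concretely, I would show that $T=T'$ iff $u$ has the form $i^{r}(i+1)^s$, i.e.\ $u$ is already entirely unpaired. In that case the changed cell is the last $i$, so $u$ becomes $i^{r-1}(i+1)^{s+1}$ and the same cell carries label $a+r$ in both tableaux; thus $T=T'$.

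Conversely, suppose some $i+1$ precedes some $i$ in $u$. This is equivalent to some pair existing, because the first $i+1$ followed later by an $i$ yields a bracket pair. Let $c$ be the changed cell, the rightmost unpaired $i$, and let $p$ be the number of $i$'s to the left of $c$ in $w$, so $c$ has label $a+p+1$ in $T$. In $T'$, the cell $c$ holds an $i+1$, and its label is $a+(\alpha_i-1)+q+1$, where $q$ is the number of $(i+1)$'s to its left. Equality would force $p+1=\alpha_i+q$. Since $p\le \alpha_i-1$, this requires $p=\alpha_i-1$ and $q=0$: $c$ is the last $i$ and no $i+1$ precedes it. Being unpaired with no $i+1$ before it is consistent, so I must also handle the $i$'s and $(i+1)$'s other than $c$. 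Every $i$ other than $c$ is left of $c$, and hence left of every $i+1$ (as $q=0$). So if $c$ is the last $i$ and $q=0$, the word is $i^{r}(i+1)^s$ and there are no pairs.

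Otherwise the label of $c$ itself differs, so $T\ne T'$. Note that if the labels of $c$ differ then $T\neq T'$ immediately, because $\mathsf{std}$ records labels cell by cell. The only delicate point is the bookkeeping identifying the changed cell, together with checking that ``some $i+1$ before some $i$'' is equivalent to ``some pairing occurs''. That equivalence holds because the bracket rule pairs any $i+1$ with a later unmatched $i$, and the unpaired subword is always of the form $i^r(i+1)^s$.
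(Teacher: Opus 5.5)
The paper gives no proof of this proposition. It is quoted from \cite{CMRR.2023}, so there is no internal argument to compare against. Your proposal is a correct, self-contained proof that uses only the definitions of $f_i$ and $\mathsf{std}$ from \S\ref{ss.crystal}. Cells outside the $\{i,i+1\}$-subword $u$ keep their labels. The changed cell $c$ gets label $a+p+1$ in $T$ and $a+\alpha_i+q$ in $T'$, and equality forces $p=\alpha_i-1$ and $q=0$, i.e.\ $u=i^{r}(i+1)^{s}$, which is exactly the case with no bracket pair. The citation buys brevity and a link to the quasicrystal literature. Your route buys an elementary verification, and it shows that the label of the single changed cell already detects $T\neq T'$.

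When writing it up, tighten three points.

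First, $\mathsf{std}$ labels equal letters left to right in the tableau, whereas you label them left to right in $w=\mathsf{row}(b)$. These orders agree because equal entries of a semistandard tableau form a horizontal strip. Of two equal entries, the one in the higher row lies strictly to the left and is read first in the row reading word, since rows are read top to bottom in French notation.

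Second, in the no-pairing direction you check only the label of $c$. Add that every other cell of $u$ keeps its label too. This is immediate: in both $i^{r}(i+1)^{s}$ and $i^{r-1}(i+1)^{s+1}$, the $j$-th letter of the subword receives label $a+j$.

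Third, two sentences are inaccurate, though your argument does not use them. The cell $c$ does not in general receive label $a+\alpha_i$ in $T$; it receives $a+p+1$. Also, ``the first $i+1$ followed later by an $i$ yields a bracket pair'' is false as stated: in $(i+1)(i+1)\,i$ the first $i+1$ is unpaired. The equivalence you need follows cleanly from your closing remark. If nothing is paired, $u$ equals its unpaired subword $i^{r}(i+1)^{s}$. Conversely, any bracket pair consists of an $i+1$ preceding an $i$.
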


The \defn{character} of the quasicrystal $Q_\alpha$ is \defn{Gessel's fundamental quasisymmetric function}~\cite{Gessel.1984}
\[
	\mathsf{char} Q_\alpha = F_\alpha = \sum_{b\in Q_\alpha} x^{\mathsf{wt}(b)}.
\]
Gessel's fundamental quasisymmetric functions $F_\alpha$ form a basis of the vector space $\QSym$ of quasisymmetric functions.  See
for example~\cite{LMvW.2013} for more details.

By construction, the quasicrystals tile the crystal. More precisely, for $n \geqslant |\lambda|$, we have the disjoint union
\[
	B(\lambda) = \bigcup_{T\in \mathsf{SYT}(\lambda)} Q_T.
\]
This is the crystal analogue of~\eqref{equation.s F} as indicated in Figure~\ref{figure.summary}.

\subsection{Crystal skeleton}
\label{ss.CS}
Maas-Gari\'epy~\cite{MG.2023} defined the \defn{crystal skeleton} by contracting the quasicrystal components in the crystal. Since each
quasicrystal component in $B(\lambda)$ contains a unique standard Young tableau $T\in \mathsf{SYT}(\lambda)$, it is natural to index
the vertices of the crystal skeleton $\mathsf{CS}(\lambda)$ by $T \in \mathsf{SYT}(\lambda)$. There is an edge between $T$ and $T'$ in 
$\CS(\lambda)$ if and only if there is an edge between some $b \in Q_T$ and $b' \in Q_{T'}$ in $B(\lambda)$. In~\cite{BCDS.2025}, the 
combinatorics of the crystal skeleton is described, without reference to the original crystal, as follows.

For a permutation $\pi$ and an interval $I$, let $\pi|_I$ be the subword of $\pi$ restricted to the letters in $I$. 

\begin{definition}
\label{definition.Dyck pattern}
Let $T \in \mathsf{SYT}(\lambda)$ with $n=|\lambda|$ and $\pi = \mathsf{row}(T)$.
The interval $I=[i,i+2m] \subseteq [n]$ with $m\geqslant 1$ is a \defn{Dyck pattern interval} for $T $ (resp. $\pi$) if the RSK insertion tableaux of 
$\pi|_I$ and $\pi|_{[i,i+m]}$ have shape $(m+1,m)$ and $(m+1)$, respectively. 
\end{definition}

\begin{example}
The tableau
\[T = \TIKZ[scale=.35]{
\Tableau{{1,3,4},{2,6},{5}}
}\]
has two Dyck pattern intervals: 
$I_1 = [3,5]$ and $I_2 = [2,6]$.
\end{example}

\begin{remark}
\label{remark.outpacing}
Given $T\in \mathsf{SYT}(\lambda)$ and $I=[i,i+2m]$ a Dyck pattern interval of $T$, define the intervals $I^-:=[i,i+m]$ and $I^+:=[i+m+1,i+2m]$, so that 
$I= I^- \cup I^+$. Observe the following:
\begin{enumerate}
\item The condition in Definition~\ref{definition.Dyck pattern} that the RSK insertion tableau of $\pi|_{I^-}$ has shape $(m+1)$ is equivalent to
the statement that the letters in $I^-$ in $T$ form a horizontal strip, meaning there is at most one letter in each column, and the letters in $I^-$ appear in
increasing order left to right. Similarly, the letters in $I^+$ in $T$ also form a horizontal strip and they appear in increasing order left to right.
\item
Furthermore, scanning the columns of $T$ from left to right, there are always weakly more letters from $I^+$ than from $I^-$ due to the fact that
the RSK insertion tableau of $\pi|_I$ has shape $(m+1,m)$. 
\end{enumerate}
\end{remark}

\begin{definition}\label{def:cycle}
With the notation in Definition~\ref{definition.Dyck pattern} and Remark~\ref{remark.outpacing}, we make the following definitions:
\begin{enumerate}
\item 
Associate to every letter in $I^-$ (resp. $I^+$) in $\pi|_I$ a closed (resp. open) bracket.
Since $|I^-|=|I^+|+1$ and by the definition of a Dyck pattern interval, there is precisely one unpaired closed bracket. Let $k$ be the letter in $\pi|_I$ 
associated to the unpaired closed bracket. Note that $k\in I^-$. 
\item
We define $I \cdot T := \cycle(I) \cdot T$, where
\[ 
	\cycle(I)=(m+k,m+k-1,\ldots,k)
\]
is a cycle of length $m+1$ and the action $\cycle(I) \cdot T$ is on the letters in $T$. 
\end{enumerate}
\end{definition}

See~\cite[Section 3.2.2]{BCDS.2025} for further details.

\begin{example}
The interval $I=[6,12]$ is a Dyck pattern interval in the following tableau
\[
	T=\TIKZ[scale=0.35]{
	\filldraw[kstrip!20] (0,2) rectangle ++(1,1);
	\filldraw[kstrip!20] (2,1) rectangle ++(1,1);
	\filldraw[kstrip!20] (3,0) rectangle ++(1,1);
	\filldraw[kstrip!20] (4,0) rectangle ++(1,1);
	\filldraw[kstrip!40] (0,3) rectangle ++(1,1);
	\filldraw[kstrip!40] (3,1) rectangle ++(1,1);
	\filldraw[kstrip!40] (4,1) rectangle ++(1,1);
	\Tableau[\scriptsize]{{1,2,4,8,9},{3,5,7,11,12},{6,13},{10}})}\;.
\]
The letters in $I^-=[6,9]$ shaded in light pink form a horizontal strip as do the letters in $I^+=[10,12]$ shaded in dark pink.
For each column $c$ there are weakly more letters in $I^-$ than in $I^+$ in the columns from $1$ up to $c$. The unpaired letter is 
$k=7$.
\end{example}

\begin{definition}
Let $\lambda \vdash n$ be a partition of $n$. The \defn{crystal skeleton} $\mathsf{CS}(\lambda)$ is the graph with vertices given by
$T \in \mathsf{SYT}(\lambda)$ and an edge $T \stackrel{~I~}{\longrightarrow} T'$  for every Dyck pattern interval $I$ of $T$, where $T'=I \cdot T$.
\end{definition}

\begin{example}
The crystal skeleton $\mathsf{CS}(3,2,1)$ is given in Figure~\ref{figure.CS321}.
\end{example}

\begin{figure}[!h]
\centering
\scalebox{1}{
\includegraphics{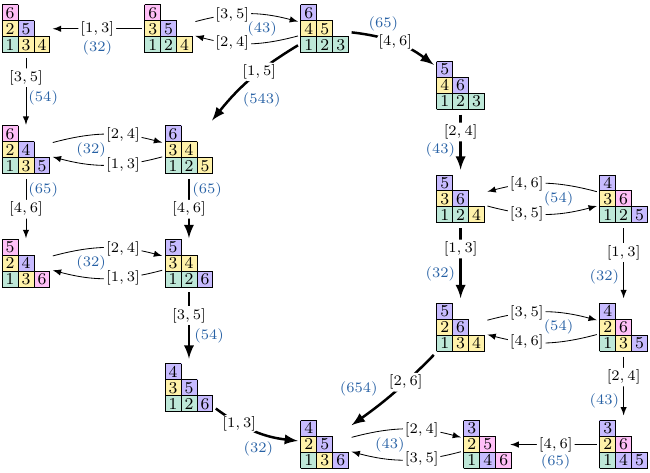}
}
\caption{The crystal skeleton $\mathsf{CS}(3,2,1)$. Edges are labeled by Dyck pattern intervals and cycles; vertices are labeled by standard Young
tableaux in $\SYT(3,2,1)$, and the color indicates the descent composition of each tableau. The $\mathfrak{sl}_3$-subcrystal $B(3,2,1)$ appears with 
bold edges.
\label{figure.CS321}}
\end{figure}

Many intriguing properties of the crystal skeleton were shown in~\cite{BCDS.2025}: 
\begin{enumerate}
\item \defn{Branching property}: Deleting all edges with intervals containing $n$ yields
\begin{equation}
\label{equation.branching}
	\CS(\lambda)_{[1, n-1]} \cong \bigcup_{\mu \in \lambda^-} \mathsf{CS}(\mu),
\end{equation}
where $\lambda^-$ is the set of partitions obtained from $\lambda$ by removing a corner cell.
\item \defn{Subcrystal property}:
The induced subgraph of $\mathsf{CS}(\lambda)$ obtained by including $T\in \mathsf{SYT}(\lambda)$
if the length of $\mathsf{Des}(T)$ is $\ell:=\ell(\lambda)$ is isomorphic to the $\mathfrak{sl}_\ell$-crystal $B(\lambda)$.
\item \defn{Lusztig involution}: The crystal skeleton is invariant under the Lusztig or Sch\"utzenberger involution replacing $T\in \mathsf{SYT}(\lambda)$ by 
its image $\eta(T)$ under the involution and $I=[i,i+2m]$ by $I' = [n+1-i-2m,n+1-i]$.
\item \defn{Promotion}: The graphs obtained by restricting $\mathsf{CS}(\lambda)$ to $[1,n-1]$ and $[2,n]$ are isomorphic related by the promotion
operator.
\item \defn{Dual equivalence subgraph}: The dual equivalence graph is an (undirected) subgraph of $\mathsf{CS}(\lambda)$ by restricting to edges 
labeled by intervals $I$ of length 3.
\end{enumerate}

\section{Quasisymmetric Schur functions}
\label{section.QS}

In this section, we introduce the \defn{Young quasisymmetric Schur functions}, which will help us decompose the crystal skeleton. 
To do so, we first introduce the closely related \defn{quasisymmetric Schur functions}.

The quasisymmetric Schur functions arose from the theory of nonsymmetric Macdonald polynomials, and are closely related to the 
\defn{Young quasisymmetric Schur functions}~\cite{HLMvW.2011,LMvW.2013}. The following material can be found in~\cite{LMvW.2013}. 
Specializing the nonsymmetric Macdonald polynomials to $q=t=0$ produces the \defn{Demazure atoms} $\mathcal{A}_{\gamma}$~\cite{Mas09}.  
The \defn{quasisymmetric Schur functions} can be defined by
\[
	\QS_{\alpha} = \sum_{\gamma} \mathcal{A}_{\gamma},
\]
where the sum is over all weak compositions $\gamma$ such that when the zeros are removed from $\gamma$ the resulting composition is $\alpha$.  
For example, $\QS_{12} = \mathcal{A}_{120}+\mathcal{A}_{102}+ \mathcal{A}_{012}$.  They can also be described combinatorially in terms of tableaux 
(arising from the combinatorial formula for Macdonald polynomials). 

Applying the Lusztig involution to the quasisymmetric Schur functions (with appropriate re-indexing and re-labeling of the variables) produces the 
\defn{Young quasisymmetric Schur functions}, which can also be described combinatorially as fillings of composition diagrams; this is described below. 
The Young quasisymmetric Schur functions are the characters of the quasicrystal skeletons which we will introduce in~\S\ref{section.QCS}.

We now give the combinatorial definition of the  Young quasisymmetric Schur functions.  In the following, if a cell is empty, we think of it as filled with 
the entry infinity, hence any empty cell is considered to be larger than any non-empty cell.

\begin{definition}
\label{def:SSYCT}
A \defn{semistandard Young composition tableau} of shape $\alpha$ is a filling $T$ of the cells of the composition diagram 
(in French notation) of $\alpha$ satisfying the following conditions:
\begin{enumerate}
\item The entries in each row are weakly increasing when read from left to right.
\item The entries in the leftmost column are strictly increasing when read from bottom to top.
\item If $r'>r$ with $(r,c+1)$ in the diagram of $\alpha$ and $T(r,c+1) \geqslant T(r',c)$, then $T(r,c+1) >T(r',c+1)$.
\end{enumerate}
The set of semistandard Young composition tableaux of shape $\alpha$ is denoted by $\mathsf{SSYCT}(\alpha)$.
\end{definition}

A \defn{standard Young composition tableau} (SYCT) of shape $\alpha$ with $|\alpha|=n$ is a semistandard Young composition tableau in which 
each entry from $[n]$ appears exactly once. We denote the set of standard Young composition tableaux of shape $\alpha$ by
$\mathsf{SYCT}(\alpha)$. 

\begin{example}
Examples of standard Young composition tableaux are the vertices in the graph in Figure~\ref{figure.QCS321}.
\end{example}

The third condition in Definition~\ref{def:SSYCT} can be interpreted as a \defn{triple condition}, depicted as follows, where $x=T(r',c)$, $y=T(r',c+1)$,
and $z=T(r,c+1)$:
\begin{equation}
\label{equation.triple}
\begin{matrix}
\text{Semistandard tableaux:} &\qquad & \text{Standard tableaux:}\\[3pt]
 \TIKZ[scale=.5]{
	\draw (0,2.5) rectangle +(2,1);
		\draw (1,2.5) to +(0,1);
		\node at (0.5, 3) {$x$\strut};
		\node at (1.5, 3) {$y$\strut}; 
	\draw (1,0) rectangle +(1,1);
    		\node at (1.5, .5) {$z$\strut};  
	\draw (1.5, 1) to 
		node[fill=white, sloped, inner sep = .5pt]{\footnotesize$\leqslant$} 
		+(-1, 1.5);
	\draw[densely dashed] (1.5, 1) to
    +(0, 1.5);
}&&
 \TIKZ[scale=.5]{
	\draw (0,2.5) rectangle +(2,1);
		\draw (1,2.5) to +(0,1);
		\node at (0.5, 3) {$x$\strut};
		\node at (1.5, 3) {$y$\strut}; 
	\draw (1,0) rectangle +(1,1);
    		\node at (1.5, .5) {$z$\strut};  
	\draw (1.5, 1) to 
		 node[fill=white, sloped, inner sep = .5pt]{\footnotesize$<$} 
		+(-1, 1.5);
	\draw[densely dashed] (1.5, 1) to
        +(0, 1.5);
}\\
\text{If $z \geqslant x$, then $z>y$.} && \text{If $z>x$, then $z>y$.}
\end{matrix}
\end{equation}
In particular, the condition ensures that as long as the cells containing $z$ and $x$ exist, the cell containing $y$ must also exist. 
  
Standard Young composition tableaux are in bijection with saturated chains in the 
\defn{Young composition poset}~\cite{LMvW.2013} whose elements are compositions such that $\alpha=(\alpha_1,\alpha_2, \hdots , \alpha_{\ell})$ is covered by
\begin{enumerate}
\item $(\alpha_1,\alpha_2, \hdots , \alpha_{\ell},1)$
\item $(\alpha_1, \alpha_2, \hdots , \alpha_i+1, \hdots , \alpha_{\ell})$ provided $\alpha_j \not= \alpha_i$ for all $j>i$.
\end{enumerate}

In light of the covering relations described above, we now describe which cells in a Young composition tableau of shape $\alpha$ (with $|\alpha|=n$) 
can contain the entry $n$.

\begin{definition}
Let $\alpha$ be a composition of $n$.  We call the cell $c$ in the diagram of $\alpha$ a \defn{removable cell} in the diagram of $\alpha$ if there exists a 
Young composition tableau with entry $n$ in cell $c$.
\end{definition}

The covering relations in the Young composition poset imply that a cell $(r,c)$ is a removable cell of $\alpha=(\alpha_1, \alpha_2, \hdots , \alpha_{\ell})$ 
if and only if either ($c=1$ and $r=\ell$) or ($c=\alpha_r \not=1$ and $\alpha_i \not= \alpha_r-1$ for all $i > r$).

\begin{definition}
The \defn{superstandard filling} of a composition shape $\alpha$ (where $| \alpha| = n$) is the filling obtained by placing the entries from $[n]$ 
consecutively into the cells of $\alpha$ starting from the bottom and moving left to right along rows, going from bottom row to top row.
\end{definition}

\begin{lemma}
\label{lem:superstandard}
The superstandard filling of a composition $\alpha$ is always a Young composition tableau of shape $\alpha$.
\end{lemma}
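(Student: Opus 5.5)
We check the three conditions of Definition~\ref{def:SSYCT} directly for the superstandard filling $S$ of $\alpha=(\alpha_1,\ldots,\alpha_\ell)$. Since the entries are placed consecutively along rows, bottom row first, every entry of row $r$ is strictly smaller than every entry of row $r'$ whenever $r<r'$. Concretely,
\[
S(r,c)=\alpha_1+\cdots+\alpha_{r-1}+c .
\]
Each entry of $[n]$ appears exactly once, so $S$ is a standard filling. It remains to verify the row, first-column and triple conditions.

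Conditions (1) and (2) are immediate. Within a row the entries are consecutive integers placed from left to right, so they increase. In the leftmost column we have $S(r,1)<S(r',1)$ for $r<r'$, because all of row $r$ precedes all of row $r'$; hence the first column strictly increases from bottom to top.

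For the triple condition (3), take $r'>r$ with $(r,c+1)$ in the diagram. The hypothesis of the condition asks whether $z=S(r,c+1)\geqslant x=S(r',c)$. If the cell $(r',c)$ exists, it lies in a higher row than $(r,c+1)$, so $S(r',c)>S(r,c+1)$ and the hypothesis fails. If $(r',c)$ does not exist, it is regarded as holding $\infty$, and again the hypothesis fails. In either case the implication holds vacuously. This also covers the cases where $(r',c+1)$ is empty, because the hypothesis never holds.

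No step is a real obstacle. The one point needing care is the convention that empty cells hold $\infty$: we must confirm that it never makes the hypothesis $z\geqslant x$ true, and it does not, since $x=\infty$ cannot be exceeded by the finite entry $z$.
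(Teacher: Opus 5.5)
Your proposal is correct and follows the paper's proof: conditions (1) and (2) hold by construction, and the triple condition holds vacuously because $T(r,c+1) < T(r',c)$ whenever $r'>r$. Your extra remark that an empty cell $(r',c)$, read as $\infty$, also makes the hypothesis fail is a detail the paper leaves implicit.
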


\begin{proof}
Conditions 1 and 2 in Definition~\ref{def:SSYCT} are satisfied by construction.  Condition~3 follows from the fact that if $r'>r$ then $T(r,c+1) < T(r',c)$. 
Therefore the superstandard filling of a composition shape is indeed a Young composition tableau.
\end{proof}

\begin{definition}
The \defn{Young quasisymmetric Schur function} $\YQS_{\alpha}$ is the polynomial generated by the set of semistandard Young composition tableaux
$\mathsf{SSYCT}(\alpha)$ of shape $\alpha$
 \[
 	\YQS_{\alpha} = \sum_{T \in \mathsf{SSYCT}(\alpha)} x^{\wt(T)},
\]
where $\wt(T)$ is the weight of $T$.
\end{definition}

The decomposition of the Young quasisymmetric Schur functions into fundamental quasisymmetric functions can be described in terms of descent sets
on standard Young composition tableaux. If $T\in \mathsf{SYCT}(\alpha)$ with $\alpha \models n$, we define the \defn{descent set} as
\[
	\mathsf{des}(T) = \{ i \mid i+1 \textrm{ appears weakly left of } i \} \subseteq [n-1].
\]
From this, we define the \defn{descent composition} $\mathsf{Des}(T)$ as in~\S\ref{section.quasicrystal}. Then
\begin{equation}
\label{equation.YQS F}
	\YQS_{\alpha} = \sum_{\beta} d_{\alpha, \beta} F_{\beta},
\end{equation}
where $d_{\alpha,\beta}$ is the number of $T \in \mathsf{SYCT}(\alpha)$ such that $\mathsf{Des}(T)=\beta$.

\begin{example}
Computing the descent compositions of the standard Young composition tableaux of shape $\alpha=(2,3,1)$ in Figure~\ref{figure.QCS321},
we find
\[
	\YQS_{(2,3,1)} = F_{(1,3,1,1)} + F_{(1,2,2,1)} + F_{(2,3,1)} + F_{(1,2,1,2)} + F_{(2,2,2)}.
\]
\end{example}

We now describe a bijection between standard Young tableaux and Young composition tableaux; this will be essential to our study of the crystal skeleton, 
whose vertices are indexed by standard Young tableaux of a fixed shape $\lambda$.

\begin{definition}\cite[Section 4.3]{LMvW.2013}
\label{def.phi}
Let $\lambda \vdash n$ be a partition of $n$. Define the following bijection
\[
\begin{split}
        \varphi \colon \bigcup_\alpha \mathsf{SYCT}(\alpha) &\to \mathsf{SYT}(\lambda),\\
	\yc{T} &\mapsto T,
\end{split}
\]
where the union is over all compositions $\alpha$ that rearrange $\lambda$. 
\begin{enumerate}
\item
Given $\yc{T} \in \mathsf{SYCT}(\alpha)$, the standard tableau $T=\varphi(\yc{T})$ is obtained by simply arranging all the columns in increasing 
order from bottom to top. 
\item
The inverse map $\varphi^{-1}(T)$ is defined as follows. Start by writing the leftmost column of $T$ in increasing order from bottom to top.  
Then place the entries in the second column of $T$, starting with the smallest and moving to the largest, so that each entry is placed into the 
highest unoccupied cell whose neighbor immediately to the left is smaller. Repeat with all subsequent columns, left to right.
\end{enumerate}
Henceforth, we will set $\yc{T}:= \varphi^{-1}(T)$.
\end{definition}

\begin{example}
The map $\varphi$ is given by mapping the Young composition tableaux in Figure~\ref{figure.QCS321} to the corresponding standard Young tableau 
in Figure~\ref{figure.CS321}.
\end{example}

\begin{remark}
\label{remark.consecutive order}
Since an entry can never be placed to the right of a larger entry, the entries can be placed into the diagram in consecutive order rather 
than column by column.
\end{remark}

The bijection $\varphi$ proves~\eqref{equation.s YQS}. Note that this decomposition and the upper triangularity property of these as bases for $\QSym$ 
together imply that any symmetric function that can be written as a positive sum of Young quasisymmetric Schur functions will automatically be Schur positive.

\section{Quasicrystal skeletons}
\label{section.QCS}

This section introduces the main new object of this paper, the quasicrystal skeleton.
We begin in \S\ref{section.definition QS} with the definition of the quasicrystal skeleton. 
In \S\ref{section.characterization edges}, specifically Theorem~\ref{theorem.component change}, 
we characterize the edges which change components in the quasicrystal skeleton. 
The proof of Theorem~\ref{theorem.component change} is given in \S\ref{section.proof}.
In \S\ref{section.connectedness} we address connectivity of
quasicrystal skeletons. In \S\ref{section.Bruhat}, we conclude by analyzing the contraction of
quasicrystal skeletons to obtain quotients of Bruhat order.

\subsection{Definition of quasicrystal skeleton}
\label{section.definition QS}

\begin{definition}
Let $\lambda \vdash n$ be a partition and let $\alpha$ be a composition which is a rearrangement of the parts of $\lambda$.
We define the \defn{quasicrystal skeleton} $\mathsf{QCS}_\alpha$ as the induced subgraph of $\mathsf{CS}(\lambda)$ consisting of all vertices
$T \in \mathsf{SYT}(\lambda)$ such that $\varphi^{-1}(T)$ has shape $\alpha$.
\end{definition}

\begin{figure}[t]
\centering
\includegraphics{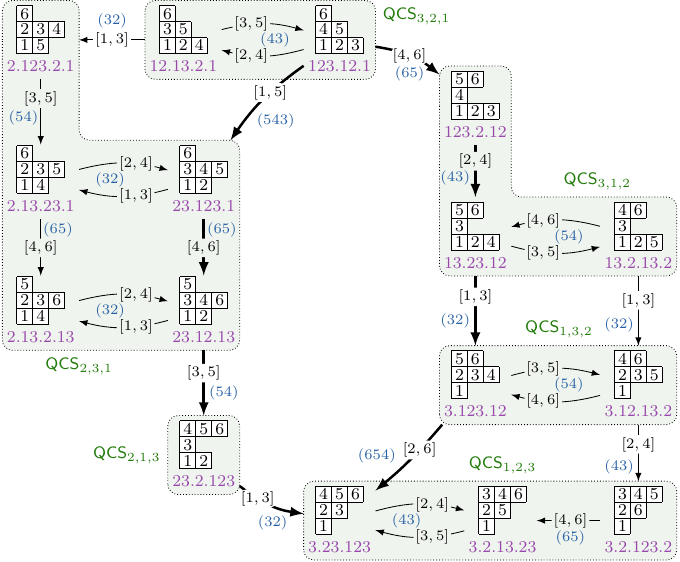}
\caption{The crystal skeleton $\mathsf{CS}(3,2,1)$ partitioned into quasicrystal skeletons, with vertices labeled by two combinatorial models: 
Young composition tableaux (see \S\ref{section.QS}) and reduced words for the permutation $w^{-1} =  s_1s_2s_3s_1s_2s_1$ (see \S\ref{ss:CS Fw}). 
Compare to Figure~\ref{figure.CS321}.
\label{figure.QCS321}}
\end{figure}

\begin{remark}
Since $\varphi$ of Definition~\ref{def.phi} is a bijection, we can either label the vertices of $\mathsf{QCS}_\alpha$ by standard Young tableaux
of shape $\lambda$ or alternatively by standard Young composition tableaux of shape $\alpha$.
\end{remark}

In Figure~\ref{figure.QCS321}, the crystal skeleton $\mathsf{CS}(3,2,1)$ is given with standard Young composition tableaux as vertices. The
quasicrystal skeletons within the crystal skeleton are shaded.

Note that by~\eqref{equation.YQS F} the character of $\mathsf{QCS}_\alpha$ is the Young quasisymmetric Schur function
\[
	\mathsf{char} \mathsf{QCS}_\alpha = \YQS_\alpha.
\]

\subsection{Characterization of quasicrystal skeleton edges}
\label{section.characterization edges}
Our main result characterizes the edges in the crystal skeleton $\CS(\lambda)$ that change quasicrystal skeleton components.

Recall that in the crystal skeleton, edges are labeled by Dyck pattern intervals (see Definition~\ref{definition.Dyck pattern}), and that we use 
$k$ to denote the bracketed letter in the word $\pi|_{I}$ (see Definition~\ref{def:cycle}).

\begin{theorem}
\label{theorem.component change}
Let $T \stackrel{~I~}{\longrightarrow} T'$ where $I=[i,i+2m]$ is an edge in $\mathsf{CS}(\lambda)$ and let 
$\cycle(I) = (m+k,m+k-1,\ldots,k)$ be the cycle such that $T' = \cycle(I) \cdot T$.
Then $T \in \mathsf{QCS}_\alpha$ and $T' \in \mathsf{QCS}_\beta$ with $\alpha \neq \beta$ if and only if
\begin{enumerate}[label=(\Roman*), ref={Condition \Roman*}]
\item \label{condition1} The letters in $A:=[i,k]$ occur in the first  $|A|=k-i+1$ columns of $T$ and $k>i$; and
\item \label{condition2} If $i$ occurs in row $r_0$ of $T$, then $\alpha_{r_0+1}<\alpha_{r_0}$.
\end{enumerate}
\end{theorem}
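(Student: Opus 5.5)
We compare $\yc{T}=\varphi^{-1}(T)$ and $\yc{T'}=\varphi^{-1}(T')$ directly, using the insertion description of $\varphi^{-1}$ together with Remark~\ref{remark.consecutive order}. Since entries may be placed in consecutive order, the partial fillings of $\yc{T}$ and $\yc{T'}$ by $[1,i-1]$ agree. This holds because $\cycle(I)$ only permutes letters of $[k,k+m]\subseteq I$, so every letter smaller than $i$ sits in the same cell of $T$ and $T'$. We therefore analyse only how the letters of $I$, and then the letters larger than $i+2m$, are placed.

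The first step is to describe the cells of $T$ and $T'$. By Remark~\ref{remark.outpacing}, $I^-$ and $I^+$ are horizontal strips. The cycle $(m+k,m+k-1,\ldots,k)$ replaces each letter $j\in[k+1,k+m]$ by $j-1$, and replaces $k$ by $k+m$. In cell terms, the letters $[k+1,k+m]$ slide down one value and $k$ moves to the largest position. We split the letters into groups:
\begin{itemize}
\item $A=[i,k]$, which stays fixed;
\item the part of $[k+1,i+m]$ lying in $I^-$;
\item the block $I^+$;
\item the letter $k$ itself.
\end{itemize}
The unpairedness of $k$ means that everything in $[k+1,i+2m]$ is bracket-matched, with $I^-$ letters after $k$ matched by $I^+$ letters. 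Matched pairs $(a,b)$ with $a\in I^-$, $b\in I^+$ should sit so that $b$ is weakly left of $a$ and in a higher row. We use this to show that the insertion of $[k+1,i+2m]$ into $\yc{T}$ and into $\yc{T'}$ produces the same column contents in the same rows. The only candidate for a difference is the row that receives $k$ versus $k+m$, and the rows receiving the strip containing $i$.

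The second step is the heart of the argument. Under \ref{condition1}, $A$ occupies columns $1,\ldots,|A|$, one letter per column, so in $\yc{T}$ these letters build a row segment: $i$ starts in column~$1$ in row $r_0$, and the consecutive letters of $A$ extend that same row. In $\yc{T'}$, $k$ is replaced by $k+m$, which arrives after the $I^+$ letters have been placed. The row built by $A\setminus\{k\}$ then has length $|A|-1$ instead of $|A|$ at the moment the $I^+$ strip arrives. We show that the $I^+$ strip, which outpaces $I^-$ columnwise, then occupies the row $r_0+1$ (the row above $r_0$ in $\yc{T}$) and the two rows effectively exchange length-roles. Concretely, the shape changes by swapping $\alpha_{r_0}$ and $\alpha_{r_0+1}$ or a related pair, which is nontrivial exactly when $\alpha_{r_0+1}<\alpha_{r_0}$. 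When $\alpha_{r_0+1}\geqslant\alpha_{r_0}$, the rule ``highest unoccupied cell whose left neighbor is smaller'' sends $k+m$ to the same row that $k$ occupied, so the shape is unchanged; this gives the necessity of \ref{condition2}.

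For the necessity of \ref{condition1}, first suppose $k=i$. Then $k$ is the smallest letter of $I$, and replacing it by $k+m$ only relabels within one row segment without altering which row each column cell enters, so the shape is preserved. Next suppose some letter of $A$ lies beyond column $|A|$. Then the chain from $i$ breaks, and $k$ and $k+m$ are both forced into the same row because of a column obstruction. Finally, the letters greater than $i+2m$ are placed identically in both tableaux, since the column sets and row lengths coincide up to the possible swap. To verify this we use the triple condition~\eqref{equation.triple}.

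The main obstacle is the second step: tracking exactly where the $I^+$ strip lands under insertion and proving that \ref{condition2} is precisely the criterion for a shape change. We expect to need an induction on $m$, or on the position of $k$ within $I^-$, using the bracket matching. The case where the subsequent letters might re-route rows needs care, and there we would invoke the triple condition.
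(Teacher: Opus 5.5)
Your proposal has a genuine gap at exactly the step where Condition~II enters, and two claims you rely on there are false.

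\textbf{The two false claims.} You assert that when $\alpha_{r_0+1}\geqslant\alpha_{r_0}$ the letter $k+m$ goes to the row that $k$ occupied. You also assert that the letters larger than $i+2m$ are placed identically in $\yc{T}$ and $\varphi^{-1}(T')$.

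Neither can work, for the following reason.
\begin{itemize}
\item Since $\varphi^{-1}$ may place entries in increasing order, the cell of $k+m$ depends only on the letters smaller than $k+m$.
\item Under Condition~I, the restriction of $\yc{T}$ to $[1,i+2m]$ has row $r_0+1$ of length exactly $c_0-1$, where $c_0=k-i+1$ is the column of $k$. This is strictly shorter than row $r_0$.
\item So only the placement of the letters larger than $i+2m$ can make Condition~II fail, and $k+m$ cannot ``see'' this. In fact, under Condition~I the letter $k+m$ always lands in cell $(r_0+1,c_0)$.
\end{itemize}

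Here is a concrete example. Take $T$ with rows $1\,2\,5\,6$ and $3\,4\,7$ (bottom to top) and $I=[1,3]$. Then $k=2$, $m=1$, and $T'$ has rows $1\,3\,5\,6$ and $2\,4\,7$.
\begin{itemize}
\item $\yc{T}$ has rows $1\,2\,7$ and $3\,4\,5\,6$, so $\alpha=(3,4)$ and Condition~II fails.
\item $\varphi^{-1}(T')$ has rows $1\,4\,7$ and $2\,3\,5\,6$.
\end{itemize}
The shape is indeed unchanged. But $k+m=3$ sits in row $2$, not in the row of $k$. And $4>i+2m$ has moved from row $2$ to row $1$. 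The shape is preserved by a mechanism your sketch does not contain.

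\textbf{The missing idea.} What is missing is an account of how the letters beyond $i+2m$ are rerouted when Condition~I holds. The paper's argument has three steps.
\begin{itemize}
\item By induction over columns, every entry not in $I$ and outside rows $r_0,r_0+1$ keeps its cell. So only rows $r_0$ and $r_0+1$ can trade entries.
\item By the triple condition, once the entry in row $r+1$ of some column is smaller than the entry in row $r$, the same holds in every column further right.
\item Apply this to $I\cdot\yc{T}$ from column $c_0$ on, where $k+m$ in row $r_0+1$ is smaller than the entry in row $r_0$. This forces rows $r_0$ and $r_0+1$ to effectively exchange their contents precisely in columns $c_0,\dots,c_3$, and to be left alone afterwards. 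Here $c_3$ is the last column with $\yc{T}(r_0,c)<\yc{T}(r_0+1,c)$.
\end{itemize}
The shape changes if and only if an empty cell is exchanged with a filled one. That happens if and only if row $r_0+1$ of $\yc{T}$ is shorter than $c_3$, which is exactly $\alpha_{r_0+1}<\alpha_{r_0}$.

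Your proposed ``swap of $\alpha_{r_0}$ and $\alpha_{r_0+1}$'' would be nontrivial whenever $\alpha_{r_0}\neq\alpha_{r_0+1}$. The asymmetry in Condition~II comes entirely from the truncation at $c_3$.

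\textbf{Necessity of Condition~I.} Your argument here is also only heuristic, and needs two ingredients.
\begin{itemize}
\item A characterization: the cell of $k$ moves if and only if $k-1$ and $k+m$ both lie in column $c_0-1$ with $k+m$ above row $r_0$. This is then propagated leftward via the triple condition.
\item A separate check that when the cell of $k$ does not move, the shape on $[1,i+2m]$ is unchanged. Letters of $I$ do trade rows even then, so your claim of ``the same column contents in the same rows'' is not literally true either.
\end{itemize}
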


Theorem~\ref{theorem.component change} is the analogue of Proposition~\ref{proposition.quasi edges} for quasicrystals and is proved in
\S\ref{section.proof}. We refer to Theorem~\ref{theorem.component change}~(I) as \ref{condition1} and
Theorem~\ref{theorem.component change}~(II) as \ref{condition2}. 

\begin{remark}
\label{YCTrows}
\ref{condition1} implies that the first $|A| = k-i+1$ columns in rows $r_0$ and $r_0+1$ in $\varphi^{-1}(T)$ begin with
\[
\TIKZ[xscale=1.25, scale=.7, font=\footnotesize]{
\filldraw[IminColor!10] (0,0) rectangle (5,1);
\filldraw[IpluColor!10] (0,1) rectangle (4,2);
\begin{scope}[thick]
\draw
	(0,0) to (5,0)
	(0,1) to (5,1)
	(0,2) to (4,2);
\foreach \h [count=\x from 0] in {2, 2, 2, 2, 2, 1}{\draw (\x, 0) to (\x,\h);}
\end{scope}
\foreach \c [count=\x from 1] in {i, i\! +\! 1, \cdots, k \sm 1, k}
	{\node[IminColor!70!black] at (\x-.5, .5) {\strut$\c$};}
\node[Bv, IpluColor!70!black] (i+m+1) at (1-.5, 1.5) {};
	\draw[<-,  IpluColor!70!black, bend right=20] (i+m+1) to +(-.3,1) node[left, inner sep=2pt]{$i+m+1$};
\node[Bv, IpluColor!70!black] (i+m+2) at (2-.5, 1.5) {};
	\draw[<-,  IpluColor!70!black] (i+m+2) to +(0,.8) node[above, inner sep=2pt]{$i+m+2$};
\node[IpluColor!70!black] at (3-.5, 1.5) {\strut$\cdots$};
\node[IpluColor!70!black] at (4-.5, 1.5) {\strut$k\! +\! m$};
\node[right] at (4, 1.5){$\cdots$};
\node[right] at (5, .5){$\cdots$};
\node[left] at (0, 1.5){row $r_0+1$:};
\node[left] at (0, .5){row $r_0$:};
}\]
\end{remark}

One can use Theorem~\ref{theorem.component change} to further describe the edges between different quasicrystal skeleton components.

\begin{corollary}
\label{corollary.component change}
Let $T \stackrel{~I~}{\longrightarrow} T'$ with $I=[i,i+2m]$ an edge in $\mathsf{CS}(\lambda)$.
Suppose $T \in \mathsf{QCS}_\alpha$ and $T' \in \mathsf{QCS}_\beta$ with $\alpha \neq \beta$. Then:
\begin{enumerate}
\item $i = 1$ or $i-1$ is a descent in $T$; and
\item $I$ is length-preserving: the lengths of $\mathsf{Des}(T)$ and $\mathsf{Des}(T')$ are equal.
\end{enumerate}
\end{corollary}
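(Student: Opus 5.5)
The plan is to derive both parts from Theorem~\ref{theorem.component change}. Since $\alpha\neq\beta$, both \ref{condition1} and \ref{condition2} hold. I first pin down where the letters of $A=[i,k]$ sit in $T$ itself, not just in $\varphi^{-1}(T)$. By Remark~\ref{remark.outpacing}, the letters of $I^-=[i,i+m]$ form a horizontal strip read in increasing order from left to right. \ref{condition1} puts the $k-i+1$ letters $i,\dots,k$ in the first $k-i+1$ columns, and a horizontal strip has at most one letter per column. Hence $i$ is in column $1$, $i+1$ is in column $2$, and so on. In particular $i$ occupies the cell $(r_0,1)$.

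For part (1), assume $i>1$ and locate $i-1$. It cannot be in row $r_0$, because $i$ is the first entry of that row. It cannot be in a row $r>r_0$ either: then $T(r,1)\leqslant i-1<i=T(r_0,1)$, contradicting the increase of column $1$. So $i-1$ lies in a strictly lower row than $i$, i.e.\ $i+1-1=i$ is in a higher row than $i-1$, which says exactly that $i-1$ is a descent of $T$. This part is short.

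For part (2), I compare the descent sets of $T$ and $T'=\cycle(I)\cdot T$ letter by letter. Write $\mathsf{des}_{\mathrm{SYT}}$ for the descent set of a standard Young tableau; the length of $\mathsf{Des}$ is one more than its size, so it suffices to show that $|\mathsf{des}_{\mathrm{SYT}}(T)|=|\mathsf{des}_{\mathrm{SYT}}(T')|$. The cycle only relabels letters in $[k,k+m]\subseteq I$, so descents at $j$ with $j+1<i$ or $j>i+2m$ are unchanged. Since $k>i$, the letter $i$ is fixed by the cycle, so the status of $i-1$ is also unchanged.

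Inside $I$, I use the horizontal-strip description of $I^-$ and $I^+$ for both $T$ and $T'$; the latter is legitimate because $I$ is also a Dyck pattern interval of $T'$ by the description in \cite{BCDS.2025}. Consecutive letters within the same strip are never descents, so inside $I$ the only possible descent is at $i+m$. By Remark~\ref{YCTrows}, $i+m+1$ sits in column $1$ directly above $i$. Since $i+m$ lies in the horizontal strip $I^-$ to the right of column $1$, it is in a lower row, so $i+m$ is a descent of $T$. After the cycle, $i+m+1$ is still in column $1$ in $T'$ (either unchanged, or replaced by $i+m$ when $k=i+1$), and I check the same conclusion holds there.

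The remaining and hardest step is the boundary letter $i+2m$. Its status changes only when $k+m=i+2m$, i.e.\ $k=i+m$, in which case the cell of $k$ receives the label $i+2m$. Here I would use that $k$ is the unpaired closed bracket, and hence the rightmost letter of the $I^-$-strip not outpaced by $I^+$, together with column-strictness. The goal is to show that the position of $i+2m+1$ relative to the new $i+2m$ gives the same descent status as before, or, failing that, that any change at $i+2m$ is compensated by a change at $i+m$ or $i+m-1$. I expect this bookkeeping to be the main obstacle, and I would first try the direct row comparison using the picture in Remark~\ref{YCTrows}.
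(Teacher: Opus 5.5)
Your part (1) is correct and is the paper's argument: \ref{condition1} puts $i$ in column $1$, so $i-1$ lies in a strictly lower row. In part (2), however, the claim that $I$ is again a Dyck pattern interval of $T'$ is false. Take $T$ with bottom row $1,2$ and top row $3$, and $I=[1,3]$. Then $T'$ has bottom row $1,3$ and top row $2$, and $\row(T')|_{[1,2]}=21$ does not insert to a single row. What does hold is the following. The cycle relabels the cells of $I^-\setminus\{k\}$ by $[i,i+m-1]$, and the cells of $[i+m+1,k+m]\cup\{k\}\cup[k+m+1,i+2m]$ by $[i+m,i+2m]$. So in $T'$ the two horizontal strips are $[i,i+m-1]$ and $[i+m,i+2m]$, and the interior descent moves from $i+m$ to $i+m-1$. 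Also, since $k>i$, the cell of $i+m+1$ always receives the label $i+m$, not only when $k=i+1$. Under \ref{condition1}, the new label $i+m$ sits at $(r_0+1,1)$, while $i+m-1$ sits in an old $I^-$ cell in a row $\leqslant r_0$. Hence $i+m-1$ is a descent of $T'$, and the interior count is still one on each side. This part is repairable.

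The genuine gap is the boundary case $k=i+m$, which you leave open. Neither outcome you aim for there can be reached, because your argument never uses \ref{condition2}, and without it the conclusion is false. Take $T$ with bottom row $1,2$ and top row $3,4$, and $I=[1,3]$. Then $k=2$ and \ref{condition1} holds. $T'$ has bottom row $1,3$ and top row $2,4$, so $\mathsf{Des}(T)=(2,2)$ while $\mathsf{Des}(T')=(1,2,1)$, with no compensating change inside $I$.

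The missing step runs as follows.
When $k=i+m$, the letter $k$ lies in column $m+1$ in a row strictly below that of $i+2m$, which lies in column $m$. So the status at $i+2m$ can only change from non-descent in $T$ to descent in $T'$. This happens only if $i+2m+1$ sits directly above $k$ in column $m+1$. The reason is that any cell of column $m+1$ above $k$ with entry at most $i+2m$ would have to hold a letter of $I^+$, but $I^+$ occupies columns $1,\dots,m$. In that configuration $\varphi^{-1}$ places $i+2m+1$ at $(r_0+1,m+1)$. Apply the triple condition to the cells $(r_0+1,m+1)$, $(r_0+1,m+2)$, $(r_0,m+2)$, and combine it with the propagation observation in the proof of Theorem~\ref{theorem.component change}. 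This gives $\alpha_{r_0+1}\geqslant\alpha_{r_0}$, contradicting \ref{condition2}.

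This is exactly how the paper concludes. The paper obtains the characterization of the length-increasing case by citing \cite[Lem.\ 4.25, Thm.\ 4.26]{BCDS.2025}: an edge is length-preserving if and only if neither $T|_{[i-1,i+2m]}$ nor $T|_{[i,i+2m+1]}$ rectifies to a rectangle. Your direct descent count would derive the same characterization by hand, which is a reasonable alternative once this step is supplied.
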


\begin{proof}
The first point follows immediately from $i$ being in the first column of $T$ by Theorem~\ref{theorem.component change}.

For the second point, we use \cite[Lem.\ 4.25, Thm.\ 4.26]{BCDS.2025}: $I$ is length-preserving if and only if $T|_{[i-1, i+2m]}$ and $T|_{[i, i+2m + 1]}$ 
do not jeu de taquin to rectangles. The fact that $T|_{[i-1,i+2m]}$ does not jeu de taquin to a rectangle follows from the fact that $k>i$ 
by Theorem~\ref{theorem.component change} and that there is a descent at $i-1$.  
For the latter, let $\pi = \row(T)$. Then~\cite[Lem.\ 4.25]{BCDS.2025} states that $T|_{[i, i+2m + 1]}$ is a rectangle if and only if 
\[\pi|_{\{i+m, i+2m, i+2m+1\}} = i+2m\ \hspace*{.2in} i+2m+1\ \hspace*{.2in} i+m\] and $\cycle(I) = (i+2m, i+2m-1, \dots, i+m)$. 
This means that under the assumption that $\alpha\neq \beta$, $I$ is length-increasing if and only if $k = i+m$ and $i + 2m + 1$ appears in the 
same column as $k$ in $T$ since by Theorem~\ref{theorem.component change} the letters in $A=[i,i+m]$ appear in the first $|A|$ columns. 
But in this case, the row containing $i+m+1$ in $\varphi^{-1}(T)$ is at least as long as the row containing $i$, so that $I$ would not be shape-changing.
\end{proof}

\subsection{Proof of Theorem~\ref{theorem.component change}}
\label{section.proof}

The structure of the proof of  Theorem~\ref{theorem.component change} is as follows. Henceforth, we use the notation $\yc{T}:=\varphi^{-1}(T)$ and
$I \cdot \yc{T}:=\varphi^{-1}(I \cdot T)$ for $T \in \SYT(\lambda)$ and $I$ a Dyck pattern interval in $T$.
\begin{enumerate}
    \item We show in \S\ref{ss:whenImovesk} that \ref{condition1} is equivalent to a local condition on the locations of the box containing $k$ in 
    $\yc{T}$ and the box containing $k+m$ in $I \cdot \yc{T}$. We refer to this condition as $I$ \emph{moving the cell of} $k$; see Definition~\ref{def:Imovesk}.
    \item Let $\yc{T}|_{J}$ be the restriction of $\yc{T}$ to the interval $J = [1,i+2m]$. We next show in \S\ref{ss:smallerintervalshapechange} 
    that $I$ moves the cell of $k$ if and only if the shape of $\yc{T}|_J$ and $I \cdot \yc{T}|_J$ differ by a specific row swap.
    \item Finally, we prove Theorem~\ref{theorem.component change} in \S\ref{ss:proofofcomponentchange} by showing that if \ref{condition1} is 
    met, then \ref{condition2} is met if and only if the shapes of $\yc{T}$ and $I \cdot \yc{T}$ differ.
\end{enumerate}

Before we embark on proving the above, we begin with some preliminary lemmas that study how the Dyck pattern interval $I$ interacts with the 
Young composition tableaux $\yc{T}$ and $I \cdot \yc{T}$. These concepts will be broadly applicable throughout our arguments. 

\subsubsection{Preliminary lemmas}
\label{ss:preliminarylemmas}

Our first result describes where elements in a Dyck pattern interval are placed in a Young composition tableau. 
Recall from Remark~\ref{remark.outpacing} that in a standard tableau $T\in \mathsf{CS}(\lambda)$ with Dyck pattern interval $I=[i,i+2m]$ 
 there are always weakly more letters in $I^+=[i+m+1,i+2m]$ than in $I^-=[i,i+m]$ (scanning from left to right). Since this is a statement about the columns
of $T$ and the set of letters in columns does not change under $\varphi^{-1}$, the statement remains true for the corresponding 
Young composition tableau $\yc{T}=\varphi^{-1}(T)$.

Throughout this section, we will fix the following notation: let $T \stackrel{~I~}{\longrightarrow} T'$ with $I=[i,i+2m]$ be an edge in $\mathsf{CS}(\lambda)$ 
and let $\cycle(I) = (m+k,m+k-1,\ldots,k)$ be the cycle such that $T' = \cycle(I) \cdot T$. For every $x \in [n]$, the action of $\cycle(I)$ on $x$ will be 
denoted $x':= \cycle(I) \cdot x$.

\begin{lemma}
\label{lemma.column separation}
Let $T \stackrel{~I~}{\longrightarrow} T'$ with $I=[i,i+2m]$ an edge in $\mathsf{CS}(\lambda)$ and let $\cycle(I) = (m+k,m+k-1,\ldots,k)$ be the cycle 
such that $T' = \cycle(I) \cdot T$. Then each entry of $\cycle(I)$ appears in a different column of $T$.
\end{lemma}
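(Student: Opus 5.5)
The plan is to show that the $m+1$ entries $k, k+1, \ldots, k+m$ of $\cycle(I)$ lie in pairwise distinct columns of $T$. First I will locate these entries inside $I$. Since $k\in I^-=[i,i+m]$, we have $k\le i+m$ and $k+m\le i+2m$. So the set $\{k,\ldots,k+m\}$ is $[k,i+m]\cup[i+m+1,k+m]$: a final segment of $I^-$ together with an initial segment of $I^+$. By Remark~\ref{remark.outpacing}(1), the letters of $I^-$ form a horizontal strip, so any two of them are in different columns; the same holds for the letters of $I^+$. It therefore suffices to show that no letter $x\in[k,i+m]$ shares a column with a letter $y\in[i+m+1,k+m]$.

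For this cross case I will use the bracket pairing of Definition~\ref{def:cycle}, reading $\pi=\row(T)$ with French notation, so rows are read from top to bottom. Suppose $x\in I^-$ and $y\in I^+$ lie in the same column. Then $y>x$ forces $y$ to sit above $x$, so $y$ is read before $x$, and the open bracket of $y$ precedes the closed bracket of $x$.

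The key claim is that the letters $i+m+1,\ldots,k+m$ of $I^+$ are paired exactly with the closed brackets in $[i,k-1]$, and $k$ is unpaired. I will establish this by describing the pairing through the column structure. Remark~\ref{remark.outpacing}(2) says that, scanning left to right, $I^+$ never gets ahead of $I^-$ in count. Combined with both sets being horizontal strips read in increasing order, this should let me identify the pairing concretely: the $j$-th letter of $I^+$ is matched with a letter of $I^-$ lying weakly to its left in column order. Equivalently, I will identify the pairing with the Dyck path produced by the two-row RSK shape $(m+1,m)$.

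From this description I expect the following picture. The letters of $I^-$ to the left of $k$ are all paired, those to the right of $k$ are paired with letters of $I^+$ lying in strictly later columns, and $k$ is the unique unmatched letter. Then a collision between some $x\ge k$ in $I^-$ and some $y\le k+m$ in $I^+$ in the same column would contradict the ordering forced by this matching, because the counts of $I^-$ and $I^+$ letters up to that column would be incompatible with $k$ being unpaired.

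I expect the main obstacle to be converting the reading-word pairing into precise column information, since the reading order mixes rows and columns. To handle this I will work with the Dyck path $P$ from Remark~\ref{remark.outpacing}(2). In this path, column-by-column prefix counts of $I^-$ minus $I^+$ stay nonnegative. Here $k$ corresponds to the last step at which the path attains its final minimum. A shared column containing $x\in[k,i+m]$ and $y\in[i+m+1,k+m]$ would then create a prefix in which $I^+$ exceeds $I^-$ after position $k$, contradicting minimality.

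If this argument proves too slippery, the fallback is an induction on $m$: remove the outermost matched pair, namely $i$ with its partner or $i+2m$ with its partner, to obtain a smaller Dyck pattern interval, and track how $k$ changes under that removal.
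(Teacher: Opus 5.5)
\textbf{There is a genuine gap: the shared-column case is never actually proved, and the tool you plan to use for it is false.}

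\emph{What is correct.} Your reduction is the same as the paper's. The entries of $\cycle(I)$ are $[k,i+m]\cup[i+m+1,k+m]$, and $I^-$ and $I^+$ are each horizontal strips. So the only thing to exclude is a column shared by some $x\in[k,i+m]$ and some $y\in[i+m+1,k+m]$. Your pairing claim is also correct, and it needs no column information. Write $\pi|_I=D_0\,k\,D_1$ with $D_0,D_1$ balanced. Each strip is read in increasing order, so $D_0$ consists exactly of the closes $[i,k-1]$ and the opens $[i+m+1,k+m]$.

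\emph{Where it fails.} The column-path facts you intend to use are false. Column-prefix counts of $I^-$ minus $I^+$ need not stay nonnegative. A letter of $I^+$ need not be matched with a letter of $I^-$ weakly to its left. Take $T$ with rows $1\,2\,3$ (bottom) and $4$ (top), and $I=[2,4]$. Then $\pi|_I=4\,2\,3$ and $k=3$. Column $1$ contains $4\in I^+$ and no letter of $I^-$, and $4$ is paired with $2$, which lies to its right. Remark~\ref{remark.outpacing}(2) does not rescue this in either direction: as literally worded it already fails at the last column, since $|I^-|=m+1>|I^+|$. More fundamentally, a count over columns cannot tell you how the two letters of a column containing one letter from each strip interleave in $\pi|_I$. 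That is exactly the configuration you must exclude. So ``a collision would create a prefix in which $I^+$ exceeds $I^-$ after $k$'' is not yet an argument, and the fallback induction on $m$ is not specified.

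\emph{What is missing, and one way to supply it.} The paper separates the two blocks by the column of $k$. The opens $[i+m+1,k+m]$, which are read before $k$, lie strictly to its left. The closes $[k+1,i+m]$ lie strictly to its right by the horizontal-strip property. A direct local argument also closes your cross case:
\begin{enumerate}
\item Suppose $x$ and $y$ share column $c$. Any cell strictly between them holds a value in $(x,y)\subseteq I$ sharing a column with $x$ or with $y$, which is impossible. So $y$ sits directly above $x$, in rows $r+1$ and $r$.
\item Let $x-t,\dots,x-1$ ($t\geqslant 0$ maximal) be letters of $I^-$ filling the cells immediately left of $x$. Every cell further left in row $r$ then holds a value below $i$.
\item The cells above $x-t,\dots,x-1$ hold values in $(x-t,y)$ that are not in $I^-$, so they are letters of $I^+$.
\item Hence in $\pi|_I$, the stretch from the cell above $x-t$ through $x$ is at least $t+1$ opens followed by the $t+1$ closes $x-t,\dots,x$. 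All of these closes are therefore paired.
\item But $y$ lies in $D_0$, so it is read before $k$, while $k\leqslant x$ is read no later than $x$. The only closes read between $y$ and $x$ are $x-t,\dots,x-1$. So $k\in\{x-t,\dots,x\}$ would be paired, a contradiction.
\end{enumerate}
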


\begin{proof}
Since $I$ is a Dyck pattern interval, by Remark~\ref{remark.outpacing} the letters in the intervals $I^-=[i,i+m]$ and $I^+=[i+m+1,i+2m]$ each appear in 
increasing order in $\pi|_I$,  where $\pi=\row(T)$ by Definition~\ref{definition.Dyck pattern}.
Furthermore, by Remark~\ref{remark.outpacing}, the entries in $I^-$ (resp. $I^+$) must appear in distinct columns of $T$.
Since $k$ is unpaired in the bracketing of the letters in $I^-$ and $I^+$ in $\pi|_I$ but all other entries are bracketed, the 
entries in $[i,\hdots, k-1]$ must be bracketed with the first $k-i$ entries from $I^+$, which must therefore appear in a higher row of $T$.  
These are precisely the entries in the interval $[i+m+1,m+k]$, which therefore appear in distinct columns to the left of the column containing $k$.  

Similarly, the entries in the interval $[k+1,i+m]$ are all paired with entries from $[i+m+1,i+2m]$ occurring to the right of the column containing $k$.  
We already know that the entries in $[k+1,i+m]$ must appear in distinct columns, and since they appear to the right of the column containing $k$, 
none of these can appear in the same column as any entry from $[i+m+1,m+k]$.  Therefore the entries in the interval 
$\{k\} \cup [k+1,i+m] \cup [i+m+1,m+k]=[k,m+k]$ must all appear in distinct columns.
\end{proof}

Next we introduce some useful notation. Let $\shape(\yc{T})$ be the (composition) shape of $\yc{T}=\varphi^{-1}(T)$ for $T\in \mathsf{SYT}(\lambda)$. 
For any interval $J \subseteq [1,n]$, let $\shape(\yc{T}|_J)$ be the arrangement of boxes containing elements of $J$ in $\yc{T}$.
For $x \in [1,n]$, if $x$ appears in row $r$ and column $c$ of $\yc{T}$, we write
\[
	\cell_{\yc{T}}(x) = (r,c), \quad \row_{\yc{T}}(x) = r, \quad  \col_{\yc{T}}(x)=c, 
	\quad \text{and} \quad 
	\yc{T}(r,c) = x.
\]
For a fixed column $c$, we write
\[ 
	\Col_{\yc{T}}(c):= \{ x \in [n] \mid x \textrm{ is in column } c \textrm{ in } \yc{T} \}. 
\]

\begin{definition}\label{definition.columnstable}
    A column $c$ of $\yc{T}$ is \defn{stable under $I$} if 
\[
	\cell_{\yc{T}}(x) = \cell_{I \cdot \yc{T}}(\cycle(I) \cdot x) 
\]
for all $x \in \Col_{\yc{T}}(c)$.
\end{definition}

For example, in Figure~\ref{figure.QCS321}, we have the edge
\[\TIKZ[font=\footnotesize]{
\node (T-hat) at (0,0) {
	$\yc{T} =$
	\TIKZ[scale=0.35]{
	\filldraw[kstrip!20] (2,0) rectangle ++(1,1);
	\Tableau[\scriptsize]{{1,2,3},{4,5},{6}})
	}};
\node (15-T-hat) at (3,0) {
	\TIKZ[scale=0.35]{
	\filldraw[kstrip!20] (2,1) rectangle ++(1,1);
	\Tableau[\scriptsize]{{1,2},{3,4,5},{6}})
	}};
\draw[->] 
	(T-hat) to node[above, sloped] {$[1,5]$} (15-T-hat);
}\]
with $I=[1,5]$ and $\cycle(I)=(543)$.
Here, columns 1 and 2 are stable, but column 3 is not.

For each of the following lemmas, 
we define $T$, the interval $I = [i, i+2m]$, and $k$ as in Theorem~\ref{theorem.component change}.

The next lemma, which we refer to as the \emph{Column-Stability Lemma} (Lemma~\ref{lem_dec_by_one}), is useful for induction on columns that 
are largely unaffected by the application of $I$. 

\begin{lemma}[Column-Stability Lemma]
\label{lem_dec_by_one}
Fix an interval $J=[1,j]\subseteq [1,n]$. Suppose that $\Col_{\yc{T}|_J}(c-1)$ is stable under $I$ and $k \not \in \{ \Col_{\yc{T}|_J}(c-1), \Col_{\yc{T}|_J}(c) \}$. 
Then $\Col_{\yc{T}|_J}(c)$ is also stable under $I$.
\end{lemma}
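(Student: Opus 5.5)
We use the insertion description of $\varphi^{-1}$ from Definition~\ref{def.phi}. Column $c$ of $\yc{T}|_J$ is built from column $c-1$: its entries are placed in increasing order, and each entry goes into the highest unoccupied cell whose left neighbour is smaller. The same procedure builds column $c$ of $I\cdot\yc{T}|_J$ from column $c-1$ of $I\cdot\yc{T}|_J$.

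First, the sets of letters in each column do not depend on the placement. The column of $x'$ in $T'$ equals the column of $x$ in $T$, since $\cycle(I)$ just relabels letters in place. So $\Col_{I\cdot \yc{T}|_J}(c)=\cycle(I)\cdot\Col_{\yc{T}|_J}(c)$, provided $J$ is stable under the cycle. That holds because $j\geqslant i+2m\geqslant k+m$, or else $J$ misses the cycle entirely. By hypothesis column $c-1$ is stable, so its left-neighbour configuration is carried over exactly: the cell holding $y$ in $\yc{T}$ holds $y'$ in $I\cdot\yc{T}$.

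The core claim is that the map $x\mapsto x'$ preserves every comparison that the placement algorithm makes. These comparisons are of two kinds: $x$ against a left neighbour $y$ in column $c-1$, and the relative order of entries within column $c$. The cycle $(m+k,\dots,k)$ sends $k\mapsto k+m$ and $x\mapsto x-1$ for $x\in[k+1,k+m]$, and it fixes all other letters. So it is order-preserving on $[n]\setminus\{k\}$. Any two letters $a,b\neq k$ satisfy $a<b$ iff $a'<b'$. By hypothesis $k$ lies in neither column $c-1$ nor column $c$, so every comparison involved avoids $k$.

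With this, I run the placement of column $c$ step by step, by induction on the entries of column $c$ in increasing order. Sorting $\Col_{\yc{T}|_J}(c)$ and sorting its image give corresponding orders. At each step the set of candidate cells is the same in both tableaux: a cell qualifies when it is unoccupied and its left neighbour is smaller. This uses stability of column $c-1$, the preserved comparisons, and the inductive hypothesis that earlier entries of column $c$ occupy corresponding cells. So "highest such cell" picks the same cell, and $\cell_{\yc{T}|_J}(x)=\cell_{I\cdot\yc{T}|_J}(x')$ for all $x$ in column $c$. The column-$1$ base case, where column $1$ is simply sorted, is not needed here, since column $c-1$ is assumed stable.

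The only delicate point is making sure that restricting to $J$ commutes with the cycle and with $\varphi^{-1}$. The restriction $\yc{T}|_J$ is just the positions of the letters $\leqslant j$. By Remark~\ref{remark.consecutive order}, letters are placed in consecutive order, so these positions are what the algorithm produces on $T|_J$. I would state this explicitly, noting that the cycle preserves $J$.
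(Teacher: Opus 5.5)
Your proof is correct and is essentially the paper's argument. Because $k$ lies in neither column $c-1$ nor column $c$, the relevant comparisons survive the relabeling. You then place the entries of column $c$ in increasing order, using stability of column $c-1$ and induction on the entries already placed. Getting order preservation from the monotonicity of $\cycle(I)$ on $[n]\setminus\{k\}$ is slightly cleaner than the paper's appeal to Lemma~\ref{lemma.column separation}.

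One side remark is inaccurate: the hypotheses do not force $j\geqslant k+m$ or $j<k$. For $k\leqslant j<k+m$ the letter $j+1$ becomes $j$ and enters $J$. This is harmless, because placing $x$ (resp.\ $x'$) involves only smaller letters, and these correspond bijectively under the cycle. In any case, the paper applies the lemma only with $J=[1,n]$ or $J=[1,i+2m]$.
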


\begin{proof}
Let $x' = \cycle(I) \cdot x$ for every entry $x$ in $T$. By Lemma~\ref{lemma.column separation}, at most one entry is changed by $I$ in each of 
$\Col_{\yc{T}|_J}(c)$ and $\Col_{\yc{T}|_J}(c-1)$; and since neither contains $k$, each entry $y\in \Col_{\yc{T}|_J}(c-1) \cup \Col_{\yc{T}|_J}(c)$ has 
$y' = y$ or $y' = y-1$. Hence, for any two $y, z \in \Col_{\yc{T}|_J}(c-1) \cup \Col_{\yc{T}|_J}(c)$, we have  $z<y$ if and only if $z'<y'$. 

Now, let $y$ be the smallest entry in column $\Col_{\yc{T}|_J}(c)$. Then $y'$ is also the smallest entry in $\Col_{I \cdot \yc{T}|_J}(c)$, so that $y'$ lies in 
the highest row of $I \cdot \yc{T}$ that is immediately to the right of an entry smaller than $y'$. Therefore $\row_{I \cdot \yc{T}}(y')=\row_{\yc{T}}(y)$ since 
by assumption column $\Col_{\yc{T}|_J}(c-1)$ is stable under $I$. 

The same argument carries through for the remainder of the entries in column $\Col_{\yc{T}|_J}(c)$, ignoring the rows of $I \cdot \yc{T}$ 
containing a smaller entry in column $\Col_{\yc{T}|_J}(c)$. Therefore for each $x$ in column $\Col_{\yc{T}|_J}(c)$, we have  
$\row_{I \cdot \yc{T}}(x')=\row_{\yc{T}}(x)$ and thus column $\Col_{\yc{T}|_J}(c)$ is stable under $I$. 
\end{proof}

\begin{lemma}\label{lemmaI:skew shape stability}
    Let $T,S \in \SYT(\lambda)$ and  $J = [1,j] \subseteq [1,n]$ be an interval. If $T$ and $S$ are identical outside of $J$ and 
    $\shape(\yc{T}|_J) = \shape(\yc{S}|_J)$, then $\shape(\yc{T}) = \shape(\yc{S})$.
\end{lemma}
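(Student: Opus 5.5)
The plan is to use Remark~\ref{remark.consecutive order}: $\varphi^{-1}$ can be computed by inserting the entries $1,2,\ldots,n$ one at a time, in increasing order. Each entry $x$ goes into the column it occupies in $T$, in the highest unoccupied cell of that column whose left neighbor is smaller than $x$ (for the first column, directly above the current top entry). The key observation is that where $x$ lands depends only on two things: the column of $x$ in $T$, and the partial filling produced by the entries $1,\ldots,x-1$. In particular, it depends only on the occupied cells and which of them hold entries smaller than $x$; since all previously placed entries are smaller than $x$, this is just the set of occupied cells.

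Concretely, I will argue by induction on $x\in[j+1,n]$ that the partial fillings of $\yc{T}$ and $\yc{S}$ by the entries $1,\ldots,x$ occupy the same set of cells, and moreover that the entries greater than $j$ sit in identical cells. The base case $x=j$ is exactly the hypothesis $\shape(\yc{T}|_J)=\shape(\yc{S}|_J)$. Note that the entries of $J$ may be arranged differently inside this common shape, but every one of them is smaller than every entry larger than $j$.

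For the inductive step, take $x>j$. Since $T$ and $S$ agree outside $J$, the entry $x$ lies in the same column $c$ of $T$ and of $S$. At the moment $x$ is placed, the occupied cells in the two partial fillings coincide by induction, and every occupied cell contains an entry smaller than $x$. So the rule ``highest unoccupied cell in column $c$ whose left neighbor is occupied'' chooses the same cell in both. For $c=1$ this is the cell just above the current top of column $1$. Hence $x$ lands in the same cell in $\yc{T}$ and $\yc{S}$, and the occupied sets agree again. At $x=n$ we get $\shape(\yc{T})=\shape(\yc{S})$.

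The only delicate point is checking that Remark~\ref{remark.consecutive order} really gives this entry-by-entry procedure, with the comparison ``left neighbor smaller'' reducing to ``left neighbor occupied'' for later entries. The ordering in Definition~\ref{def.phi} compares $x$ only to entries already placed, and all of those are smaller than $x$. So the reduction holds, and I expect no real obstacle beyond stating this carefully.
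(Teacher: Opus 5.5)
Your proof is correct and follows essentially the same route as the paper: place the entries $j+1,\dots,n$ one at a time using the consecutive-order form of $\varphi^{-1}$ (Remark~\ref{remark.consecutive order}), and observe that each lands in the same cell of $\yc{T}$ and $\yc{S}$ because it lies in the same column and exceeds every entry already placed. You also spell out why ``left neighbor smaller'' reduces to ``left neighbor occupied'' at each step, which the paper's one-line justification leaves implicit.
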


\begin{proof}
By assumption
\[
	\alpha := \shape(\yc{T}|_{J}) = \shape(\yc{S}|_{J}). 
\]
It remains to place the boxes in $[j+1,n]$ in both $\yc{T}$ and $\yc{S}$. By assumption $T|_{[j+1,n]} = S|_{[j+1,n]}$ (as skew tableaux), 
so $\yc{T}$ and $\yc{S}$ have the same column sets on $[j+1,n]$. We place box $j+1$ in both $\yc{T}|_{[1,j]}$ and $\yc{S}|_{[1,j]}$ under $\varphi^{-1}$,
and since $j+1$ is larger than any element in $[1,j]$, it is placed in the same position in both composition tableaux since they both have shape $\alpha$. 
We continue in this method for the remaining boxes $j+2, \ldots, n$, where each box is placed in the same position in both composition tableaux. This 
yields the claim.
\end{proof}

\subsubsection{When $I$ moves the cell of $k$}
\label{ss:whenImovesk}

We now wish to explicitly describe how the action of $I$ moves $\cell_{\yc{T}}(k)$ in $\yc{T}$. To do so, we introduce some language to describe 
the relationship between intervals in Young composition tableaux. 

\begin{definition}\label{def:Imovesk}
    A Dyck pattern interval $I$ is said to \defn{move the cell of $k$} if $\cell_{\yc{T}}(k) \neq \cell_{I \cdot \yc{T}}(\cycle(I)\cdot k)$. 
    Otherwise, $I$ does not move the cell of $k$.
\end{definition}

\begin{example}
Consider the following subgraph of Figure~\ref{figure.QCS321}.
\[\TIKZ[font=\footnotesize]{
\node (T-hat) at (0,0) {
	$\yc{T} =$
	\TIKZ[scale=0.35]{
	\filldraw[kstrip!20] (2,0) rectangle ++(1,1);
	\Tableau[\scriptsize]{{1,2,3},{4,5},{6}})
	}};
\node (15-T-hat) at (3,.75) {
	\TIKZ[scale=0.35]{
	\filldraw[kstrip!20] (2,1) rectangle ++(1,1);
	\Tableau[\scriptsize]{{1,2},{3,4,5},{6}})
	}};
\node (24-T-hat) at (3,-.75) {
	\TIKZ[scale=0.35]{
	\filldraw[kstrip!20] (2,0) rectangle ++(1,1);
	\Tableau[\scriptsize]{{1,2,4},{3,5},{6}})
	}};
\draw[->, shorten <= -3pt, bend left=10] 
	(T-hat) to node[above, sloped] {$[1,5]$} (15-T-hat);
\draw[->, bend right=10] 
	(T-hat) to node[below, sloped] {$[2,4]$} (24-T-hat);
}
\]
Both edges out of $\yc{T}$ have $k=3$, one that moves the cell of $k$ and one that does not. Specifically, the interval 
$I=[1,5]$ moves the cell of $3$ since $\cell_{\yc{T}}(3) =(1,3) \neq (2,3) = \cell_{I \cdot \yc{T}}(5)$ and $\cycle([1,5]) = (543)$. 
The Dyck pattern interval $I=[2,4]$ does not move the cell of $3$ since $\cell_{\yc{T}}(3) =(1,3) = \cell_{I \cdot \yc{T}}(4)$ and $\cycle([2,4]) = (43)$.
\end{example}

Our goal is to prove the following.

\begin{proposition}
\label{prop:kmovesiffcondition1}    
Let $T\in \SYT(\lambda)$ with Dyck pattern interval $I$ and $\cycle(I) = (m+k, m+k-1, \ldots, k)$. Then $I$ moves the cell of $k$ 
if and only if \ref{condition1} in Theorem \ref{theorem.component change} is satisfied. 
\end{proposition}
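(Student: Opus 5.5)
The plan is to analyze how the insertion procedure $\varphi^{-1}$ of Definition~\ref{def.phi} treats the letters of $\cycle(I)=(m+k,\ldots,k)$, column by column. By Lemma~\ref{lemma.column separation}, the letters $k,k+1,\ldots,k+m$ occupy distinct columns of $T$. Under $\cycle(I)$, the letter $k$ becomes $k+m$ and every other letter $x\in[k+1,k+m]$ becomes $x-1$. Let $c_k=\col_T(k)$.

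In every column strictly left of $c_k$, $k$ does not occur. The first column is stable: the leftmost column of $\yc{T}$ is just column $1$ of $T$ sorted, and relabelling by $\cycle(I)$ preserves the relative order within a column. Applying the Column-Stability Lemma~\ref{lem_dec_by_one} inductively with $J=[1,n]$ then shows that all columns $1,\ldots,c_k-1$ are stable under $I$. So whether $I$ moves the cell of $k$ is decided entirely by how $k$, and then $k+m$, is placed into column $c_k$ given the common columns to its left.

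In column $c_k$, let $y$ be the entry immediately left of $k$ in $\yc{T}$, and let $R$ be the set of rows whose column-$(c_k-1)$ entry lies in the relevant range. The first task is to identify where the letters $[i+m+1,k+m]$ sit. These are the $I^+$ letters bracketed with $[i,k-1]$, and they lie in columns left of $c_k$. Since $k$ becomes $k+m$, which is larger than these letters (now $[i+m,k+m-1]$), the new entry $k+m$ is placed by the rule ``highest unoccupied cell whose left neighbour is smaller''. In $\yc{T}$, by contrast, $k$ was smaller than those letters, so it could not be placed to the right of any of them.

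Thus the cell of $k$ moves if and only if some row, higher than $\row_{\yc{T}}(k)$ and still unoccupied in column $c_k$ when the letter is inserted, has in column $c_k-1$ a letter from $[i+m+1,k+m]$ (or another letter between $k$ and $k+m$). Using Remark~\ref{remark.consecutive order}, I would insert in consecutive order and check that the letters in $[k+1,k+m]$ in column $c_k$ of $T$ do not interfere.

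The main obstacle is converting this local criterion into \ref{condition1}. For the direction \ref{condition1} $\Rightarrow$ moves, the letters $i,\ldots,k$ lie in columns $1,\ldots,|A|$. Since the $I^-$ letters form an increasing horizontal strip, they occupy exactly one cell in each of these columns. Their bracket partners $i+m+1,\ldots,k+m-1$ lie in higher rows in the columns $1,\ldots,k-i$. I would then show by induction along columns that $\yc{T}$ has row $r_0$ beginning $i,\ldots,k$ and a row above beginning $i+m+1,\ldots$, as depicted in Remark~\ref{YCTrows}. After the cycle, $k+m$ exceeds $k+m-1$ in column $c_k-1$ of that higher row, so it is inserted there, and the cell moves. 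In the case $k=i$ the cycle fixes the left structure, and $k+m$ simply replaces $k$ in the same cell, because no $I^+$ letter lies left of it.

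For the converse, suppose \ref{condition1} fails. Then either $k=i$, or some letter of $[i,k]$ lies outside the first $|A|$ columns. In the latter case there is a gap column to the left of $k$ containing no $I^-$ letter. I would argue that the chain of left neighbours of $k$ in $\yc{T}$, read leftwards, must at some point leave the interval $I$ and become a letter smaller than $i$. From that point the new value $k+m$ competes for the same cell in column $c_k$ as $k$ did, since all higher candidate rows with left neighbour below $k+m$ but above $k$ would already be occupied or have left neighbours outside $I$. I expect this bookkeeping to be the delicate part of the argument.
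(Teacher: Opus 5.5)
Your reduction to column $c_k$ via the Column-Stability Lemma matches the paper. So does your local criterion there, which is essentially Lemma~\ref{lemma:kmoves1}. Your proof that Condition~I forces the cell to move is also the paper's argument, slightly more explicit: check the two-row pattern of Remark~\ref{YCTrows} column by column, then observe that $k+m$ can now sit to the right of $k+m-1$. One slip there: the bracket partners of $[i,k-1]$ are $[i+m+1,k+m]$, not $[i+m+1,k+m-1]$. You need $k+m$ itself in column $k-i$.

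The gap is the converse, and the mechanism you propose for it cannot work. Start by sharpening your own criterion:
\begin{itemize}
\item The only letter $a$ with $k<a\leqslant k+m$ that can lie in column $c_k-1$ is $k+m$. The letters $[k+1,i+m]$ lie right of column $c_k$, and $[i+m+1,k+m]$ lie in strictly increasing columns left of it.
\item The cell to the right of such a $k+m$ is always still free when $k+m$ is inserted into $I\cdot\yc{T}$. The only entries of column $c_k$ smaller than $k+m$ are letters $<i$, and these cannot sit to the right of $k+m-1$.
\end{itemize}
So a row that lets $k+m$ move is never ``already occupied'', and its left neighbour is never ``outside $I$''. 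The move is blocked only when no such row exists.

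What you actually have to prove is this: if $k+m$ lies in cell $(r,c_k-1)$ of $\yc{T}$ with $r>r_0$, then Condition~I holds. Noting that the chain of left neighbours of $k$ in row $r_0$ eventually drops below $i$ does not rule this configuration out. For that you must relate row $r_0$ to row $r$, and this is the missing step.

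The paper supplies it with the triple condition. Whenever $\yc{T}(r_0,c)<\yc{T}(r,c)$, the triple condition forces $\yc{T}(r_0,c)<\yc{T}(r,c-1)<\yc{T}(r,c)$. The horizontal-strip structure then pins $\yc{T}(r,c-1)$ to the next smaller letter of $I^+$, and the bracketing places the next smaller letter of $I^-$ in column $c-1$, row $r_0$. Hence the pattern $(k-1,k+m),(k-2,k+m-1),\dots$ propagates leftward all the way to column~$1$.

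Equivalently, once you show that row $r$ holds only letters of $[i+m+1,k+m]$ in columns $1,\dots,c_k-1$, counting gives $c_k-1\leqslant k-i$. Combined with the fact that $[i,k]$ occupy distinct columns, this is exactly Condition~I. Without a propagation argument of this kind, the direction ``moves $\Rightarrow$ Condition~I'' remains unproved.
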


To prove Proposition~\ref{prop:kmovesiffcondition1}, we first establish a technical lemma.

\begin{lemma}\label{lemma:kmoves1}
Let $\cell_{\yc{T}}(k) = (r_0,c_0)$. 
The Dyck pattern interval $I$ moves the cell of $k$ if and only if 
    \[\col_{\yc{T}}(k-1) = \col_{\yc{T}}(k+m) = c_0-1
    \quad\text{and}\quad
    \row_{\yc{T}} (k+m) > r_0.
    \]
Note that the second condition is never satisfied if $c_0 = 1$. 
\end{lemma}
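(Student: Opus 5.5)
We compare the insertion of $\yc{T}=\varphi^{-1}(T)$ and $I\cdot\yc{T}=\varphi^{-1}(I\cdot T)$ entry by entry, in increasing order (Remark~\ref{remark.consecutive order}), restricted to $J=[1,k+m]$. By Lemma~\ref{lemma.column separation}, the letters of $[k,k+m]$ lie in distinct columns of $T$. Under $\cycle(I)$, the letter $k$ becomes $k+m$, each letter $x\in[k+1,k+m]$ becomes $x-1$, and all other letters are fixed. Hence, in $I\cdot T$, the columns are the same sets relabeled, and the relative order of entries in any two adjacent columns not containing $k$ is preserved.

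First, we show that all columns strictly left of $c_0-1$, and all columns not adjacent to $k$, behave stably. Column~$1$ is always stable, since only relative order within a column matters there. We induct using the Column-Stability Lemma~\ref{lem_dec_by_one} with $J=[1,k+m-1]$, or more precisely $J=[1,j]$ for suitable $j$. This gives stability of every column $c<c_0$, because $k$ is not in those columns.

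The question is then whether $k$, which in $I\cdot T$ has value $k+m$ in column $c_0$, lands in the same row. In $\yc{T}$, $k$ is placed in the highest unoccupied cell of column $c_0$ whose left neighbor is $<k$. In $I\cdot\yc{T}$, $k+m$ is placed after all letters up to $k+m-1$. So the difference comes from letters of column $c_0$ in $(k,k+m]$, and from left neighbors in column $c_0-1$ lying in $(k,k+m]$, which become $<k+m$.

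Column $c_0$ contains no other letter of $[k,k+m]$. Smaller column-$c_0$ entries $<k$ are placed identically by stability of column $c_0-1$, and larger entries are placed afterwards in both cases. So the only way the cell can change is if some cell in column $c_0-1$ that was blocked for $k$ (left neighbor $>k$) becomes available for $k+m$ (left neighbor $<k+m$) and is higher than $r_0$. That left neighbor must lie in $(k,k+m]$ before relabeling, so by Lemma~\ref{lemma.column separation} it is unique.

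We then identify it. Entries in $[k+1,i+m]$ lie to the right of $k$'s column in $T$, so they cannot be in column $c_0-1$. Therefore it is $k+m$ itself, or another element of $[i+m+1,k+m]$; relabeling shifts it down by one. Next we check that the new $k+m$ is placed there exactly when that neighbor is $k+m$ in column $c_0-1$ sitting in a row above $r_0$ and the cell is free. This uses that $k-1$ (which becomes $k+m$'s paired partner) must also sit in column $c_0-1$. Otherwise the entry $k+m-1$ (relabeled from $k+m$) would be smaller than the new $k+m$, but $k-1$'s position forces the same placement as before.

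I expect this step to be the main obstacle: showing that the blocked cell above $r_0$ must have left neighbor exactly $k+m$ and that $\col(k-1)=c_0-1$. I would handle it by the bracketing in $\pi|_I$, where $k-1$ is bracketed with $k+m$, so $k+m$ lies weakly left of $k-1$'s column, together with the triple condition~\eqref{equation.triple} comparing the rows of $k-1$, $k$ and $k+m$.

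For the converse, we assume the displayed conditions. In $\yc{T}$, the cell right of $k+m$ in column $c_0$ is unavailable for $k$ since $k+m>k$. After relabeling, the left neighbor $k+m-1$ is $<k+m$, and that cell is higher than $r_0$ and still unoccupied, because entries of column $c_0$ below $k$ were placed identically. Hence $k+m$ is placed strictly higher and the cell moves. If $c_0=1$, there is no column $c_0-1$, and the first column is sorted, so the cell never moves.
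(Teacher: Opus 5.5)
Your reduction is the same as the paper's. Columns $1,\dots,c_0-1$ are stable, and the slot $(r_0,c_0)$ stays available for $k+m$. So the cell of $k$ moves iff some row above $r_0$ has, in column $c_0-1$, a left neighbor $y$ with $y>k$ and $\cycle(I)\cdot y<k+m$, i.e.\ $y\in(k,k+m]$. This $y$ is unique by Lemma~\ref{lemma.column separation} and cannot lie in $[k+1,i+m]$. Your converse is correct, and a bit more economical than the paper's, since you only need an available slot above row $r_0$, not the exact landing row.

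\textbf{The gap.} It lies in the forward direction, exactly at the step you defer. You must show (a) $y\notin[i+m+1,k+m-1]$ and (b) $\col(k-1)=c_0-1$, and neither is proved.
\begin{itemize}
\item Your sentence ``the new $k+m$ is placed there exactly when that neighbor is $k+m$'' has the logic backwards. If some $y\in[i+m+1,k+m-1]$ sat in column $c_0-1$ above row $r_0$, its new value $y-1<k+m$ \emph{would} free that slot. What you need is that no such $y$ can be in column $c_0-1$ at all.
\item The tool you propose does not work. With the bracketing of Definition~\ref{def:cycle}, $k-1$ need not be matched with $k+m$. In the paper's own example ($\yc{T}$ with rows $1\,2\,3/4\,5/6$, $I=[1,5]$, $k=3$) one has $\pi|_I=4\,5\,1\,2\,3$, so $5$ is matched with $1$ and $4$ with $2$.
\item In any case, a matched pair only places the $I^+$ letter in a strictly higher row of $T$; it gives no column inequality.
\item The sentence ``otherwise $\dots$ $k-1$'s position forces the same placement as before'' is not an argument.
\end{itemize}

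\textbf{What is missing.} You need column information about $I^+$, which is contained in the proof of Lemma~\ref{lemma.column separation}. Since $k$ is the unique unmatched bracket, the part of $\pi|_I$ before $k$ is balanced. Hence $[i+m+1,k+m]$ are exactly the $I^+$ letters read before $k$. They lie in distinct columns strictly left of $c_0$, increasing from left to right.

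For (a): an element of $[i+m+1,k+m-1]$ in column $c_0-1$ would push $k+m$ into a column $\geqslant c_0$. Therefore $y=k+m$.

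For (b): the balanced prefix ends with its last closed bracket, which is $k-1$. So $k+m$ lies in a row $R$ of $T$ strictly above the row $\rho$ of $k-1$, and $\col(k-1)<c_0$. Suppose $\col(k-1)<c_0-1$. Then the cell $(\rho,c_0-1)$ of $T$ exists, and its entry lies strictly between $k-1$ and $k+m$, hence in $[k+1,k+m-1]$. But $[k+1,i+m]$ lies right of column $c_0$, and $[i+m+1,k+m-1]$ lies strictly left of column $c_0-1$. This is a contradiction.

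With (a) and (b) supplied, your argument is complete and coincides with the paper's.
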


\begin{proof}
Since the leftmost column is always stable under $I$ and $\col_{\yc{T}}(k) \notin [1,c_0-1]$, the Column-Stability Lemma 
(Lemma~\ref{lem_dec_by_one}) implies that columns $1, \ldots , c_0-1$ are stable under $I$.  

\smallskip

\noindent $\Leftarrow$: 
Assume first that $\col_{\yc{T}}(k-1) = \col_{\yc{T}}(k+m) = c_0-1$ and $\row_{\yc{T}} (k+m) > r_0$.  Let $r = \row_{\yc{T}} (k+m)$.
\[{\def\Height{3}
\TIKZ[scale=.85]{
\node[left] at (-.5,.5*\Height) {$\yc{T}: $};
\filldraw[black!20] (0,0) rectangle ++(-.5,\Height);
\filldraw[pumpkin!20] (0,0) rectangle ++(1,\Height);
\filldraw[rose!20] (1,0) rectangle ++(1,\Height);
\foreach \x in {0, 1, 2}{\draw[densely dashed, black!60] (\x,0) to ++(0,\Height);}
\draw[fill=white] (1.0,.5)  rectangle node {$k$} ++(1,1);
\node[minimum size=.75cm, inner sep=0pt, draw, fill=white] (k+m) at (.5,2.25) {};
	\node at (k+m) {\footnotesize $k \! + \! m$};
	\draw[->] (k+m.north) to +(0, .25);
	\draw[->] (k+m.south) to +(0, -.25);
\node[minimum size=.75cm, inner sep=0pt, draw, fill=white] (k-1) at (.5,.75) {};
	\node at (k-1) {\footnotesize $k\!-\!1$};
	\draw[->] (k-1.north) to +(0, .25);
	\draw[->] (k-1.south) to +(0, -.25);
\draw[thick] (0, 1.5) to ++(2.6,0); 
\node[above, inner sep=1pt, pumpkin!80!black] at (.5, \Height) {\begin{tabular}{c}col\\[-3pt]$c_0 \sm 1$\end{tabular}};
\node[above, inner sep=1pt, rose!80!black] at (1.5, \Height) {\begin{tabular}{c}col\\[-3pt]$c_0$\end{tabular}};
\draw[thin, <-, black!80] (2,1) to ++(.3,0) node[right]{row $r_0$\strut};
\draw[thin, <-, black!80] (k+m.east) to +(1.35, 0) node[right]{row $r$\strut};
}}\]
Then, by column stability, $\cell_{I \cdot \yc{T}}(k+m-1) = \cell_{\yc{T}}(k+m) = (r, c_0-1)$. We claim that $k+m$ lands in row $r$ in $I \cdot \yc{T}$
when applying $\varphi^{-1}$ to $I \cdot T$: first, $k+m$ cannot be higher in $I \cdot \yc{T}$ since (1) $k$ is not higher in $\yc{T}$ and 
(2) all cells in column $c_0-1$ of $I\cdot \yc{T}$ above row $r$ are greater than the entries in the interval $I$. Conversely,  if $k+m$ appears lower than row $r$ 
in $I \cdot \yc{T}$, then the cells containing $k+m-1$ and $k+m$, together with the cell immediately to the right of $k+m-1$, 
would violate the triple condition~\eqref{equation.triple}. 
\[ \TIKZ[scale=.75]{
	\draw (0,2) rectangle +(2,1);
		\draw (1,2) to +(0,1);
		\node[Bv] (k+m-1) at (0.5, 2.5) {};
			\draw[rounded corners, ->] (.5, 3.25) node[above, inner sep = 1pt] {\footnotesize $k \! + \! m \! - \! 1$} to (k+m-1);
		\node at (1.5, 2.5) {?}; 
	\draw (1,0) rectangle +(1,1);
    		\node[] (k+m) at (1.5, .5) {\scriptsize $k \! + \! m$};  
	\draw (1.5, 1) to 
		node[fill=white, sloped, inner sep = .5pt]{\footnotesize$\leqslant$} 
		+(-1, 1);
	\draw[densely dashed, pumpkin, very thick] (1.5, 1) to +(0, 1);
	\node[right] at (2,2.5) {(row $r$)};
	\node[left, xshift=-5pt] at (0,1.5) {in $I \cdot \yc{T}$:};
}\]
Therefore $\row_{I \cdot \yc{T}}(k+m) = r>r_0$ so that 
$\cell_{I \cdot \yc{T}}(k+m) \not= (r_0,c_0)$.

\medskip

\noindent $\Rightarrow$:
For the opposite direction, assume $\cell_{I \cdot \yc{T}}(k+m) \not= (r_0,c_0)$, and recall that columns $1, \dots, c_0 - 1$ are stable under $I$. Further 
recall that values in column $c_0$ of $\yc{T}$ and $I\cdot \yc{T}$ outside of $I$ are identical, and the only entry from $I$ in column $c_0$ of
$I\cdot \yc{T}$ is $k+m$. For entries $x< \min(I)$ in column $c_0$, stability of column $c_0-1$ implies $\cell_{I \cdot \yc{T}}(x)=\cell_{\yc{T}}(x)$. 
Therefore, $(I \cdot \yc{T})(r, c_0) \geqslant \min(I)$. By $k$'s placement in $\yc{T}$, together with stability, we also know $I\cdot \yc{T}(r,c_0-1)<k$. 
Hence, $k+m$ has $(r_0,c_0)$ available in $I \cdot \yc{T}$, and our assumption that $k+m$ \emph{is not} placed here implies that it is placed strictly 
higher (i.e.\ $\row_{I \cdot \yc{T}}(k+m) > r_0$). This means there exists an entry $a$ in column $c_0-1$ of $\yc{T}$ in a row above row $r_0$ such that 
$k <a<k+m$. By properties of Dyck pattern intervals, (1) the only possible value to meet these conditions is $k+m$, and (2) $k+m$ occurring in 
column $c_0-1$ implies $k-1$ is also in column $c_0-1$. Hence $\col_{\yc{T}}(k-1) = \col_{\yc{T}}(k+m) = c_0-1$ and $\row_{\yc{T}} (k+m) > r_0$, as desired.
\end{proof}

We may now prove Proposition~\ref{prop:kmovesiffcondition1}. 

\begin{proof}[Proof of Proposition~\ref{prop:kmovesiffcondition1}]
 Set $(r_0,c_0) = \cell_{\yc{T}}(k)$.

\smallskip

\noindent $\Leftarrow$: As in Remark~\ref{YCTrows},  \ref{condition1} in Theorem~\ref{theorem.component change} implies that $\yc{T}$ has entries
\[
\TIKZ[scale =.8, font=\footnotesize]{
\draw
	(0,0) to ++(2,0)
	(0,1) to ++(2,0)
	(0,2) to ++(1,0)
	(0,0) to ++(0,2)
	(1,0) to ++(0,2)
	(2,0) to ++(0,1);
\draw[dotted] (1,2) to ++(1,0) to ++(0,-1);
\foreach \c [count=\x from 1] in {k \! - \! 1, k}
	{\node[] at (\x-.5, .5) {\strut$\c$};}
\foreach \c [count=\x from 1] in {k\! +\! m, *}
	{\node[] at (\x-.5, 1.5) {\strut$\c$};}
\node[left] at (0, 1.5){row $r_0+1$};
\node[left] at (0, .5){row $r_0$};
\node[above, inner sep=1pt] at (.5, 2) {\begin{tabular}{c}col\\[-3pt]$c_0 \sm 1$\end{tabular}};
\node[above, inner sep=1pt] at (1.5, 2) {\begin{tabular}{c}col\\[-3pt]$c_0$\end{tabular}};
}\]
which satisfy the conditions of Lemma~\ref{lemma:kmoves1}. Hence the Dyck pattern interval $I$ moves the cell of $k$.

\smallskip

\noindent $\Rightarrow$: Suppose that $I$ moves the cell of $k$. By Lemma~\ref{lemma:kmoves1}, this implies
    \[\col_{\yc{T}}(k-1) = \col_{\yc{T}}(k+m) = c_0-1
    \quad\text{and}\quad
    \row_{\yc{T}} (k+m) > r_0.
    \]
As above, let $r'= \row_{\yc{T}} (k+m)$. Further, the triple condition~\eqref{equation.triple} implies $\row_{\yc{T}}(k-1) \leqslant r_0$.  Let $x$ be the 
entry in cell $(r,c_0-2)$.  Since $k-1,x,k+m$ also satisfy the triple condition, we must have $k-1 < x < k+m$:
\[ \TIKZ[scale=.75]{
	\draw (0,2) rectangle +(2,1);
		\draw (1,2) to +(0,1);
		\node (k-1) at (0.5, 2.5) { $x$\strut};
		\node at (1.5, 2.5) {\footnotesize $k \! + \! m$\strut}; 
	\draw (1,0) rectangle +(1,1);
    		\node[] (k+m) at (1.5, .5) {\footnotesize $k \!-\! 1$\strut};
	\draw[dashed] (1.5, 1) to 
		+(-1, 1);
	\draw (1.5, 1) to node[fill=white, sloped, inner sep = .5pt]{\footnotesize$<$} +(0, 1);
	\node[right] at (2,2.5) {(row $r$)};
	\node[right] at (2,.5) {(row $r_0$).};
}\]
Since $I=[i,i+2m]$ is a Dyck pattern interval, (1) elements in $[k+1,i+m]$ occur in columns to the right of column $c_0$, and (2) elements in $[i+m+1,k+m-1]$ 
occur in strictly increasing columns to the left of column $c_0 - 1$ (see Lemma~\ref{lemma.column separation}). 
Hence $x=k+m-1$. And since $k+m-1$ cannot be paired with $k$ in the Dyck pattern
interval, we must also have $\col_{\yc{T}}(k-2) = c_0-2$:
\[ \TIKZ[scale=.75, xscale=1.5, font=\footnotesize]{
\draw (1.5,1) to ++(0,1);
\draw[myimplies, rounded corners] (1.5, 1.5) to ++(-.5,0) to ++(-.3, .5);
\draw[myimplies] (.5, 2) to  +(0, -1);
	\draw[fill=white] (0,2) rectangle +(2,1);
		\draw (1,2) to +(0,1);
		\node at (.5, 2.5) {\scriptsize $k \! + \! m\!-\! 1$\strut}; 
		\node at (1.5, 2.5) {\footnotesize $k  + m$\strut}; 
	\draw[fill=white] (0,0) rectangle +(2,1);
		\draw (1,0) to +(0,1);
    		\node[] (k-2) at (.5, .5) {\footnotesize $k -2$\strut};
    		\node[] (k-1) at (1.5, .5) {\footnotesize $k -1$\strut};
\node[above, inner sep=1pt] at (.5, 3) {\begin{tabular}{c}col\\[-3pt]$c_0 \sm 2$\end{tabular}};
\node[above, inner sep=1pt] at (1.5, 3) {\begin{tabular}{c}col\\[-3pt]$c_0 \sm 1$\end{tabular}};
	\node[right] at (2,2.5) {(row $r$)};
	\node[right] at (2,.5) {(row $r_0$).};
}\]
Continuing this line of reasoning implies that each of the first $c_0-1$ columns contains an 
entry from $[i+m+1,k+m]$ in row $r$ and an entry from $[i,k-1]$ in row $r_0$. This implies \ref{condition1} of Theorem \ref{theorem.component change}.
\end{proof}

\subsubsection{Shape change on smaller intervals}
\label{ss:smallerintervalshapechange}
Fix a Young composition tableau $\yc{T}$, Dyck pattern interval $I=[i,i+2m]$, and $k$ as in Theorem~\ref{theorem.component change}.
The next step is for us to study the shapes of the restricted skew tableaux $\yc{T}|_J$ and $I \cdot \yc{T}|_J$ for the interval $J = [1, i+2m]$. 
Our goal is to show that $\shape(\yc{T}|_J)$ and $\shape((I \cdot \yc{T})|_J)$ differ if and only if $I$ moves the cell of $k$ 
(Lemma~\ref{lemma:I on subtableau}~\eqref{lemmaI:2}). We describe how this shape changes in Lemma~\ref{lemma:I on subtableau} \eqref{lemmaI:3}.

\begin{lemma}
\label{lemma:I on subtableau}
Let $T\in \SYT(\lambda)$, $I = [i,i+2m]$ be a Dyck pattern interval, and $\cycle(I)=(k+m,k+m-1,\ldots,k)$. 
Let $\cell_{\yc{T}}(k) = (r_0,c_0)$ and set $J = [1,i+2m]$.
\begin{enumerate}
\item \label{lemmaI:rowlength} If $I$ moves the cell of $k$, then the length of the $(r_0+1)$-st row of $\yc{T}|_J$ is $c_0-1$.
\item  \label{lemmaI:3} If $I$ moves the cell of $k$,  
then $\shape((I\cdot \yc{T})|_J)$ is obtained by swapping rows $r_0$ and $r_0+1$ of $\shape(\yc{T}|_J)$.  \smallskip
\item \label{lemmaI:2}
We have that   
\[\shape(\yc{T}|_J) \neq \shape((I\cdot \yc{T})|_J)\] 
if and only if $I$ moves the cell of $k$.
\end{enumerate}
\end{lemma}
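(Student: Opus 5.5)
We prove the three parts in the order (1), (2), (3). Throughout, $\yc{T}|_J$ is built by inserting $1,\dots,i+2m$ in consecutive order (Remark~\ref{remark.consecutive order}). The cell of each entry is then determined by the cells of the smaller entries together with its column in $T$. Columns of $T$ are unchanged by $\cycle(I)$ except for the relabeling.

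\textbf{Part (1).} Suppose $I$ moves the cell of $k$. By Proposition~\ref{prop:kmovesiffcondition1}, \ref{condition1} holds. The proof of that proposition shows that row $r_0+1$ of $\yc{T}$ contains $i+m+1,\dots,k+m$ in columns $1,\dots,c_0-1$, and row $r_0$ contains $i,\dots,k$ in columns $1,\dots,c_0$ (Remark~\ref{YCTrows}). So it suffices to show that no entry $\le i+2m$ occupies cell $(r_0+1,c_0)$ of $\yc{T}$.

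Such an entry $y$ would have to exceed $k+m$, since it sits to the right of $k+m$. It would also lie in $I^+\setminus[i+m+1,k+m]$, or be larger than $i+2m$.

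Consider first $y\in I^+$ with $y>k+m$. Then $y$ is paired with some element of $[k+1,i+m]$, which lies to the right of column $c_0$ (Lemma~\ref{lemma.column separation}). But $y$ must lie weakly right of its partner in the bracketing, and column $c_0$ is strictly left of that partner, a contradiction. Entries larger than $i+2m$ are excluded from $J$ by definition. Hence row $r_0+1$ of $\yc{T}|_J$ has length exactly $c_0-1$.

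\textbf{Part (2).} Use column stability. Columns $1,\dots,c_0-1$ are stable by Lemma~\ref{lem_dec_by_one}, and in $I\cdot\yc{T}$ the entry $k+m$ moves to $(r_0+1,c_0)$ by Lemma~\ref{lemma:kmoves1}. In $I\cdot\yc{T}$, row $r_0+1$ then has $c_0$ cells and row $r_0$ has $c_0-1$ cells through column $c_0$.

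Next, insert the entries of $J$ in columns $>c_0$ one at a time. The claim to propagate is that $I\cdot\yc{T}$ agrees with $\yc{T}$ after interchanging the contents of rows $r_0$ and $r_0+1$ in columns $>c_0$. The justification is that, column by column, the relative order of entries is preserved under $\cycle(I)$ for entries other than $k$. Each entry lands in the highest available row whose left neighbour is smaller. The left neighbours in rows $r_0$ and $r_0+1$ at column $c_0$ are $k$ and $\ast$ in $\yc{T}$; in $I\cdot\yc{T}$ they become $k-1$ (in row $r_0$) and $k+m$ (in row $r_0+1$).

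One must check that an entry placed in row $r_0$ of $\yc{T}$ at column $c_0+1$ goes to row $r_0+1$ in $I\cdot\yc{T}$, and vice versa. For this, use part (1): row $r_0+1$ of $\yc{T}|_J$ stops at column $c_0-1$, so nothing from $J$ sits in row $r_0+1$ beyond column $c_0-1$ in $\yc{T}$. Hence only the row-$r_0$ tail moves, to row $r_0+1$. Every other column stays stable by an argument like that of Lemma~\ref{lem_dec_by_one}, now applied with the swapped rows. This yields the row swap of shapes.

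\textbf{Part (3).} The ``if'' direction follows from (2) together with (1). Row $r_0$ of $\yc{T}|_J$ has length at least $c_0>c_0-1$, which is the length of row $r_0+1$, so swapping the two rows changes the shape. For ``only if'', suppose $I$ does not move the cell of $k$. Then columns $1,\dots,c_0$ are stable: the earlier columns by Lemma~\ref{lem_dec_by_one}, and column $c_0$ because $k$ and all its other entries stay put, arguing as in the proof of Lemma~\ref{lemma:kmoves1}. Applying Lemma~\ref{lem_dec_by_one} for the later columns requires $k$ to be absent from column $c-1$, which fails at $c=c_0+1$. That step needs a direct argument: $k$ and $k+m$ occupy the same cell, and the other entries keep their relative order, so the placement rule gives the same rows.

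\textbf{Main obstacle.} The hardest step is the propagation in (2) past column $c_0$, where the row swap must be carried along. Here one must control the entries from $[k+1,i+2m]$, which shift by $-1$, against $k+m$, which now sits in column $c_0$. I would handle this by an explicit induction on columns, tracking the two rows $r_0$ and $r_0+1$.
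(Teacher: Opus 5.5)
\textbf{Gap in parts (2) and (3).} Your treatment of the columns to the right of $c_0$ rests on a stability claim that is false. In (2) you propagate the claim that $I\cdot\yc{T}$ is $\yc{T}$ with the contents of rows $r_0$ and $r_0+1$ interchanged past column $c_0$, with every other cell keeping its (relabelled) entry. In the ``only if'' half of (3) you assert that the placement rule gives the same rows.

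The relative-order argument breaks at exactly the comparison you do not control. The letter in column $c_0$ changes from $k$ to $k+m$, while $k+1$ becomes $k$, so their order flips. Whenever $k+1$ occupies the cell $(r_0,c_0+1)$ of $\yc{T}$, the letter $k+m+1$ lies in the same column, in a row $r_1<r_0$. Under $I$, the image $k$ of $k+1$ lands in row $r_1$. Meanwhile $k+m+1$ moves up to row $r_0+1$ if the cell of $k$ moves, or to row $r_0$ if it does not.

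Two concrete cases show this:
\begin{itemize}
\item Take $\yc{T}$ with rows $(1,2,7),(3,4,5),(6)$ from bottom to top, and $I=[3,7]$, $k=4$. Then $I\cdot\yc{T}$ has rows $(1,2,4),(3),(5,6,7)$, so row $1$ changes.
\item Take $\yc{T}$ with rows $(1,2,6),(3,4,5)$ and $I=[4,6]$, $k=4$. The cell of $k$ does not move, yet $I\cdot\yc{T}$ has rows $(1,2,4),(3,5,6)$, so column $3=c_0+1$ is not stable.
\end{itemize}
In both examples the shapes come out as claimed, but only because the two affected cells in each such column stay occupied. An induction that tracks only rows $r_0,r_0+1$ and asserts stability everywhere else cannot establish this.

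\textbf{What is missing.} You need the bookkeeping from the paper's proof. Let $A_2=[k+1,\dots]$ be the maximal run of $I^-$ letters in consecutive columns $c_0+1,\dots,c_1$ of row $r_0$. The matching letters $B_2=[k+m+1,\dots]$ sit in the same columns, in rows $r_j<r_0$. Under $I$, each $a_j-1$ takes the cell of $b_j$, while $b_j$ moves into row $r_0+1$ (or into row $r_0$ in the non-moving case). Past column $c_1$, the remaining $I^-$ letters keep their cells, the $I^+$ letters in row $r_0$ move to row $r_0+1$, and the others stay put. So the invariant must concern which cells are occupied, not their contents.

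\textbf{A smaller issue in (1).} You claim that an $I^+$ letter lies weakly right of its bracket partner; this is false. In the paper's example with $I=[6,12]$, the letter $11$ in column $4$ is bracketed with $9$ in column $5$. The fact you actually need is true: $[k+m+1,i+2m]$ lies strictly right of column $c_0$. It should come from the argument of the column-separation lemma. The $I^+$ letters read before $k$ are exactly $[i+m+1,k+m]$, so $k+m+1>k$ is read after $k$. It therefore sits in a column strictly right of $c_0$.
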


\begin{proof}
Recall that $\cycle(I) \cdot k = k+m$.

\medskip

\noindent \textit{Proof of parts \eqref{lemmaI:rowlength} and  \eqref{lemmaI:3}}: 
By assumption, $I$ moves the cell of $k$. By Proposition~\ref{prop:kmovesiffcondition1} we have that rows $r_0$ and $r_0+1$ are as described 
in Remark~\ref{YCTrows}. Since $\cycle(I) \cdot k = k+m$ and $\cycle(I) \cdot (k+m) = k+m-1$, it follows that in $I \cdot \yc{T}$, we have
$\cell_{I \cdot \yc{T}}(k+m) = (r_0+1,c_0)$. 

Define the following sets:
\begin{equation*}
    A_1:= [i,k] \qquad \text{and} \qquad B_1:= [i+m+1,k+m].
\end{equation*}Proposition~\ref{prop:kmovesiffcondition1} also implies that the letters in $A_1$ occur in the first 
$|A_1| = k-i +1$ columns of $\yc{T}$ with $k>i$ and the letters $B_1$ appear in the first $|A_1|-1 = k-i$ columns of $\yc{T}$. Furthermore,
$\yc{T}$ and $I \cdot \yc{T}$ are identical over the interval $[1,i-1]$ (a trivial case of Lemma~\ref{lemmaI:skew shape stability}). We next examine what 
happens to the remainder of $J$ in $I \cdot \yc{T}$ compared to $\yc{T}$.

By Lemma~\ref{lemma.column separation}, the elements in $[k,k+m]$ (which are the elements in $\cycle(I)$) appear in distinct columns of
$\yc{T}$. By the definition of $k$, the letter $k$ is the only element from $I$ in column $c_0$ of $\yc{T}$. Hence the letters in $[k+1,i+m]$ and
$[k+m+1,i+2m]$ appear in columns to the right of column $c_0$.  

For the following, see the left-hand side of Figure~\ref{fig:how I moves A2 and B2}: Let $c_1 \geqslant 0$ be maximal such that $c_1+i-1$ appears in 
column $c_1$ (i.e.\ the rightmost column where entries within $[k+1,i+m]$ 
appear in consecutive columns). Define 
\begin{equation*}
    A_2:= [k+1,c_1+i-1] \qquad \text{and} \qquad B_2:= [m+k+1,m+c_1+i-1],
\end{equation*}
and set $a_j:= k+j \in A_2$, $b_j = k+m+j \in B_2$ for $1 \leqslant j \leqslant c_1-c_0$.
By Remark~\ref{remark.outpacing}, since the entries in $A_2$ appear in consecutive columns between $c_0+1$ and $c_1$, the entries in $B_2$ must as well.
By the construction of $\varphi^{-1}$ in Definition~\ref{def.phi},
\[
	\row_{\yc{T}}(a_j)=r_0 \quad \text{and} \quad r_j:=\row_{\yc{T}}(b_j)<r_0
\]
since all entries above row $r_0$ are greater than $k$ by Remark~\ref{YCTrows} and the fact that the first column of $\yc{T}$ from bottom to top is increasing.  

Recall that we use the notation $x'=\cycle(I) \cdot x$ for every $x$ in $T$ (e.g. $k' = k+m$). Then
\[
	a_j'=a_j-1=k+j-1 \quad \text{and} \quad b_j'=b_j=k+m+j \quad \text{for $1\leqslant j \leqslant c_1-c_0$.}
\]
Since $\cell_{I \cdot \yc{T}}(k+m) = (r_0+1,c_0)$, we must have $\row_{I \cdot \yc{T}}(b_j')=r_0+1$ for $1\leqslant j \leqslant c_1-c_0$. Furthermore, 
$\row_{I \cdot \yc{T}}(a_j')=r_j$ since $r_j$ is the highest row of $I \cdot \yc{T}$ whose cell in column $c_0+j-1$ is less than $a_j'$. 
See Figure~\ref{fig:how I moves A2 and B2}. 

Next, set 
\begin{equation*}
    A_3 := [i,i+m] \setminus A_1 \cup A_2=[i+c_1,i+m],
\end{equation*}
as illustrated in the left-hand side of Figure~\ref{fig:how I moves I}.
By the definition of $c_1$, the entries in $A_3$ appear strictly to the right of column $c_1$. Hence by  the Column-Stability
Lemma~\ref{lem_dec_by_one}
\[
	\cell_{\yc{T}}(x) = \cell_{I\cdot \yc{T}}(x') \quad \text{for $x\in A_3$.}
\]
Finally, the letters in 
\[ [i,i+2m] \setminus A_1 \cup B_1 \cup A_2 \cup B_2 \cup A_3 = [k+m+c_1-c_0+1,i+2m]=[i+c_1+m,i+2m]\] also appear in columns 
$c_1+1$ or larger. Set $B_3$ to be the subset of elements in $[i+c_1+m,i+2m]$ appearing in row $r_0$ of $\yc{T}$, and $B_4$ to be those appearing 
in a row other than $r_0$:
\begin{align*}
B_3 := \{j \in [i+c_1+m,i+2m] ~|~ \row_{\yc{T}}(j) = r_0\},\\
B_4 := \{j \in [i+c_1+m,i+2m] ~|~ \row_{\yc{T}}(j) < r_0\}.
\end{align*}
Then it follows that for $x \in B_3$ and $y \in B_4$ 
\[ 
	\row_{I \cdot \yc{T}}(x) = r_0+1, \qquad \qquad \textrm{ and } \qquad \qquad \cell_{\yc{T}}(y) = \cell_{I \cdot \yc{T}}(y),
\]
since $x \in B_3$ is larger than any $z \in B_2$.  Since $k$ is the only entry in column $c_0$ of $\yc{T}$ from the interval $I$, row $r_0+1$ of $\yc{T}|_J$ 
has length $c_0-1=k-i$.  This is part \eqref{lemmaI:rowlength} of the lemma.

Since $k+m=k'$ is the only entry 
in $I \cdot \yc{T}$ in column $c_0$ from the interval $I$, no entry from $J$ will be placed in cell $(r_0,c_0)$ of $I \cdot \yc{T}$. So the length of row $r_0$ of 
$(I \cdot \yc{T})|_J$ equals the length of row $r_0+1$ of $\yc{T}|_J$. Hence $\shape((I\cdot \yc{T})|_J)$ is obtained by swapping rows $r_0$ and $r_0+1$ of 
$\shape(\yc{T}|_J)$. Again, see Figure~\ref{fig:how I moves I}.

\medskip

\noindent \textit{Proof of part \eqref{lemmaI:2}}:
The proof of \eqref{lemmaI:3} showed that if $I$ moves the cell of $k$, then $\shape(\yc{T}|_J) \neq \shape((I\cdot \yc{T})|_J)$. So we now assume that $I$ does not move the cell of $k$.

Let $a_1 < a_2 < \cdots < a_{d}$ be the non-empty entries in row $r_0$ of $\yc{T}|_J$ occurring to the right of column $c_0$.  If $a_1>k+m$, then 
column $c_0+1$ is stable under $I$ and the Column-Stability Lemma~\ref{lem_dec_by_one} implies that the remainder of the columns are stable under $I$.  

If $a_1 \in [k+1,k+m-1]$, we must have $a_1=k+1$ and there must be an entry $b_1>a_1$ in column $c_0+1$ with $b_1 \in I$ since $a_1$ is paired 
by Remark~\ref{remark.outpacing}. Let $r_1=\row_{\yc{T}}(b_1)$.  The triple rule implies that $r_1 < r_0$.  Then 
\[ 
	\row_{I \cdot \yc{T}}(a_1')=r_1 \qquad \qquad \textrm{ and } \qquad \qquad \row_{I \cdot \yc{T}}(b_1')=r_0. 
\]  
Repeating the argument, either $a_j>k+m$ and we can use Lemma~\ref{lem_dec_by_one} or $a_j=k+j$ and there is a $b_j>a_j$ in cell $(r_j, c_0+j)$
with $b_j\in I$ by Remark~\ref{remark.outpacing}. By the triple rule $r_j<r_0$ and
\[ 
	\row_{I \cdot \yc{T}}(a_j')=r_j \qquad \qquad \textrm{ and } \qquad \qquad \row_{I \cdot \yc{T}}(b_j')=r_0. 
\]   
The other entries in columns $c_0+1$ through $c_0+d$ appear in the same relative order, and so the lengths of the other rows do not change.  
Therefore the shape does not change and our proof is complete.  
\end{proof}

\begin{figure}
\[
\TIKZ[scale = .4, font=\small]{
\ABtwoCore
\draw[Bstrip, very thick, fill=Bstrip!20] 
	(x1) rectangle ++(4,1) to
	(x6) rectangle ++(3,1) to
	(x9) rectangle ++(3,1);
\draw [densely dotted, Bstrip] (1, -.5) to (1,0) (11, -.5) to (11,8);
\draw[|-|, Bstrip] (1, -.5) to node[below, Bstrip!60!black]{$B_2$} ++(10,0);
\draw[rose, very thick, fill=rose!30] (row r) rectangle coordinate (k) ++(1,1);
\path (row r) to ++(1,0) to ++(2pt,0)  coordinate (A2) to ++(10,1) to ++(-2pt,0) coordinate (endA2);
\draw[Astrip, very thick, fill=Astrip!30] 
	(A2) rectangle coordinate(A2label) (endA2);
\node[rose!60!black] at (k) {$k$};
\node[Astrip!60!black] at (A2label) {$A_2$};
\node[left] at (0,10) {row $r_0$};
\node[right, rotate=90] at (.5,10.5) {col $c_0$};
\node[right, rotate=90] at (10.5,10.5) {col $c_1$};
\node[draw, rounded corners] at (T) {\normalsize $\yc{T}$};
\draw[thick, ->] (12, 5) to node[above]{\normalsize $I$} ++(7,0);
\begin{scope}[shift={(21,0)}]
\ABtwoCore
\draw[Astrip, very thick, fill=Astrip!30] 
	(x1) rectangle ++(4,1) to
	(x6) rectangle ++(3,1) to
	(x9) rectangle ++(3,1);
\draw [densely dotted, Astrip] (1, -.5) to (1,0) (11, -.5) to (11,8);
\draw[|-|, Astrip] (1, -.5) to node[below, Astrip!60!black]{$I \cdot A_2$} ++(10,0);
\draw[rose, very thick, fill=rose!30] (row r) rectangle coordinate (k) ++(1,1);
\path (row r) to ++(1,0) to ++(2pt,0)  coordinate (A2) to ++(10,1) to ++(-2pt,0) coordinate (endA2);
\draw[Bstrip, very thick, fill=Bstrip!20] 
	(A2) rectangle coordinate(A2label) (endA2);
\draw[rose!60!black, <-, bend right] 
	(k) node[Bv, rose]{} to ++(1, -1.5) 
	node[right, rose!60!black, inner sep=2pt]{$k+m$};
\node[Bstrip!60!black] at (A2label) {$B_2$};
\node[left] at (0,10) {row $r_0+1$};
\node[right, rotate=90] at (.5,10.5) {col $c_0$};
\node[right, rotate=90] at (10.5,10.5) {col $c_1$};
\node[draw, rounded corners] at (T) {\normalsize $I \cdot \yc{T}\vphantom{T_x}$};
\end{scope}
}
\]
\caption{The local effect of $I$ on $A_2=[k+1,k+c_1-c_0]$ and $B_2=[k+m+1,k+m+c_1-c_0]$ in $\yc{T}|_{[1, i+2m]}$, focusing only on columns $c_0$ through 
$c_1$ and relevant rows: rows from $r_1=\row_{\yc{T}}(k+m+1)$ to $r_{c_1-c_0}=\row_{\yc{T}}(k+m+c_1-c_0)$, and either row $r_0$ (in $\yc{T}$) or $r_0+1$ 
(in $I \cdot \yc{T}$). Gray regions represent boxes in $\yc{T}|_{[1, i-1]}$ and $(I \cdot \yc{T})|_{[1, i-1]}$.}
\label{fig:how I moves A2 and B2}
\end{figure}

\begin{figure}
\[\TIKZ{
\node (T) at (0,0){\suffYT};
\node (IT) at (8.5,0){\suffIYT};
\node[draw, rounded corners] at (-0.5,2.3) {\normalsize $\yc{T}|_J$};
\node[draw, rounded corners] at (8,2.3) {\normalsize $(I \cdot \yc{T})|_J$};
\draw[->] (T) to node[above]{$I$} (IT);}\]
\caption{An illustration of the proof of Lemma \ref{lemma:I on subtableau} part \eqref{lemmaI:3}; on the left is $\yc{T}|_J$ where $J=[1,i+2m]$, $I = [i,i+2m]$,
 $I^-=[i, i+m] = \{k\} \cup \bigcup A_j$ and $I^+=[i+m+1, i+2m] = \bigcup B_j$; on the right is $(I\cdot \yc{T})|_J$.}\label{fig:how I moves I}
\end{figure}

\subsubsection{Proof of Theorem~\ref{theorem.component change}}
\label{ss:proofofcomponentchange}

We are at last ready to characterize when edges in the crystal skeleton change quasicrystal skeleton components
by proving Theorem \ref{theorem.component change}.  Our proof will use repeated applications of the triple condition~\eqref{equation.triple}.  
We remind the reader of the triple condition for standard Young tableaux from \eqref{equation.triple}: 
\[ \TIKZ[scale=.5]{
	\draw (0,2.5) rectangle +(2,1);
		\draw (1,2.5) to +(0,1);
		\node at (0.5, 3) {$x$};
		\node at (1.5, 3) {$y$}; 
	\draw (1,0) rectangle +(1,1);
    		\node at (1.5, .5) {$z$};  
	\draw (1.5, 1) to 
		 node[fill=white, sloped, inner sep = .5pt]{\footnotesize$<$} 
		+(-1, 1.5);
	\draw[densely dashed] (1.5, 1) to
        +(0, 1.5);
}
 \qquad\qquad 
 \text{If $z>x$, then $z>y$.}\]

From Lemma~\ref{lemma:I on subtableau}, we know that we cannot expect column stability, even outside of rows $r_0$ and $r_0+1$, since the entries from
$B_2$ in $\yc{T}$ are replaced by $I \cdot A_2$ in $I \cdot \yc{T}$ (see Figure~\ref{fig:how I moves I}). However, we can still describe how entries in $\yc{T}$ are changed by applying $I$ when \ref{condition1} is met.

\begin{lemma}\label{Gen_stability_lemma}
Assume \ref{condition1} is met.  The following are true for all columns $c$.
\begin{enumerate}[label=(\roman*), ref={(\roman*)}]
\item~\label{semistable1} Let $r \not= r_0, r_0+1$.
\begin{enumerate}
\item If $\yc{T}(r,c) \notin I$, then $\yc{T}(r,c) = (I \cdot \yc{T})(r,c)$.
\item If $\yc{T}(r,c) \in I$, then $(I \cdot \yc{T})(r,c) \in I$.
\end{enumerate}
\item~\label{semistable2} For $r = r_0, r_0+1$ we have
\[(I \cdot \yc{T})(r_0, c), (I \cdot \yc{T})(r_0+1, c) \in \{\yc{T}(r_0, c), \yc{T}(r_0+1, c)\} \cup I.\]
\end{enumerate}
\end{lemma}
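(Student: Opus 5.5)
The plan is to run the insertion $\varphi^{-1}$ on $T$ and on $I\cdot T$ simultaneously, one letter at a time in increasing order (Remark~\ref{remark.consecutive order}), and compare the two partial fillings as they grow. Since $T$ and $I\cdot T$ agree on $[1,i-1]$, the fillings $\yc{T}|_{[1,i-1]}$ and $(I\cdot\yc{T})|_{[1,i-1]}$ coincide. For the interval $J=[1,i+2m]$ we use the explicit description from the proof of Lemma~\ref{lemma:I on subtableau}, which holds because \ref{condition1} is equivalent to $I$ moving the cell of $k$ (Proposition~\ref{prop:kmovesiffcondition1}).

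That description gives the following picture.
\begin{itemize}
\item Letters of $A_1$ and of $A_3$, $B_4$ stay in their rows.
\item $B_1$ and $B_3$ move between rows $r_0+1$ and $r_0$.
\item Each $a_j'$ moves into row $r_j$, where $b_j$ used to be.
\item $k+m$ takes the cell $(r_0+1,c_0)$.
\end{itemize}
This already establishes both \ref{semistable1} and \ref{semistable2} on $J$. In rows $r\neq r_0,r_0+1$, every cell of $(I\cdot\yc{T})|_J$ has the same entry as in $\yc{T}|_J$ if that entry lies outside $I$. If the entry lies in $I$, it is replaced by another letter of $I$, namely $b_j\mapsto a_j'$ or $y\in B_4$ fixed. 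In rows $r_0,r_0+1$, only the first $c_0-1$ columns carry non-$I$ entries of $[1,i-1]$, and these are common to both fillings.

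Next, extend to the letters $x\in[i+2m+1,n]$, which are fixed by $\cycle(I)$ and lie in the same columns of $T$ and $I\cdot T$. I will prove by induction on $x$ the following invariant. After inserting $[1,x]$, the two partial fillings agree cell-by-cell on every row other than $r_0,r_0+1$, except at cells whose entries both lie in $I$. Moreover, on the rows $r_0,r_0+1$ they agree up to swapping the two rows entrywise among non-$I$ entries.

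For the inductive step, insert $x$ into its column $c$. Its position is the highest unoccupied cell whose left neighbor is less than $x$. Since $x>\max I$, every entry of $I$ is less than $x$, so cells whose left neighbor lies in $I$ look identical to $x$ in both fillings. Cells whose left neighbor lies outside $I$ coincide by the invariant. Hence $x$ either lands in the same row in both fillings, or lands in row $r_0$ in one and $r_0+1$ in the other.

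The hardest step is the possible mismatch between rows $r_0$ and $r_0+1$. After the swap, the lengths of these rows differ, so an entry might fit in row $r_0$ of one filling but in some other row of the other, which would break the invariant. To handle this I will compare row lengths directly. Right after $J$, row $r_0$ of $I\cdot\yc{T}$ has length $c_0-1$, and row $r_0+1$ has the old length of row $r_0$. The triple condition~\eqref{equation.triple} shows that, in each column, the entries in rows $r_0$ and $r_0+1$ exchange places rather than escaping to other rows. Concretely, I would check that whenever row $r_0$ of one filling ends while row $r_0+1$ continues, the available slot is determined only by which of these two rows has a smaller left neighbor, and that this comparison is symmetric under the swap. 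If needed, I will split into the cases $c\le c_1$ and $c>c_1$, where $c_1$ is as in Lemma~\ref{lemma:I on subtableau}, and apply the Column-Stability Lemma~\ref{lem_dec_by_one} style reasoning to the columns beyond $c_1$.
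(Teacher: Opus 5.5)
Your framework matches the paper's: insert letters in increasing order into both fillings, take the letters of $J=[1,i+2m]$ from Lemma~\ref{lemma:I on subtableau}, then show each $x>i+2m$ lands in the same row in both fillings or is exchanged between rows $r_0$ and $r_0+1$. The gap is that this last step, which is the whole content of the lemma, is never proved.

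You first assert it (``Hence $x$ either lands\dots''). You then concede that $x$ might fit in row $r_0$ of one filling and in some other row of the other, and offer only a plan. That plan would not work as described. Columns $c_0$ through $c_2$ are not stable under $I$; for instance, $B_3$ moves from row $r_0$ to row $r_0+1$. Later columns can still have their entries in rows $r_0,r_0+1$ exchanged. So the hypothesis of Lemma~\ref{lem_dec_by_one} fails beyond $c_1$. Your invariant is also false if read as a swap of the two whole rows. It must be stated column by column, with letters of $I$ treated as interchangeable.

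The missing idea is a simple count. In consecutive insertion every entry already placed is smaller than $x$. So for $c\geqslant 2$ (column $1$ is stable), $x$ goes to the highest row $r$ with $(r,c-1)$ occupied and $(r,c)$ empty, and only the current partial shapes matter. Take as invariant:
\begin{enumerate}
\item the two partial shapes agree outside rows $r_0,r_0+1$;
\item in every column, they have the same number of occupied cells among rows $r_0,r_0+1$.
\end{enumerate}
This holds after $J$: both cells are occupied in columns $<c_0$, exactly one in columns $c_0,\dots,c_2$, and none afterwards. Since an occupied $(r,c)$ forces $(r,c-1)$ to be occupied, the available rows in column $c$ coincide outside $\{r_0,r_0+1\}$ and are equinumerous inside it. Hence the highest available row is either the same row in both fillings or lies in $\{r_0,r_0+1\}$ in both, and the invariant propagates. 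This is what the paper extracts from its sets $\mathcal{R},\mathcal{R}'$ and the bijection, via $\cycle(I)$, of the column-$c$ entries below $y$.

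Two smaller slips in your description of $J$ are harmless but should be fixed. $B_1$ does not change rows, because columns $1,\dots,c_0-1$ are stable. Rows $r_0,r_0+1$ of $\yc{T}|_J$ contain no letters of $[1,i-1]$ at all, since they begin with $i$ and $i+m+1$; this makes property (ii) on $J$ immediate.
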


Note that if a column is stable under $I$ (in the sense of Lemma~\ref{lem_dec_by_one}), then it satisfies both properties; in this sense, Lemma~\ref{Gen_stability_lemma} is a generalization of Lemma~\ref{lem_dec_by_one}. One can 
visualize Lemma~\ref{Gen_stability_lemma} as in Figure~\ref{fig:semistable diagram}.

\begin{figure}[t]
\[\TIKZ{
\node (T) at (0,0){\semistableYT};
\node (IT) at (9,0){\semistableIYT};
\draw[->] (T) to node[above]{$I$} (IT);}\]
\caption{Visualization of stability-like conditions stated in Lemma~\ref{Gen_stability_lemma}.}
\label{fig:semistable diagram}
\end{figure}

\begin{proof}
First note that since $i<k$, the leftmost column of $\yc{T}$ has $\yc{T}(r_0,1)=i$, $\yc{T}(r_0+1,1)=i+m+1$, and all other entries are outside of the interval
$I$.  Hence applying $I$ fixes all values in the leftmost column except the entry in row $r_0+1$, which becomes $i+m$. Therefore $(I \cdot \yc{T})(r_0+1,1)
=\yc{T}(r_0+1,1)-1$. This implies that the leftmost column of $\yc{T}$ is stable under $I$, which implies that it satisfies properties~\ref{semistable1} 
and~\ref{semistable2} since stability under $I$ is a special case of these properties.

Proceed by induction on the columns of $\yc{T}$, with the leftmost column as the base case.  Assume column $c-1$ satisfies the properties.  
For $y \in J=[1,i+2m]$ in column $c$, properties~\ref{semistable1} and \ref{semistable2} are satisfied by Lemma~\ref{lemma:I on subtableau}.  

Now fix $y > i+2m$ in column $c$ and assume all $z<y$ satisfy properties~\ref{semistable1} and \ref{semistable2}.  Note that $\cycle(I) \cdot y = y$, 
and let $r_y := \row_{I\cdot \yc{T}}(y)$. 
Our goal is to show that either $\row_{\yc{T}}(y) = r_y$, or  both $r_y$ and  $\row_{\yc{T}}(y)$ are in $\{r_0, r_0+1\}$. 

\def\GoodRowsT{\mathcal{R}}
\def\GoodRowsIT{\mathcal{R}'}

Since the rows of a standard Young composition tableau must increase when read left to right, the entry immediately to the left of $y$ must be less 
than $y$.  Let $\GoodRowsT$ (resp. $\GoodRowsIT$) be the set of rows of $\yc{T}$ (resp. $I \cdot \yc{T}$) containing an entry in column $c-1$ 
which is less than $y$; that is, 
\[
	\GoodRowsT := \left\{ r ~\left|~ \yc{T}(r, c-1) < y\right.\right\} 
	\quad \text{ and }\quad 
	\GoodRowsIT := \left\{ r' ~\left|~ (I \cdot \yc{T})(r', c-1) < y\right.\right\}.
\]  
By the assumptions placed on column $c-1$, we have 
\[
	\GoodRowsT \setminus \{r_0, r_0+1\} = \GoodRowsIT \setminus \{r_0, r_0+1\}
 	\quad \text{and} \quad |\GoodRowsT \cap \{r_0, r_0+1\}| = |\GoodRowsIT \cap \{r_0, r_0+1\}|.
\]
Namely, either $\GoodRowsT = \GoodRowsIT$ or they differ only in rows $r_0$ and $r_0+1$.  If they differ in rows $r_0$ and $r_0+1$, then 
$|\GoodRowsT \cap \{r_0, r_0+1\}| = |\GoodRowsIT \cap \{r_0, r_0+1\}|$ so that  either 
\begin{itemize}
    \item $r_0 \in \GoodRowsT$ and $r_0+1 \in \GoodRowsIT$, or 
    \item $r_0+1 \in \GoodRowsT$ and $r_0 \in \GoodRowsIT$.
\end{itemize}

First assume $r_y \notin \{r_0, r_0+1\}$ and let $r_1 := \row_{\yc{T}}(y)$.  If $r_1 < r_y$, then in $\yc{T}$ by the triple rule, we have 
\[
\TIKZ[scale=.5]{
\Triple{x}{z}{y}
\begin{scope}[font=\footnotesize, color=black!70]
\node[right] at (top row) {row $r_y$\strut};
\node[right] at (bot row) {row $r_1$\strut};
\node[above, inner sep=1pt] at (1.5, \rowR+1) {\begin{tabular}{c}col\\[-3pt]$c$\end{tabular}};
\end{scope}
}
\]
and the entry $z := (I \cdot \yc{T})(r_y,c)$ is less than $y$.  Hence $z$ was inductively placed under $\varphi^{-1}$ as expected.  This implies that either $z \in I$ 
(implying $(I \cdot \yc{T})(r_y,c) \in I$) or $z \notin I$ in which case $(I \cdot \yc{T})(r_y,c) = z$.  Either way, this contradicts our assumption that 
$(I \cdot \yc{T})(r_y,c) =y$ since $y \notin I$. Therefore $r_1 \geqslant r_y$.  If $r_1=r_y$ we are done. Hence we may assume that $r_1>r_y$.

For all $r' \in \GoodRowsIT$ with $r'>r_y$, in $I \cdot \yc{T}$ by the triple rule
\[\TIKZ[scale=.5]{
\Triple{x}{z}{y}
\begin{scope}[font=\footnotesize, color=black!70]
\node[right] at (top row) {row $r'$\strut};
\node[right] at (bot row) {row $r_y$\strut};
\node[above, inner sep=1pt] at (1.5, \rowR+1) {\begin{tabular}{c}col\\[-3pt]$c$\end{tabular}};
\end{scope}
}\]
the entry  $z:= (I \cdot \yc{T})(r', c)$ is less than $y$. Hence $z$ was inductively placed under $\varphi^{-1}$ as expected, meaning that we have one of the 
following cases (aligning with properties \ref{semistable1} and \ref{semistable2} above, respectively): 
\begin{enumerate}[(i')]
\item If $r' \ne r_0, r_0+1$, then either $z\notin I$ or $z \in I$.  If $z \notin I$, then $\yc{T}(r',c)=z$. If $z \in I$, then $\yc{T}(r',c) \in I$.  Either way, $\yc{T}(r',c) \not= y$ and so this case cannot happen.  

\item If $r' \in \{r_0, r_0+1\}$ and $z \notin I$, then either $\yc{T}(r_0,c)=z$ or $\yc{T}(r_0+1,c)=z$.  If $r' \in \{r_0, r_0+1\}$ and $z \in I$, then 
$\yc{T}(r_0,c) \in I$ or $\yc{T}(r_0+1,c) \in I$.
\end{enumerate}
Therefore, since by assumption $r_1=\row_{\yc{T}}(y) >r_y$, we have $\row_{\yc{T}}(y) \in \{ r_0, r_0+1 \}$.  Since $y \notin I$, the sets of values in 
column $c$ of $\yc{T}$ and of $I \cdot \yc{T}$ that are less than $y$ are in bijection via 
\[
	\{ z ~|~ \col_{\yc{T}}(z) = c  \text{ and }  z < y\} \xrightarrow{~\cycle(I)} 
	\{ z' ~|~ \col_{I \cdot \yc{T}}(z') = c  \text{ and }  z' < y\}.
\] 
Hence, for all $r' \in \GoodRowsT \setminus \{r_0, r_0+1\} =  \GoodRowsIT \setminus \{r_0, r_0+1\}$ with $r' > r_y$, we have  
\[
	\yc{T}(r',c)<y \quad \text{ if and only if } \quad (I \cdot \yc{T})(r',c)<y.
\]
Furthermore the number of cells in rows $r_0$ and $r_0+1$ of a fixed column $c$ containing values less than $y$ is the same in both $\yc{T}$ and in 
$I \cdot \yc{T}$.  Hence if $\row_{\yc{T}}(y) \in \{r_0, r_0+1 \}$ we must have $\row_{I \cdot \yc{T}}(y) \in \{r_0, r_0+1 \}$ for otherwise we would have 
too many entries above row $r_y$ in column $c-1$ that are less than $y$.  Therefore $y$ is either in row $r_y$ of $\yc{T}$, or moves between rows 
$r_0$ and $r_0+1$ under $I$. 
\end{proof}

\begin{proof}[Proof of Theorem~\ref{theorem.component change}]
Set $\alpha:=\shape(\yc{T})$ and $\beta:=\shape(I \cdot \yc{T})$.

First note that if \ref{condition1} is not met, then by Proposition~\ref{prop:kmovesiffcondition1} and Lemma~\ref{lemma:I on subtableau} \eqref{lemmaI:2}, 
$\shape(\yc{T}|_J) = \shape(I \cdot \yc{T}|_J)$ for $J = [1, i+2m]$. Thus by Lemma~\ref{lemmaI:skew shape stability}, $\alpha = \beta$.

Now suppose that \ref{condition1} is met, so that by Proposition~\ref{prop:kmovesiffcondition1}, we know $I$ moves the cell of $k$. It remains to 
show that $\alpha \neq \beta$ if and only if \ref{condition2} is met.  To that end, consider what happens when we build $I \cdot \yc{T}$ 
column-by-column under the map $\varphi^{-1}$.

Lemma~\ref{Gen_stability_lemma} implies that all columns of $\yc{T}$ satisfy properties \ref{semistable1} and \ref{semistable2}. Thus, the remaining work is 
to describe the behavior under $I$ in rows $r_0$ and $r_0+1$.  Recall that $\cell_{\yc{T}}(k) = (r_0,c_0)$. In columns $c< c_0$, we have column stability from 
Lemma~\ref{lem_dec_by_one}. For columns $c \geqslant c_0$, parts \eqref{lemmaI:rowlength} and \eqref{lemmaI:3} of Lemma~\ref{lemma:I on subtableau} 
imply that 
\begin{itemize}
    \item $\yc{T}(r_0+1,c) \notin I$, and
    \item  $\yc{T}(r_0,c)  \in I$ if and only if $(I \cdot \yc{T})(r_0+1,c)  \in I$.
\end{itemize}
Hence, for $c \geqslant c_0$, by property \ref{semistable2} the entries in rows $r_0$ and $r_0+1$ either remain fixed under $I$ or the entry in row 
$r_0+1$ moves to row $r_0$ and either the entry from row $r_0$ or an entry from $I$ is placed into row $r_0+1$.

When $c=c_0$, we specifically have in $\yc{T}$ and $I \cdot \yc{T}$ respectively
\begin{equation}
\label{equation.col c0}
\TIKZ[xscale=.75, yscale=.6, font=\small]{
	\draw (0,1) rectangle +(2,2) 
		(1,1) to ++(0,2) (0,2) to ++(2,0);
		\node (x1) at (.5, 1+.5) {\footnotesize $k\! -\!1$\strut};
		\node (x2) at (.5, 2+.5) {\footnotesize $k\!+\!m$\strut};
		\node (y1) at (1.5, 1+.5) {$k$\strut};
    		\node (y2) at (1.5, 2+.5) {$y$\strut};  
	\begin{scope}[color=black!70]
		\begin{scope}[font=\footnotesize]
			\node[right] at (2,2+.5) {row $r_0+1$\strut};
			\node[right] at (2,1+.5) {row $r_0$\strut};
		\node[above, inner sep=1pt] at (1.5, 2+1) {\begin{tabular}{c}col\\[-3pt]$c_0$\end{tabular}};
	\end{scope}
	\end{scope}
	\draw[->] (4,2) to node[above]{$I$} ++(1.5,0);
\begin{scope}[shift={(7,0)}]
	\draw (0,1) rectangle +(2,2) 
		(1,1) to ++(0,2) (0,2) to ++(2,0);
		\node (x1') at (.5, 1+.5) {\footnotesize $k\!-\!1$\strut};
		\node [Bv] (x2') at (.5, 2+.5) {};
		\node (y1') at (1.5, 1+.5) {$y$\strut};
    		\node (y2') at (1.5, 2+.5) {\footnotesize $k\!+\!m$\strut};  
	\draw[<-, rounded corners] (x2') to ++(0,1) to ++(-.4,0) node[left] {\footnotesize$k\!+\!m\!-\!1$};
	\begin{scope}[color=black!70]
		\begin{scope}[font=\footnotesize]
			\node[right] at (2,2+.5) {row $r_0+1$\strut};
			\node[right] at (2,1+.5) {row $r_0$\strut};
		\node[above, inner sep=1pt] at (1.5, 2+1) {\begin{tabular}{c}col\\[-3pt]$c_0$\end{tabular}};
	\end{scope}
	\end{scope}
\end{scope}	
}
\end{equation}
where $y > i+2m$ (including if $y$ is empty, in which case it has value infinity by convention).

For the remaining columns, we claim that 
\[ 
	(I \cdot \yc{T})(r_0+1,c) < (I \cdot \yc{T})(r_0,c) \textrm{  for all }c \geqslant c_0 . 
\]
To see this, first note that if 
\[ 
	\yc{T}(r+1,c')< \yc{T}(r,c') \textrm{ and  }\yc{T}(r+1,c'+1)> \yc{T}(r,c'+1), \textrm{ then }\yc{T}(r,c'+1) > \yc{T}(r+1,c'),
\]
and so cells $(r+1,c'), (r,c'+1),$ and $(r+1,c'+1)$ violate the triple condition~\eqref{equation.triple}.  Therefore in any Young composition tableau $\yc{T}$ 
and any row $r$, 
\[ 
	\textrm{ if } \yc{T}(r+1,c)< \yc{T}(r,c), \textrm{ then }\yc{T}(r+1,c')< \yc{T}(r,c') \textrm{ for all }c' \geqslant c. 
\]
Since $(I \cdot \yc{T})(r_0+1,c_0) < (I \cdot \yc{T})(r_0,c_0)$, we have $(I \cdot \yc{T})(r_0+1,c) < (I \cdot \yc{T})(r_0,c)$ for all $c \geqslant c_0$.

We are now ready to apply induction across the columns of $\yc{T}$. Recall the convention that for any empty cell $(r, c)$ in $\yc{T}$ and any $x \in [n]$, 
we set $x < \yc{T}(r, c)$. Lemma~\ref{Gen_stability_lemma} property~\ref{semistable1} implies that the lengths of all rows $r \ne r_0, r_0+1$ are the same 
in $\yc{T}$ and $I \cdot \yc{T}$. Recall columns $c_1$ and $c_2$ are defined in the proof of Lemma~\ref{lemma:I on subtableau} and 
Figure~\ref{fig:how I moves I}. Let $c_3$ be the rightmost column where $\yc{T}(r_0, c) < \yc{T}(r_0+1, c)$; thus $c_3 \geqslant c_2$. Then 
Lemma~\ref{Gen_stability_lemma}, Lemma~\ref{lemma:I on subtableau}, the observation in~\eqref{equation.col c0} about what happens to column $c_0$, 
and the fact that $(I \cdot \yc{T})(r_0+1,c) < (I \cdot \yc{T})(r_0,c)$ for all $c \geqslant c_0$ come together to say that non-empty boxes in rows $r_0$ and 
$r_0+1$ will effectively swap rows from columns $c_0$ through $c_3$, and will be fixed thereafter. See Figure~\ref{figure.DEchange} and 
Remark~\ref{remark.DEchange}.

In particular, the shape will change under $I$ if and only if an empty box changes place with a non-empty box.  This will happen if and only if 
row $r_0+1$ of $\yc{T}$ has length less than $ c_3$ (so that row $r_0$ of $\yc{T}$ has length precisely $c_3$ by definition). In other words, $\alpha \ne \beta$ 
if and only if \ref{condition2} is met.
\end{proof}

\begin{remark}\label{remark.DEchange}
We add visual intuition in Figure~\ref{figure.DEchange} for the proof of Theorem~\ref{theorem.component change} by building out from 
Figure~\ref{fig:how I moves I}. Let
\[
	D_1 := \{ \yc{T}(r_0+1, c) ~|~ c_0 \leqslant c \leqslant c_2\}
\]
be the values in row $r_0+1$ of $\yc{T}$ that replace elements of $I$ in row $r_0$ under the action of $\cycle(I)$; let 
\begin{equation*}
	E_1 := \{ \yc{T}(r_0, c) ~|~ c_2+1 \leqslant c \leqslant c_3\} \qquad \text{and} \qquad
	D_2 := \{ \yc{T}(r_0+1, c) ~|~ c_2+1 \leqslant c \leqslant c_3\}
\end{equation*}
be the remaining values in these two rows such that the value in row $r_0$ is smaller (and non-empty); and let 
\begin{equation*}
E_2 := \{ \yc{T}(r_0, c) ~|~ c_3 < c\} \qquad  \text{ and } \qquad
D_3 := \{ \yc{T}(r_0+1, c) ~|~ c_3 < c\}
\end{equation*}
be all remaining values in these two rows (where the value in row $r_0$ is larger or both are empty). 

As pictured, $D_1$ contains only values greater than $i+2m$, and hence moves under $I$ since 
\[ (I \cdot \yc{T})(r_0+1,c) < (I \cdot \yc{T})(r_0,c) \textrm{ for all }
c \geqslant c_0. \] Moreover, regions $D_2$ and $E_1$ are cells where $i+2m < \yc{T}(r_0,c) < \yc{T}(r_0+1,c)$, so also move for this same reason. 
Regions $D_3$ and $E_2$ are cells where $i+2m < \yc{T}(r_0+1,c) \leqslant \yc{T}(r_0,c)$, 
and hence do not move under $I$ since they already have the smaller entry in the higher row.  Once we have passed column $c_3$, the 
values in row $r_0$ of $\yc{T}$ will continue to exceed those in row $r_0+1$ for all remaining columns. 

Hence, \ref{condition2} is met if and only if there is at least one empty cell in regions $D_1$ or $D_2$. And if so, then regions $E_2$ and $D_3$ are 
also empty.
\end{remark}

\begin{figure}[!h]
\[\TIKZ{
\node (T) at (0,0){\suffYTallColor};
\node (IT) at (8.8,0){\suffIYTallColor};
\draw[->] (T) to node[above]{$I$} (IT);}\]
\caption{An illustration of the proof of Theorem~\ref{theorem.component change}, detailing how values move under the action of $I$ given that $I$ and 
$\yc{T}$ satisfy \ref{condition1}. See also Remark~\ref{remark.DEchange} and Figure~\ref{fig:how I moves I}. 
}\label{figure.DEchange}
\end{figure}
\medskip
\begin{example}
Consider the following example where $n=150$ and $I = [84, 112]$ (so that $i = 84$ and $m=14$) and $k = 87$. 

\newcommand\ShapeChangeExFill[3]
{
	\filldraw[#3!15] (#1-1,#2-1) rectangle ++(1,1);
	\ifnum\thes>99
	\node[#3!70!black] at (#1-.5, #2-.5){\tiny \strut$\thes$}; 
	\fi
	\ifnum\thes<100
	\node[#3!70!black] at (#1-.5, #2-.5){\strut $\thes$}; 
	\fi
	\stepcounter{s}
}
\[\TIKZ[font=\scriptsize, scale=.415]{
\foreach \x [count=\c from 0] in {4, 10, 13, 15}{
\draw[densely dotted] (\x -.5,11) to (\x-.5, 13.5) node[above] {$c_\c$};}
\setcounter{s}{1}
\foreach \L [count=\r from 1] in {12,16,17,7,4,11,6,2,8}{
\foreach \c in {1, ..., \L}{
	\filldraw[black!5] (\c-1,\r-1) rectangle ++(1,1);
	\node[black!70] at (\c-.5, \r-.5){\scriptsize \strut $\thes$}; 
	\stepcounter{s}
}
}
\foreach \c in {1,2,3}{	\ShapeChangeExFill{\c}{10}{A1strip}}
\ShapeChangeExFill{4}{10}{kstrip}
\foreach \c in {5,...,10}{\ShapeChangeExFill{\c}{10}{A2strip}}
\foreach \c in {12,13,14}{\ShapeChangeExFill{\c}{6}{A3strip}}
\foreach \c in {18,19}{\ShapeChangeExFill{\c}{3}{A3strip}}
\foreach \c in {1,2,3}{\ShapeChangeExFill{\c}{11}{B1strip}}
\foreach \r [count=\c from 5] in {5,5,7,7,9,9}{\ShapeChangeExFill{\c}{\r}{B2strip}}
\foreach \c in {11,12,13}{\ShapeChangeExFill{\c}{10}{B3strip}}
\foreach \c in {17,18}{\ShapeChangeExFill{\c}{2}{B4strip}}
\ShapeChangeExFill{1}{12}{black}
\foreach \c in {11,12}{\ShapeChangeExFill{\c}{9}{black}}
\ShapeChangeExFill{3}{8}{black}
\foreach \c in {4, ..., 7}{\ShapeChangeExFill{\c}{11}{D1strip}}
\foreach \c in {2,...,6}{\ShapeChangeExFill{\c}{12}{black}}
\foreach \c in {1,...,3}{\ShapeChangeExFill{\c}{13}{black}}
\foreach \c in {14,15}{\ShapeChangeExFill{\c}{10}{E1strip}}
\foreach \c in {8, ..., 13}{\ShapeChangeExFill{\c}{11}{D1strip}}
\foreach \c in {9,10}{\ShapeChangeExFill{\c}{7}{black}}
\foreach \c in {14,15}{\ShapeChangeExFill{\c}{11}{D2strip}}
\foreach \c in {16,17,18}{\ShapeChangeExFill{\c}{11}{D3strip}}
\foreach \c in {7,8,9}{\ShapeChangeExFill{\c}{12}{black}}
\foreach \c in {16,17}{\ShapeChangeExFill{\c}{10}{E2strip}}
\foreach \c in {4,5}{\ShapeChangeExFill{\c}{13}{black}}
\begin{scope}
\Comp{12,18,19,7,6,14,10,3,12,17,18, 9, 5}
\end{scope}
\draw[very thick] (0,0)  
\foreach \x [count = \y from 1] in {12,18,19,7,6,14,10,3,12,15,14, 6, 3}{
	to (\x, \y-1) to (\x, \y)
}
to (0,13) to (0,0);
} 
\xrightarrow{\ I\ }
\TIKZ[font=\scriptsize, scale=.415]{
\foreach \x [count=\c from 0] in {4, 10, 13, 15}{
\draw[densely dotted] (\x -.5,11) to (\x-.5, 13.5) node[above] {$c_\c$};}
\setcounter{s}{1}
\foreach \L [count=\r from 1] in {12,16,17,7,4,11,6,2,8}{
\foreach \c in {1, ..., \L}{
	\filldraw[black!5] (\c-1,\r-1) rectangle ++(1,1);
	\node[black!70] at (\c-.5, \r-.5){\scriptsize \strut $\thes$}; 
	\stepcounter{s}
}
}
\foreach \c in {1,2,3}{	\ShapeChangeExFill{\c}{10}{A1strip}}
\foreach \r [count=\c from 5] in {5,5,7,7,9,9}{\ShapeChangeExFill{\c}{\r}{A2strip}}
\foreach \c in {12,13,14}{\ShapeChangeExFill{\c}{6}{A3strip}}
\foreach \c in {18,19}{\ShapeChangeExFill{\c}{3}{A3strip}}
\foreach \c in {1,2,3}{\ShapeChangeExFill{\c}{11}{B1strip}}
\ShapeChangeExFill{4}{11}{kstrip}
\foreach \c in {5,...,10}{\ShapeChangeExFill{\c}{11}{B2strip}}
\foreach \c in {11,12,13}{\ShapeChangeExFill{\c}{11}{B3strip}}
\foreach \c in {17,18}{\ShapeChangeExFill{\c}{2}{B4strip}}
\ShapeChangeExFill{1}{12}{black}
\foreach \c in {11,12}{\ShapeChangeExFill{\c}{9}{black}}
\ShapeChangeExFill{3}{8}{black}
\foreach \c in {4, ..., 7}{\ShapeChangeExFill{\c}{10}{D1strip}}
\foreach \c in {2,...,6}{\ShapeChangeExFill{\c}{12}{black}}
\foreach \c in {1,...,3}{\ShapeChangeExFill{\c}{13}{black}}
\foreach \c in {14,15}{\ShapeChangeExFill{\c}{11}{E1strip}}
\foreach \c in {8, ..., 13}{\ShapeChangeExFill{\c}{10}{D1strip}}
\foreach \c in {9,10}{\ShapeChangeExFill{\c}{7}{black}}
\foreach \c in {14,15}{\ShapeChangeExFill{\c}{10}{D2strip}}
\foreach \c in {16,17,18}{\ShapeChangeExFill{\c}{11}{D3strip}}
\foreach \c in {7,8,9}{\ShapeChangeExFill{\c}{12}{black}}
\foreach \c in {16,17}{\ShapeChangeExFill{\c}{10}{E2strip}}
\foreach \c in {4,5}{\ShapeChangeExFill{\c}{13}{black}}
\begin{scope}
\Comp{12,18,19,7,6,14,10,3,12,17,18, 9, 5}
\end{scope}
\draw[very thick] (0,0)  
\foreach \x [count = \y from 1] in {12,18,19,7,6,14,10,3,12,14,15, 6, 3}{
	to (\x, \y-1) to (\x, \y)
}
to (0,13) to (0,0);
}
\]
This example meets \ref{condition1}, with $r_0=10$ and $c_0, \dots, c_3$ as marked. However, it does not meet \ref{condition2}: 
row 11 is longer than row 10 in $\yc{T}$, due to 
\[E_2 = \{147, 148\} \quad \text{ and } \quad D_3 = \{141, 142, 143\}\]
being non-empty. However, if we restrict to the interval $J = [1, 139]$ (the thickly outlined region), then \ref{condition2} is met, and we change shape by swapping rows 10 and 11. The same can be said for a restriction to $J = [1, N]$ for any $112< N < 140$.
\end{example}

\subsection{Connectedness of the quasicrystal skeleton}
\label{section.connectedness}

Note that, as explored in~\cite{MNS.2025}, the subgraph of the crystal  corresponding to terms contributing to a (Young) quasisymmetric Schur 
function is not necessarily connected, nor is the subgraph corresponding to a Demazure atom.  Therefore, unlike in the quasicrystal case (as stated 
in Theorem~\ref{theorem.QC connected}), one would not expect the quasicrystal skeleton to be connected in general.  For example 
$\QCS_{3,1,3,1}$ and $\QCS_{4,2,1,2}$ are disconnected, see for example Figure~\ref{figure.QCS3131}.

\begin{figure}[t]
\centering
\includegraphics{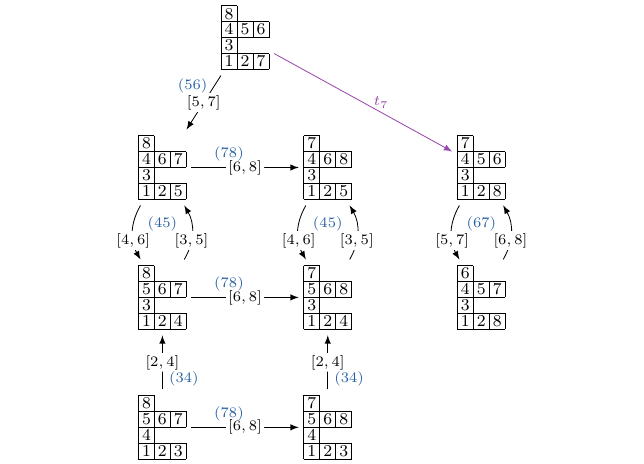}
\caption{The augmented quasicrystal skeleton $\QCS^+_{3,1,3,1}$ (not displaying $t_i$ edges which replicate existing crystal skeleton edges).
Without the displayed $t_7$ arrow, $\QCS_{3,1,3,1}$ is disconnected.
\label{figure.QCS3131}}
\end{figure}

We therefore need to introduce additional edges in the quasicrystal skeleton in order to make it connected.  
\begin{definition}\label{def:Youngcompositionentryswap}
Let $i$ and $i+1$ be two entries in a Young composition tableau $\yc{T}$ such that 
\[ |\col_{\yc{T}}(i+1)-\col_{\yc{T}}(i)|>1 \]
with $i+1$ in a row higher than $i$. Define $t_i(\yc{T})$ to be the Young composition tableau obtained by swapping the entries $i$ and $i+1$.
\end{definition}
Note that if a Young composition tableau satisfies Definition~\ref{def:Youngcompositionentryswap},  the entries must satisfy this same condition in the corresponding standard Young tableau $T=\varphi(\yc{T})$.

\begin{lemma}
The diagram $t_i(\yc{T})$ is a Young composition tableau.
\end{lemma}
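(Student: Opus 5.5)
We verify directly that $t_i(\yc{T})$ satisfies the three conditions of Definition~\ref{def:SSYCT}. Only two cells change, so only constraints involving those cells need checking. The useful fact is that $i$ and $i+1$ are consecutive: for any other entry $x$, comparing $x$ with $i$ gives the same result as comparing $x$ with $i+1$. Hence every inequality that does not compare $i$ directly with $i+1$ is preserved by the swap.

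Set $(r,c)=\cell_{\yc{T}}(i)$ and $(r',c')=\cell_{\yc{T}}(i+1)$, with $r'>r$ and $|c'-c|>1$. The only inequalities that can fail are those in which the cells of $i$ and $i+1$ both occur together.

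\textbf{Rows and first column.} Since $r\neq r'$, the two cells never lie in the same row, so condition~(1) is unaffected. Since $|c'-c|>1$, at most one of the two cells lies in column $1$. So the first column either stays the same or has one entry replaced by an adjacent value, and condition~(2) still holds.

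\textbf{The triple condition.} A triple $x=T(R',C)$, $y=T(R',C+1)$, $z=T(R,C+1)$ with $R'>R$ involves only columns $C$ and $C+1$. Because $|c'-c|>1$, it cannot contain both the cell of $i$ and the cell of $i+1$. By the consecutivity fact above, each comparison $z>x$ and $z>y$ has the same truth value before and after the swap. Hence condition~(3) is preserved.

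\textbf{Main obstacle.} The only subtle point is the existence clause implicit in the triple condition: empty cells count as $\infty$. This clause does not change, because the shape is unchanged and only entries are swapped. No comparison ever involves $\infty$ against the pair $\{i,i+1\}$ directly, since $\infty$ is larger than both. So I expect the argument to be a short case check, and the care needed is mainly in stating the consecutivity observation precisely.
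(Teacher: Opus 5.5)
Your proposal is correct and follows the paper's proof. In both, conditions (1) and (2) hold because $i$ and $i+1$ lie in different rows and are not both in the leftmost column. The triple condition holds because a triple spans only two adjacent columns, so it cannot contain both cells, and swapping consecutive values preserves every comparison other than one between $i$ and $i+1$. Your explicit consecutivity observation, extended to empty cells counted as $\infty$, simply makes precise what the paper states as triples ``maintaining the same relative orders.''
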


\begin{proof}
Condition (1) of Definition~\ref{def:SSYCT} is satisfied since $i$ and $i+1$ cannot be in the same row.  Condition (2) is satisfied since $i$ and $i+1$ 
cannot both be in the leftmost column.  We must check the third condition.  We only need to check triples involving $i$ and $i+1$.  But since 
$|\col_{\yc{T}}(i)-\col_{\yc{T}}(i+1)|>1$, no triple involves both $i$ and $i+1$.  Any triple involving only one of $i$ and $i+1$ will 
maintain the same relative orders when $i$ and $i+1$ are swapped.  Therefore all triples will continue to satisfy condition (3) in $t_i(\yc{T})$ and 
therefore $t_i(\yc{T})$ is indeed a Young composition tableau.
\end{proof}

We next introduce a method of introducing edges to the quasicrystal skeleton so that it is connected. 

\begin{definition}
We define the \defn{augmented quasicrystal skeleton} $\QCS^+_{\alpha}$ to be the quasicrystal skeleton $\QCS_{\alpha}$ together with all edges 
of the form 
\begin{equation}
\label{equation.transposition edge}
	\yc{T} \stackrel{t_i}{\longrightarrow} t_i(\yc{T})
\end{equation}
for $\yc{T} \in \QCS_\alpha$.
\end{definition}

\begin{theorem}
\label{theorem.connectivity}
The augmented quasicrystal skeleton $\QCS^+_{\alpha}$ is connected for all compositions $\alpha$.
\end{theorem}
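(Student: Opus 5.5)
We will show that every $\yc{T}\in\QCS_\alpha$ is joined in $\QCS^+_\alpha$ to the superstandard filling $S_\alpha$ of Lemma~\ref{lem:superstandard}. The idea is to induct on the position of the first entry where $\yc{T}$ differs from $S_\alpha$, and to show that this entry can be pushed toward its superstandard position. Each individual step will be either a crystal skeleton edge that stays inside $\QCS_\alpha$ or a transposition edge $t_i$ (used in either direction, since connectivity is an undirected statement).

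\textbf{Setup.} Read the cells of $\alpha$ in superstandard order: rows bottom to top, left to right within a row. Let $p$ be the first cell where $\yc{T}(p)\neq S_\alpha(p)$. Write $x=S_\alpha(p)$ and $y=\yc{T}(p)$, so $y>x$. Every cell before $p$ already holds its superstandard value, so all values below $x$ fill exactly the cells before $p$.

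\textbf{Step 1: placing $x$.} The entry $x$ sits in some cell $q$ later in reading order. Our plan is to decrease the value at $p$ (equivalently, move $x$ toward $p$) by repeatedly exchanging consecutive values.

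Look at $y-1$ and $y$ with $y-1\geqslant x$. If their columns differ by more than one and $y$ lies higher, then $t_{y-1}$ swaps them. This keeps the shape $\alpha$, replaces the entry at $p$ by $y-1$, and leaves the cells before $p$ untouched. If $y$ lies lower than $y-1$, we apply the inverse of the $t$-edge instead.

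When the columns of $y-1$ and $y$ are equal or adjacent, $t$-edges are not available and we need a crystal skeleton edge. For this we use a Dyck pattern interval of length three, i.e.\ a dual equivalence edge (property (5) in \S\ref{ss.CS}), on the letters $\{y-2,y-1,y\}$ or $\{y-1,y,y+1\}$. Such an edge acts as an elementary dual equivalence, swapping $y-1$ and $y$ (or $y$ and $y+1$). We must check it stays inside $\QCS_\alpha$, and this is where Theorem~\ref{theorem.component change} enters. For $m=1$, \ref{condition1} requires $k=i+1$ and the letters $i,i+1$ to lie in the first two columns. So we only need to avoid intervals whose leftmost letter sits in column $1$ with the next letter in column $2$, or else verify that \ref{condition2} fails there.

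\textbf{Step 2: cases where no such edge exists.} If neither a $t$-move nor an admissible length-three interval is available, we claim the triple condition \eqref{equation.triple} forces $y-1$ to already lie in a position reachable by a longer Dyck pattern interval whose cycle shifts the whole horizontal strip. We would verify this using Lemma~\ref{lemma.column separation}, choosing $I$ so that \ref{condition1} fails, i.e.\ $k=i$.

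\textbf{Main obstacle.} Step~2 is the hard part. We must show that whenever $\yc{T}\neq S_\alpha$ there is some shape-preserving edge or $t$-edge that strictly decreases the first disagreement (or decreases the entry at $p$). We would first try the potential function $\sum_c c\cdot(\text{value in cell }c\text{ in reading order})$, aiming to show that some edge lowers it while preserving the prefix. The example $\QCS_{3,1,3,1}$ (Figure~\ref{figure.QCS3131}) shows that $t$-edges are genuinely necessary, and the case analysis will hinge on adjacent-column configurations in rows $r$ and $r+1$ analogous to Remark~\ref{YCTrows}.
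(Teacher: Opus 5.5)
Your proposal has a genuine gap. Everything rests on the claim in your last paragraph: whenever $\yc{T}\neq S_\alpha$, some shape-preserving skeleton edge or $t$-edge either pushes the first disagreement $p$ later or lowers $\yc{T}(p)$. Step~2 only states this as a hope, and in fact the claim is false.

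Take $\alpha=(2,2,2)$ and $\yc{T}$ with rows $(1,6),(2,5),(3,4)$ from bottom to top. Then $T=\varphi(\yc{T})$ has columns $\{1,2,3\}$ and $\{4,5,6\}$, and $\row(T)=362514$. Here $p=(1,2)$ and $y=6$.
\begin{itemize}
\item There are no $t$-edges at all, since there are only two columns.
\item $[4,6]$ is not a Dyck pattern interval, because $\row(T)$ restricted to it is $654$.
\item The only Dyck pattern interval of $T$ is $[3,5]$. The only neighbour of $T$ in $\CS(2,2,2)$, in either direction, is the tableau with rows $(1,3),(2,5),(4,6)$, joined by $[3,5]$ and by $[2,4]$ in reverse.
\item Under $\varphi^{-1}$ that neighbour becomes rows $(1,6),(2,3),(4,5)$. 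It has the same first disagreement and the same entry $6$ at $p$.
\end{itemize}
So neither Step~1 nor Step~2 can make the progress your induction needs.

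Step~1 also has three further problems.
\begin{itemize}
\item The length-three intervals you want need not exist, as above.
\item When one does exist, it swaps $k$ and $k+1$ in $T$, which may be $y-2,y-1$ rather than $y-1,y$.
\item Because $\yc{T}$ is recomputed by $\varphi^{-1}$, entries can change rows, so the move is not an in-place transposition in $\yc{T}$.
\end{itemize}
Your potential also points the wrong way. The superstandard filling is the unique maximizer of $\sum_c c\cdot(\text{value in cell } c)$ by the rearrangement inequality, so lowering it moves away from $S_\alpha$.

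The missing idea is to induct on $n=|\alpha|$ through the position of the largest entry, as the paper does, instead of seeking a global monotone potential.
\begin{itemize}
\item The cell containing $n$ is a removable cell $d$. The vertices with $n$ in $d$ form a copy of $\QCS^+_\gamma$ with $\gamma=\alpha-d$. Dyck pattern edges survive adding $n$ without changing the shape, and $t_i$ with $i\leqslant n-2$ still applies. So each such block is connected by induction.
\item It remains to join each block to the block with $n$ at the end of the last row. One explicit edge at a convenient representative suffices: the superstandard filling of $\gamma$ with $n$ appended at $d$, where $n-1$ sits at $(\ell,\alpha_\ell)$.
\item If $\col(n)$ differs from $\alpha_\ell$ by more than one, $t_{n-1}$ gives the edge.
\item The case $\col(n)=\alpha_\ell+1$ is excluded by the triple condition.
\item If $\col(n)\in\{\alpha_\ell-1,\alpha_\ell\}$, then $[n-2,n]$ is a Dyck pattern interval giving the edge.
\item If $\alpha_\ell=1$, then $n$ must lie in column at least $3$, so $t_{n-1}$ works.
\end{itemize}
This reduces the whole argument to a local check on $n-2,n-1,n$ in one chosen tableau, which is exactly what your global greedy scheme cannot control.
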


\begin{proof}
We prove the claim by induction on the size of $\alpha$.  The base case is trivial.  Assume $\QCS^+_{\delta}$ is connected for all $\delta$ such that 
$| \delta| < n$ and let $\alpha=(\alpha_1,\alpha_2, \hdots , \alpha_{\ell})$ be a composition of $n$.   First note that removing the cell containing $n$ 
leaves a valid Young composition tableau, since the removal of $n$ is akin to replacing $n$ with infinity.  Let $\gamma$ be the shape of a diagram 
that can be obtained from a Young composition tableau of shape $\alpha$ by removing the cell containing $n$.  We claim that the edges in 
$\QCS^+_{\gamma}$ are a subset of the edges in $\QCS^+_{\alpha}$.  To see this, first let $e$ be an edge in $\QCS^+_{\gamma}$.  If it comes 
from a Dyck pattern interval, that Dyck pattern interval remains a Dyck pattern interval when $n$ is inserted into the reading word.  Hence it is an 
edge in $\QCS^+_{\alpha}$.  If $e$ is an edge of the form~\eqref{equation.transposition edge} with $1\leqslant i \leqslant n-2$ for some Young 
composition tableau $\yc{T}$ of shape $\gamma$, then $i$ and $i+1$ remain in the same positions when $n$ is inserted and thus $t_i$ still applies 
to the corresponding Young composition tableau of shape $\alpha$. So the edges of $\QCS^+_{\gamma}$ are contained in the edges of $\QCS^+_{\alpha}$.  

Note that there is always a Young composition tableau of shape $\alpha$ with $n$ at the end of the last row of $\alpha$, namely the superstandard 
tableau.  Let $\beta$ be the composition obtained by decreasing the last part ($\alpha_{\ell}$) of $\alpha$ by one.  (If $\alpha_{\ell}=1$ then simply 
delete the last part of $\alpha$.)  Then $\QCS^+_{\beta}$ is connected by the inductive hypothesis, as is $\QCS^+_{\gamma}$.   

Let $d_{\beta}$ be the cell removed from $\alpha$ to construct $\beta$ and let  $d_{\gamma}$ be the cell removed from one of the first $\ell-1$ 
rows of $\alpha$ to construct $\gamma$.  It suffices to show that there exists a Young composition tableau $\yc{T}_{\beta} \in \QCS^+_{\beta}$ 
and a Young composition tableau $\yc{T}_{\gamma} \in \QCS^+_{\gamma}$ such that there is an edge in $\QCS^+_\alpha$ between 
$\widetilde{T}_{\beta}$ and $\widetilde{T}_{\gamma}$, where $\widetilde{T}_{\beta}$ and $\widetilde{T}_{\gamma}$ are the Young composition
tableaux of shape $\alpha$ constructed by appending an $n$ to $\yc{T}_{\beta}$ in position $d_{\beta}$ and to $\yc{T}_{\gamma}$ in position 
$d_{\gamma}$ respectively. 

First assume that $\alpha_{\ell} \geqslant 2$.  Let $\yc{T}_{\gamma}$ be the superstandard Young composition tableau of shape $\gamma$.  Then 
$\cell_{ \widetilde{T}_{\gamma}}(n-1) = ( \ell, c)$ with $c=\alpha_\ell \geqslant 2$.  So $\cell_{ \widetilde{T}_{\gamma}}(n-2) = ( \ell, c-1)$.
If $\col_{\widetilde{T}_{\gamma}}(n)<c-1$ or $\col_{\widetilde{T}_{\gamma}}(n)>c+1$ then $t_{n-1}$ is an edge between $\widetilde{T}_{\gamma}$ and 
$\widetilde{T}_{\beta}$.  Therefore we must show there exists an edge between $\widetilde{T}_{\gamma}$ and $\widetilde{T}_{\beta}$ if 
$\col_{\widetilde{T}_{\gamma}}(n) \in \{ c-1,c,c+1\}$.  First note that $\col_{\widetilde{T}_{\gamma}}(n) \not = c+1$ since the cells 
$\{(\ell,c),(\ell,c+1),(\row_{\widetilde{T}_{\gamma}}(n),c+1)\}$ would violate the triple condition~\eqref{equation.triple}.

Assume that $\col_{\widetilde{T}_{\gamma}}(n)=c$.  Then the Dyck pattern $n-2,n,n-1$ appears in $\varphi(\widetilde{T}_{\gamma})$ since $n-1$ 
and $n$ are in the same column and $n$ is therefore in a higher row of the standard Young tableau. The edge from $\varphi(\widetilde{T}_{\gamma})$ to
$I \cdot \varphi(\widetilde{T}_{\gamma})$, where $I=[n-2,n]$, replaces the pattern $n-2,n,n-1$ with $n-1,n,n-2$. The resulting Young composition 
tableau $\widetilde{T}_{\beta}$ has $(n-1)$ in cell $(\ell,c-1)$ while $n$ is in cell $(\ell,c)$ and $n-2$ is in cell $(\row_{\widetilde{T}_{\gamma}}(n),c)$.
Therefore this is the desired edge from $\widetilde{T}_{\gamma}$ to $\widetilde{T}_{\beta}$ in $\QCS_\alpha^+$. 

Next assume that $\col_{\widetilde{T}_{\gamma}}(n)=c-1$.  Then $n,n-2,n-1$ is a Dyck pattern in the reading word of $\varphi(\widetilde{T}_{\gamma})$ 
so that the edge from $\varphi(\widetilde{T}_{\gamma})$ to $I \cdot \varphi(\widetilde{T}_{\gamma})$ with $I=[n-2,n]$ replaces this pattern with 
$n-1,n-2,n$.  Therefore the resulting Young composition tableau is the desired $\widetilde{T}_{\beta}$.  Thus we have identified the desired edge 
from $\widetilde{T}_{\gamma}$ to $\widetilde{T}_{\beta}$ in $\QCS_\alpha^+$ when $\alpha_{\ell} \geqslant 2$.

Finally assume that $\alpha_{\ell}=1$.  Note that $n$ cannot lie in the leftmost column in any row other than row $\ell$ since the leftmost 
column is strictly increasing.  Also, if $n$ were in the second column in a row lower than row $\ell$, the triple condition would be violated.  
Therefore any Young composition tableau $\widetilde{T}_{\gamma}$ of shape $\alpha$ with $n$ not in row $\ell$ must have 
$\col_{\widetilde{T}_{\gamma}}(n) \geqslant 3$.  
But then the edge from $\widetilde{T}_\gamma$ to $t_{n-1}(\widetilde{T}_\gamma)$ is the desired edge.
\end{proof}

\subsection{Contraction to Bruhat order}
\label{section.Bruhat}

Since the (augmented) quasicrystal skeletons tile the crystal skeleton, we thus obtain the crystal analogue of~\eqref{equation.s YQS}:
\begin{equation}
\label{equation.symmetric group}
	\mathsf{CS}(\lambda) = \bigcup_{\alpha \in S_\ell \cdot \lambda} \mathsf{QCS}_\alpha 
	= \bigcup_{\alpha \in S_\ell \cdot \lambda} \mathsf{QCS}^+_\alpha,
\end{equation}
where the union is over all compositions that rearrange $\lambda$. 
Here $S_\ell\cdot \lambda$ indicates the action of the symmetric group $S_\ell$ on $\lambda$, where $\lambda$ has $\ell$ parts. 

Recall from Theorem~\ref{theorem.connectivity} that the augmented quasicrystal skeleton is connected.
Contracting the augmented quasicrystal skeletons inside the crystal skeleton yields \defn{Bruhat order} on $S_\ell$ (or a quotient of Bruhat order if $\lambda$ 
has repeated parts). In particular, under this correspondence $\QCS^+_\alpha$ is associated to $\sigma \in S_\ell$, where $\sigma \cdot \lambda = \alpha$.

It was proven in~\cite[Thm.\ 4.37]{BCDS.2025} that the crystal skeleton $\mathsf{CS}(\lambda)$ restricted to $T\in\mathsf{SYT}(\lambda)$ 
such that $\mathsf{Des}(T)$ has $\ell$ parts, where $\ell$ is the number of parts of $\lambda$, is isomorphic to the $\mathfrak{sl}_\ell$-crystal $B(\lambda)$, 
called the subcrystal property. There is a symmetric group action on crystals (see for example~\cite[\S 2.5]{BumpSchilling.2017} and~\eqref{equation.si}), 
yielding an \defn{extremal crystal}~\cite{Kashiwara.2002}.  
Each quasicrystal skeleton $\QCS_\alpha$ (and equivalently augmented quasicrystal skeleton $\QCS_\alpha^+$) has one canonical element inside 
this $\mathfrak{sl}_\ell$-subcrystal of the crystal skeleton corresponding to the extremal weight vectors. Hence~\eqref{equation.symmetric group} is the 
extension of the extremal crystal structure on the subcrystal of $\mathsf{CS}(\lambda)$ to the full crystal skeleton.

\section{Stanley symmetric functions}
\label{section.stanley}

In this section, we illustrate the results from the preceding section, interpreting them in the context of Stanley symmetric functions.
The Stanley symmetric function $\mathcal{F}_w$ indexed by a permutation $w\in S_n$ was introduced 
in~\cite{Stanley.1984} to enumerate the reduced decompositions of $w$.
Stanley symmetric functions can be defined in terms of \defn{increasing factorizations} of $w^{-1}$.
A word $a^1\cdots a^{\ell}$ is an increasing factorization for $w^{-1}$ into $\ell$ factors if the following is true:
\begin{enumerate}
\item 
We have $a^i = a^i_1 \cdots a^i_{k_i}$ with $a^i_j \in [n-1]$ and $a_1^i<a_2^i<\cdots<a_{k_i}^i$ for all $1\leqslant i\leqslant \ell$. 
Note that $a^i$ can be empty in which case $k_i=0$.
\item
Let $s_{a^i}:=s_{a_1^i}\cdots s_{a_{k_i}^i}$, where $s_j$ for $j\in [n-1]$ are the simple transpositions interchanging $j$ and $j+1$ in $S_n$.
Then 
\[ w^{-1} = s_{a^1} \cdots s_{a^\ell} \textrm{   and    }\ell(w) = \ell(a^1)+\cdots+\ell(a^\ell),\] where $\ell(w)$ is the length of the permutation and 
$\ell(a^i)=k_i$ is the length of the word $a^i$.
\end{enumerate}
Let $\mathcal{IW}^\ell_{w}$ be the set of increasing factorizations of $w^{-1}$ into $\ell$ factors. Then the \defn{Stanley symmetric polynomial} is
\begin{equation}
\label{equation.stanley}
	\mathcal{F}_w(x_1,\ldots,x_\ell) = \sum_{a^1 a^2 \cdots a^\ell \in \mathcal{IW}^\ell_{w}} x_1^{\ell(a^1)} \cdots x_\ell^{\ell(a^\ell)}.
\end{equation}

\subsection{Crystal}
\label{section.crystal Fw}

The crystal for Stanley symmetric polynomials was introduced in~\cite{MS.2016}. We briefly review it here. 

For $a=a^1\cdots a^\ell \in \mathcal{IW}^\ell_{w}$, the \defn{weight} of $a$ is given by $\mathsf{wt}(a) = (\ell(a^1),\ldots,\ell(a^\ell))$. The crystal operator
$f_i$ for $1\leqslant i<\ell$ relies on a \defn{pairing procedure} on the letters in factors $a^i$ and $a^{i+1}$:
\begin{itemize}
    \item Start with the largest letter $x$ in $a^{i+1}$ and pair it with the smallest $y>x$ in $a^i$.
    \item  If there is no such $y$ in $a^i$, then $x$ is unpaired.
    \item The pairing proceeds in decreasing order on elements of $a^{i+1}$, and with each iteration, previously paired letters of $a^i$ are ignored.

\end{itemize}
Let $U_i(a)$ be the set of unpaired letters in $a^i$ after the pairing procedure. 

When $U_i(a)=\emptyset$, we define $f_i(a)=0$. Otherwise $f_i(a)$ is defined by replacing the factors $a^i a^{i+1}$ by
$\tilde{a}^i \tilde{a}^{i+1}$, where
\[
	\tilde{a}^i = a^i \setminus \{t\} \quad \text{and} \quad \tilde{a}^{i+1} = a^{i+1} \cup \{t+s\},
\]
$t = \max U_i(a)$ and $s =\min \{j \geqslant 0 \mid t+j+1 \not \in a^i\}$. 

The raising operators $e_i$ on $a\in  \mathcal{IW}^\ell_{w}$ are 
defined similarly, see~\cite{MS.2016}. We denote the resulting $\mathfrak{sl}_\ell$-crystal by $B_w$.
An example is given in Figure~\ref{figure.B121}.

The crystal structure on reduced words is closely related to 
the \defn{Edelman--Greene bijection}~\cite{EG.1987}, a variant of the Robinson--Schensted--Knuth (RSK) correspondence. Specifically, it is a bijection
between the set $\mathcal{RW}_w$ of reduced words of $w^{-1}$ and tuples of tableaux $(P,Q)$ of the same shape, where the insertion tableau $P$ 
is row and column strict and whose row reading word is in $\mathcal{RW}_w$ and the recording tableau $Q$ is a standard Young tableau. 

The Edelman--Greene bijection is based on the 
following insertion, called \defn{EG insertion}. Inserting the letter $x$ into row $r$ of $P$ is defined by picking out the smallest letter $y>x$ in row $r$:
\begin{itemize}
    \item If no such $y$ exists, the letter $x$ is placed at the end of row $r$.
    \item  If $y=x+1$ and $x$ is also contained in row $r$, then $x+1$ is inserted into row 
$r+1$.
\item  Otherwise, $y$ is replaced by $x$, and $y$ is inserted into row $r+1$. In the last two cases, we say that $y$ has been bumped.

\end{itemize}
From this process, an insertion tableau $P$ and a recording tableau $Q$ are constructed from a reduced word $a_1 \ldots a_d$, starting from 
$P_0=Q_0=\emptyset$ and iteratively defining $P_i$ by inserting $a_i$ into the bottom row of $P_{i-1}$. Letters are bumped until a letter $x$ is 
placed at the end of a row $r$. Then $Q_i$ is defined by adding the letter $i$ at the end of row $r$ in $Q_{i-1}$. Finally, $P=P_d$ and $Q=Q_d$.

The Edelman--Greene bijection can be generalized to a bijection between $\mathcal{IW}_w^\ell$ and pairs of tableaux $(P,Q)$, where $P$ is
as before and $Q$ is a semistandard Young tableau (instead of standard tableau) with letters in $\{1,2,\ldots,\ell\}$, see~\cite{MS.2016}. 
When a letter in the $i$-th factor of $a\in \mathcal{IW}_w^\ell$ is inserted, it is recorded by the letter $i$ in $Q$. Given $a\in \mathcal{IW}^\ell_{w}$, 
define $\mathsf{EG}(a)=(P,Q)$, where $P$ is the insertion tableau and $Q$ is the recording tableau $Q$. We also write $\mathsf{EG}_P(a):=P$ and 
$\mathsf{EG}_Q(a):=Q$. 

It was shown in~\cite{MS.2016} that the crystal operators intertwine with this generalization of the Edelman--Greene bijection:
\begin{theorem} \cite[Theorem 4.11]{MS.2016}
\label{theorem.EG crystal}
Let $a\in \mathcal{IW}_w^\ell$. Then $f_i (\mathsf{EG}_Q(a)) = \mathsf{EG}_Q(f_i a)$.
\end{theorem}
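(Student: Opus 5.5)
The statement is \cite[Theorem 4.11]{MS.2016}: the crystal operators $f_i$ on increasing factorizations intertwine with the recording tableau of the generalized Edelman--Greene insertion. The plan is to reduce it to the classical fact that the recording tableau of RSK intertwines with the crystal operators on words. Concretely, I want a word $\rho(a)$ in the letters $\{1,\ldots,\ell\}$, attached to $a=a^1\cdots a^\ell$, whose RSK recording tableau is $\mathsf{EG}_Q(a)$. I would then check that $f_i$ on factorizations corresponds to the usual bracketing $f_i$ on $\rho(a)$.

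\textbf{Step 1: reduce to the two factors $a^i a^{i+1}$.} EG insertion is built from row insertions, and the letters of factors other than $a^i,a^{i+1}$ are recorded by letters other than $i,i+1$. The operator $f_i$ on a semistandard tableau only looks at the entries $i$ and $i+1$. I would use the standard fact (Edelman--Greene, via Coxeter--Knuth relations) that $\mathsf{EG}_P$ depends only on the Coxeter--Knuth class, and $\mathsf{EG}_Q$ restricted to $\{i,i+1\}$ depends only on inserting $a^i a^{i+1}$ into the tableau $P$ obtained from $a^1\cdots a^{i-1}$. The same localization is used for RSK. This reduces the problem to two factors inserted into a fixed tableau; I would further reduce to inserting into the empty tableau, arguing that the strip added by $a^ia^{i+1}$ behaves the same way.

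\textbf{Step 2: local analysis of two factors.} For two increasing words $u=a^i$ and $v=a^{i+1}$, EG insertion of $u$ then $v$ records a two-row skew pattern of $i$'s and $(i+1)$'s. The letters of $u$ all go into the first available row as an increasing horizontal strip. A letter of $v$ bumps exactly when there is a larger letter of $u$ that it effectively pairs with. I would show that the pairing procedure (largest $x\in v$ paired with the smallest unpaired $y>x$ in $u$) matches the parenthesis pairing of $i+1$'s against $i$'s in the reading word of the $\{i,i+1\}$ part of $Q$. The unpaired letters $U_i(a)$ then correspond to the unpaired $i$'s. The key consequence is that the number of unpaired letters is the same in both settings, and that the rightmost unpaired $i$ in $Q$ records $t=\max U_i(a)$.

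\textbf{Step 3: compare the two moves.} Moving $t$ from $a^i$ to $a^{i+1}$ as $t+s$, with $s$ chosen so that $t,t+1,\ldots,t+s$ lie in $a^i$, is exactly what keeps the product $s_{a^i}s_{a^{i+1}}$ unchanged. This uses the braid and commutation relations $s_t s_{t+1}\cdots s_{t+s}$ versus $s_{t+1}\cdots s_{t+s} s_{t+s}'$, so $f_i a\in\mathcal{IW}^\ell_w$ and $\mathsf{EG}_P$ is preserved. I would then check that in $Q$ the change is exactly one $i$ turning into $i+1$, namely the rightmost unpaired one, i.e. $f_i(Q)$. This is an induction on $s$: the chain $t,\ldots,t+s$ in $a^i$ makes the new letter $t+s$ in $a^{i+1}$ land in the same row and cell where $t$ was recorded, since the EG special rule ($y=x+1$ with $x$ present) propagates along the chain.

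\textbf{Main obstacle.} The hardest part is Step 3, specifically tracking the EG bumping paths. The special rule for consecutive letters makes the cell where the moved letter is recorded nonobvious. I would first settle the cases $s=0$ and a fresh first row, then induct on the length of the chain and on the number of paired letters of $a^{i+1}$ exceeding $t+s$.
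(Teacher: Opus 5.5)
The paper does not prove this statement; it imports it from Morse--Schilling. Your outline therefore has to stand on its own, and its central mechanism is false. Steps 2--3 claim that the rightmost unbracketed $i$ of $\mathsf{EG}_Q(a)$ is the cell recording $t=\max U_i(a)$, and that after applying $f_i$ the letter $t+s$ is recorded in that same cell. Take $w^{-1}=s_1s_3s_2$ and $a=(13)(2)$. EG insertion gives $\mathsf{EG}_P(a)$ with first row $1\,2$ and second row $3$, and $\mathsf{EG}_Q(a)$ with first row $1\,1$ and second row $2$. In $a$ the letter $2$ pairs with $3$, so $t=1$, $s=0$, and $f_1a=(3)(12)$. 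Its $\mathsf{EG}_P$ is the same, and its $\mathsf{EG}_Q$ has first row $1\,2$ and second row $2$. So the theorem holds here, but:
\begin{itemize}
\item the unbracketed $1$ of $\mathsf{EG}_Q(a)$ is cell $(1,2)$, which records the letter $3$, a \emph{paired} letter of $a^1$;
\item $t$ was recorded in cell $(1,1)$, which is bracketed;
\item in $f_1a$ the moved letter $t+s=1$ is recorded in the new cell $(2,1)$, not in $t$'s cell.
\end{itemize}
The factorization pairing and the bracketing on $Q$ agree only in number, not letter by letter, so following individual letters along bumping paths cannot give the result. What must be proved is a statement about shapes. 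A semistandard $Q$ is determined by the chain of shapes of $\mathsf{EG}_P(a^1\cdots a^j)$. So you need these shapes to be unchanged for $j\neq i$, and the shape for $j=i$ to lose exactly the cell of the rightmost unbracketed $i$.

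Three further steps are unsupported.
\begin{itemize}
\item \textbf{Preservation of $\mathsf{EG}_P$.} You need this so that the factors $a^{i+2},\dots,a^\ell$ are recorded identically, and it does not follow from $s_{a^i}s_{a^{i+1}}=s_{\tilde a^i}s_{\tilde a^{i+1}}$. The words $13$ and $31$ represent the same element but have different EG insertion tableaux. What is needed is $a^ia^{i+1}\equiv_{\text{CK}}\mathsf{red}(\tilde a^i\tilde a^{i+1})$. That means the rearrangement must be realized by K1, K2, B moves, not arbitrary commutation and braid moves; in the example it is the single K1 move $132\equiv_{\text{CK}}312$. Proving this requires using that $t$ is unpaired.
\item \textbf{Reduction to the empty tableau.} Passing from insertion into $\mathsf{EG}_P(a^1\cdots a^{i-1})$ to insertion into the empty tableau is exactly where the difficulty lies. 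The positions of the $i$- and $(i+1)$-cells, and hence the bracketing that defines $f_i$ on $Q$, depend on the earlier factors, while $U_i(a)$ does not. So ``the strip behaves the same way'' needs a real proof. A jeu de taquin compatibility of EG recording tableaux, for instance via Reiner--Shimozono's plactification, is one possible tool. It is not sufficient by itself, because shape plus rectification does not determine a skew filling.
\item \textbf{The opening RSK reduction.} This is misstated: for a word in $\{1,\dots,\ell\}$ the RSK recording tableau is standard and is fixed by the $f_i$, which act on the insertion tableau. It is also never carried out.
\end{itemize}
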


\begin{example}
Let $w=s_3 s_1 s_2 s_3 s_1 s_2$ and take $a=(2)(13)(2)(13) \in \mathcal{IW}^4_{w}$. Then $f_2 a =(2)(3)(12)(13)$,
\[
	\mathsf{EG}(a) = \left(
	\TIKZ[scale=0.35]{
	\Tableau[\scriptsize]{{1,2,3},{2,3},{3}})
	},\ 
	\TIKZ[scale=0.35]{
	\Tableau[\scriptsize]{{1,2,4},{2,3},{4}})
	}\right)
	\quad \text{and} \quad
	\mathsf{EG}(f_2 a) = \left(
	\TIKZ[scale=0.35]{
	\Tableau[\scriptsize]{{1,2,3},{2,3},{3}})
	},\ 
	\TIKZ[scale=0.35]{
	\Tableau[\scriptsize]{{1,2,4},{3,3},{4}})
	}\right).
\]
\end{example}

Define $\mathcal{IW}^{\ell,\mathsf{hw}}_{w} = \{ a \in  \mathcal{IW}^\ell_{w} \mid e_i(a)=0 \text{ for all } 1\leqslant i <\ell\}$. Then as a consequence
of Theorem~\ref{theorem.EG crystal}
\[
	\mathcal{F}_w = \sum_{a\in \mathcal{IW}^{\ell,\mathsf{hw}}_w} s_{\mathsf{wt}(a)}.
\]

While the RSK correspondence is intimately related to the plactic monoid and Knuth relations, the Edelman--Greene correspondence is linked to the 
\defn{Coxeter--Knuth relations} which are, for $x<y<z$, given by
\begin{equation}
\label{equation.CK}
\begin{aligned}
	&\bf{K1.}  &xzy &\equiv_{\text{CK}}  zxy,\\
	&\bf{K2.}  &yxz &\equiv_{\text{CK}} yzx,\\
	&\bf{B.}     &x \; x+1 \; x &\equiv_{\text{CK}} x+1 \; x \; x+1.
\end{aligned}
\end{equation}
Two reduced words $a,b \in \mathcal{RW}_w$ are Coxeter--Knuth equivalent, written $a\equiv_{\text{CK}} b$, if there exists a sequence of Coxeter--Knuth 
relations K1, K2, B transforming $a$ into $b$.
It was shown in~\cite[Theorem 6.24]{EG.1987} that for $a,b \in \mathcal{RW}_w$ we have $a\equiv_{\text{CK}} b$ if and only
if $\mathsf{EG}_P(a) = \mathsf{EG}_P(b)$. 

We give a characterization of the length of the first column of the pair $(P,Q)$ obtained via Edelman-Greene insertion.  This gives an analog of 
the well-known corresponding result for RSK, and will come in handy when studying the quasicrystal and crystal skeleton on reduced words 
in \S\ref{section.quasicrystalreducedwords}.

\begin{proposition}
\label{prop.longest decreasing}
Let $a\in \mathcal{RW}_w$. Then the length of the first column of $\mathsf{EG}_P(a)$ is equal to the length of the longest strictly decreasing
subsequence of $a$.
\end{proposition}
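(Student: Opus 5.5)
We prove the two inequalities separately, by induction on the length of $a$, and track how the first column of the insertion tableau changes when a single letter is inserted. Write $a = a_1 \cdots a_d$ and let $P_j$ denote the insertion tableau of $a_1\cdots a_j$. Since $a$ is reduced, so is every prefix; in particular no two adjacent letters are equal. We also use that Edelman--Greene insertion into a row-strict, column-strict tableau produces a row-strict, column-strict tableau, so each column of $P_j$ is strictly increasing from bottom to top.

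\emph{Step 1: the first column of $P_d$ gives a decreasing subsequence.} We strengthen the statement so that it survives induction: for every cell $(r,1)$ of $P_j$, there is a strictly decreasing subsequence of $a_1\cdots a_j$ of length $r$ ending in a letter $\leqslant P_j(r,1)$. The claim with the weaker conclusion ``ending in a letter $\leqslant P_j(r,1)$'', rather than ``equal'', is the right invariant, mirroring Schensted's argument. When $a_{j+1}$ is inserted, entries of the first column change only through bumping. Suppose the letter $x$ enters row $r$ and bumps $y$. Then $y$ arrives at row $r+1$ either as an ordinary bumped letter or in the special case $y=x+1$. We need to show that any new entry placed in $(r+1,1)$ is at least the last letter of some decreasing subsequence of length $r+1$. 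For this we also carry along a second invariant: every entry in row $r$ is $\geqslant$ the last letter of a decreasing subsequence of length $r$ that ends at a letter no larger than it. This is the standard ``patience'' invariant, and it needs checking in the special B-case, where $x+1$ is pushed up without being replaced. The new letter $a_{j+1}$ itself lies in row $1$, and it either settles there (length $1$) or bumps a larger letter, which is consistent with the invariant.

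\emph{Step 2: a decreasing subsequence gives at least that many rows.} For the reverse inequality we use the Coxeter--Knuth invariance of $P$ (\cite[Theorem 6.24]{EG.1987}) instead of direct tracking. The plan is to show that the length of the longest strictly decreasing subsequence is invariant under the relations K1, K2 and B of \eqref{equation.CK}. This is a local check.
\begin{itemize}
\item For K1 and K2, the argument is the classical one for Knuth relations: the swapped pair in $xzy \leftrightarrow zxy$ is never used together in a decreasing sequence in a way that cannot be rerouted through the third letter.
\item For B, a decreasing subsequence uses at most one $x$ and at most one $x+1$ from each side, and both $x\,x{+}1\,x$ and $x{+}1\,x\,x{+}1$ contain the decreasing pair $(x+1,x)$ in the right relative positions.
\end{itemize}
Given invariance, we replace $a$ by the row reading word of $P=\mathsf{EG}_P(a)$, which lies in $\mathcal{RW}_w$ and is Coxeter--Knuth equivalent to $a$. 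For that word, the longest strictly decreasing subsequence has length equal to the number of rows of $P$: reading the first column from top to bottom gives one decreasing sequence, and column-strictness together with row-strictness forbids any longer one, since a strictly decreasing sequence in the reading word can take at most one letter per row. With invariance in hand, Step 1 becomes unnecessary, and both inequalities follow at once.

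The main obstacle is the B relation in the invariance check, together with the special bumping rule that it reflects. Some care is needed with boundary effects: letters outside the three-letter window equal to $x$ or $x+1$ cannot occur adjacent in a harmful way, because the word is reduced. If the Coxeter--Knuth route stalls, we fall back on the direct insertion invariant of Step 1, handling the case $y=x+1$ separately by noting that the bumped $x+1$ exceeds an $x$ occurring earlier in row $r$.
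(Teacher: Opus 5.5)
Your Step 2 is essentially the paper's proof: the length of a longest strictly decreasing subsequence is invariant under the contiguous moves K1, K2 and B, and the claim is then checked on a single reading word of $P$ that Edelman--Greene inserts to $P$. You use the row reading word where the paper uses the column reading word, and the ``at most one letter per row'' bound makes your check slightly cleaner. Since Step 2 already gives both inequalities, you can drop Step 1, the fallback, and the boundary/reducedness remarks, because the invariance check is purely local. Dropping Step 1 is also just as well, since its invariant (an upper bound on the last letter of a decreasing subsequence) would not let you append the new letter $a_{j+1}$.
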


\begin{proof}
The column reading word of $P=\mathsf{EG}_P(a)$ inserts (via Edelman--Greene) to $P$.  It is true for the column reading word of $P$ that the length of its longest
strictly decreasing subsequence is equal to the length of the first column of $P$.
Since $a\equiv b$ if and only if $\mathsf{EG}_P(a) = \mathsf{EG}_P(b)$, the result is hence true for one representative in the equivalence class 
of $a$. Inspecting the relations K1, K2, and B, at most two of the letters in the relations can be part of a strictly decreasing subsequence. Furthermore,
if they are part of such a subsequence on the left, then they still are on on the right and vice versa. This proves the statement.
\end{proof}

\begin{remark}
\label{remark.Greene}
Note that the naive Greene generalization of Proposition~\ref{prop.longest decreasing} to further columns does not hold. For example, take the reduced
word $a=23212$. Its Edelman--Greene insertion gives
\[
\mathsf{EG}(a)=
\left(
	\TIKZ[scale=0.35]{
	\Tableau[\scriptsize]{{1,2},{2,3},{3}})
	},\ 
	\TIKZ[scale=0.35]{
	\Tableau[\scriptsize]{{1,2},{3,5},{4}})
	}\right)
\]
which has column sizes 3 and 2. But a Greene-type theorem would give shape $(3,1,1)$ since there is a longest decreasing subsequence 321, but
then two 2's are remaining.
\end{remark}

\subsection{Quasicrystal}\label{section.quasicrystalreducedwords}

In analogy to Proposition~\ref{proposition.quasi edges}, we characterize crystal edges within a quasicrystal in $B_w$.

\begin{proposition}
Let $a,a'\in \mathcal{IW}^\ell_{w}$ such that $a'=f_i(a)$. Then this is an edge inside a quasicrystal if and only if there is no pairing between
the letters in $a^i$ and $a^{i+1}$.
\end{proposition}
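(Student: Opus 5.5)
The plan is to transport the statement to the recording tableau, using Theorem~\ref{theorem.EG crystal}, and then invoke Proposition~\ref{proposition.quasi edges}.

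First, fix how quasicrystals sit inside $B_w$. Since $\mathsf{EG}_Q$ intertwines the $f_i$ (and likewise the $e_i$, by the same theorem in \cite{MS.2016}), it is a crystal isomorphism from each connected component of $B_w$ onto some $B(\lambda)$. The quasicrystals of $B_w$ are the preimages of the quasicrystals $Q_T$ of $B(\lambda)$, so $a$ and $a'$ lie in the same quasicrystal exactly when $\mathsf{std}(\mathsf{EG}_Q(a)) = \mathsf{std}(\mathsf{EG}_Q(a'))$. Write $b = \mathsf{EG}_Q(a)$ and note $f_i(b) = \mathsf{EG}_Q(a')$. By Proposition~\ref{proposition.quasi edges}, the edge $a \to a'$ stays inside a quasicrystal if and only if no $i+1$ is paired with an $i$ in $\mathsf{row}(b)$.

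It therefore suffices to show that the number of pairs in $\mathsf{row}(b)$ equals the number of pairs formed by the pairing procedure on $a^i, a^{i+1}$. I would count pairs via string lengths.
\begin{itemize}
\item In $b$, the number of pairs is $\mathsf{wt}(b)_i - \varphi_i(b)$, since the unpaired $i$'s are exactly the letters $f_i$ can still change.
\item In $a$, each unpaired letter of $a^i$ is used by one application of $f_i$, so the number of pairs is $\ell(a^i) - |U_i(a)|$. This needs $\varphi_i(a) = |U_i(a)|$: applying $f_i$ should remove one element of $U_i$ and create no new pair, so $|U_i|$ drops by exactly one, and $f_i(a) = 0$ once $U_i = \emptyset$. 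This is the standard fact from \cite{MS.2016} that the number of unpaired letters of $a^i$ is the $i$-string length.
\end{itemize}
The weights agree, because $\mathsf{wt}(a) = \mathsf{wt}(\mathsf{EG}_Q(a))$, as each letter of $a^i$ is recorded by an $i$ in $Q$. The string lengths agree, $\varphi_i(a) = \varphi_i(b)$, because $\mathsf{EG}_Q$ intertwines $f_i$ and is injective. Hence the two pair counts coincide, and one is zero if and only if the other is.

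The main obstacle is justifying $\varphi_i(a) = |U_i(a)|$ from the pairing rule, in particular that applying $f_i$ leaves the pairing of the other letters intact. I would first try to cite \cite{MS.2016}, where this is part of verifying that the construction is a Stembridge crystal. Failing that, I would check directly that moving $t = \max U_i$, inserted into $a^{i+1}$ as $t+s$, removes exactly one unpaired letter. Here $t+s+1 \notin a^i$ and the block $t+1, \dots, t+s$ of $a^i$ was already paired with larger letters of $a^{i+1}$. The new letter $t+s$ of $a^{i+1}$ therefore finds no available partner above it in $a^i$, and the pairing of all other letters is unchanged.
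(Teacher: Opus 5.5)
Your proposal is correct and is essentially the paper's argument: transport the edge along $\mathsf{EG}_Q$ and apply Proposition~\ref{proposition.quasi edges}. The paper cites \cite{MS.2016} for the fact that $a^i,a^{i+1}$ have a pairing if and only if the letters $i,i+1$ in $\mathsf{EG}_Q(a)$ do, whereas you derive this by counting pairs as $\mathsf{wt}_i-\varphi_i$ on both sides, which is fine once $\varphi_i(a)=|U_i(a)|$ is granted.

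One correction to your fallback check of that identity: the letters $t+1,\dots,t+s$ of $a^i$ are paired with the \emph{smaller} letters $t,\dots,t+s-1$ of $a^{i+1}$, not larger ones, since any partner of a letter of $a^i$ above $t$ must be $\geqslant t$ because $t$ stays free. It is this, together with $t+s\notin a^{i+1}$, that forces every letter of $a^i$ exceeding $t+s+1$ to be taken by a letter of $a^{i+1}$ exceeding $t+s$, and hence leaves the new $t+s$ unpaired.
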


\begin{proof}
This follows from Proposition~\ref{proposition.quasi edges} and the fact that there is a pairing between letters in $a^i$ and $a^{i+1}$ if and only if
there is a pairing between the letters $i$ and $i+1$ in $\mathsf{EG}_Q(a)$ (see~\cite{MS.2016}).
\end{proof}

Recall that $\mathcal{RW}_w$ is the set of reduced words of $w^{-1}$.
Define $\mathsf{red} \colon \mathcal{IW}^\ell_{w} \to \mathcal{RW}_w$ by mapping $a\in \mathcal{IW}^\ell_w$ to the underlying reduced word.
For example, for $a=(2)(13)(2)()(13)$ we have $\mathsf{red}(a)=213213$.

\begin{corollary}
\label{cor.reduced word}
Increasing factorizations $a,b\in \mathcal{IW}^\ell_w$ are in the same quasicrystal if and only if $\mathsf{red}(a)=\mathsf{red}(b)$.
\end{corollary}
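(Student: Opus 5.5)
The plan is to reduce the corollary to the standard description of quasicrystals via standardization, and then transport it through the Edelman--Greene correspondence. By Theorem~\ref{theorem.EG crystal}, the map $\mathsf{EG}_Q$ intertwines the crystal operators $f_i$ on $B_w$ with those on semistandard tableaux. Fixing the insertion tableau $P$, it therefore identifies the connected component of $B_w$ containing $a$ with a crystal $B(\lambda)$. The quasicrystal containing $a$ is, under this identification, the set of $b$ with $\mathsf{std}(\mathsf{EG}_Q(b)) = \mathsf{std}(\mathsf{EG}_Q(a))$. Equivalently, it is the connected component of $a$ under the edges singled out by the preceding proposition, namely the $f_i$ edges with no pairing between $a^i$ and $a^{i+1}$. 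I would take as the working definition that $a,b$ lie in the same quasicrystal if and only if $\mathsf{EG}_P(a)=\mathsf{EG}_P(b)$ and $\mathsf{std}(\mathsf{EG}_Q(a))=\mathsf{std}(\mathsf{EG}_Q(b))$. Theorem~\ref{theorem.QC connected} guarantees that this agrees with the connected-component description.

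The key step is a compatibility of standardization with EG insertion: $\mathsf{EG}(\mathsf{red}(a)) = (\mathsf{EG}_P(a), \mathsf{std}(\mathsf{EG}_Q(a)))$. This holds because the generalized EG insertion of $a$ inserts exactly the letters of $\mathsf{red}(a)$ in the same order. The only difference is the recording: a letter from factor $j$ is recorded as $j$ rather than by its position. Within one factor the letters are strictly increasing, so each inserts further right than the previous one and their recorded cells form a horizontal strip filled left to right. Replacing the $j$'s from left to right by consecutive integers is therefore exactly the standard recording tableau. I would verify the horizontal strip claim via the usual row-bumping lemma for increasing sequences, adapted to EG insertion (the special case $y=x+1$ still bumps into the next row at weakly left positions). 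This adaptation is the step I expect to need the most care.

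Given this, both directions follow from the Edelman--Greene bijection $\mathcal{RW}_w \leftrightarrow (P,Q)$ with $Q$ standard. If $\mathsf{red}(a)=\mathsf{red}(b)$, then the two have equal $P$ and equal standardized $Q$, hence lie in the same quasicrystal. Conversely, if $a$ and $b$ lie in the same quasicrystal, then $\mathsf{EG}(\mathsf{red}(a))=\mathsf{EG}(\mathsf{red}(b))$, and injectivity of EG on reduced words gives $\mathsf{red}(a)=\mathsf{red}(b)$.

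As a sanity check, I would also argue directly: an $f_i$ edge without pairing moves the letter $t=\max U_i(a)$, replacing it by $t+s$. Unpairedness forces $s=0$, since no letter of $a^{i+1}$ lies between the relevant letters. Moreover, $t$ exceeds every letter of $a^{i+1}$, so moving it to the front of $a^{i+1}$ keeps the factor increasing and leaves the underlying word unchanged.
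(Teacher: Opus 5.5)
Your proof is correct, but it takes a different route from the paper.

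\textbf{The paper's route.} It argues edge by edge. An $f_i$-edge with no pairing between $a^i$ and $a^{i+1}$ forces $a^i_1<\cdots<a^i_{k_i}<a^{i+1}_1<\cdots$. So $f_i$ just moves $a^i_{k_i}$ to the front of $a^{i+1}$, and $\mathsf{red}$ is unchanged. An edge with pairing instead reorders letters and changes $\mathsf{red}$.

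\textbf{Your route.} You argue globally, through the identity $\mathsf{EG}(\mathsf{red}(a))=(\mathsf{EG}_P(a),\mathsf{std}(\mathsf{EG}_Q(a)))$ and the injectivity of Edelman--Greene on $\mathcal{RW}_w$.

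\textbf{What your route buys.} It treats both implications symmetrically. In particular, it genuinely proves that two factorizations of the same reduced word lie in one quasicrystal. The local edge check does not establish this on its own: it only shows that each quasicrystal sits inside a fiber of $\mathsf{red}$. One would still have to rule out a fiber splitting into several quasicrystals, e.g.\ by checking that any two admissible cuttings of a fixed word into increasing factors are connected by unpaired moves.

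\textbf{What it costs.} You need the row-bumping lemma for EG insertion of a strictly increasing sequence. You are right that semistandardness of $\mathsf{EG}_Q(a)$ alone does not determine the order in which the cells of a horizontal strip were added, so this lemma is genuinely required. It is standard (Edelman--Greene, Morse--Schilling), while the paper's local argument avoids it entirely.

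\textbf{A slip in your sanity-check paragraph.} This paragraph is not load-bearing for your main argument, but the inequality is reversed. Absence of pairing means no letter of $a^i$ exceeds any letter of $a^{i+1}$. So $t=\max a^i$ is \emph{smaller} than every letter of $a^{i+1}$, which is exactly what allows it to be prepended to $a^{i+1}$ while keeping that factor increasing. Also, $s=0$ holds simply because $U_i(a)=a^i$, so $t=\max a^i$ and $t+1\notin a^i$. It has nothing to do with the letters of $a^{i+1}$.
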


\begin{proof}
It suffices to prove the statement for $a,b$ such that $b=f_i(a)$. When there is no pairing between the letters in $a^i$ and $a^{i+1}$, we have
$a_1^i<\cdots <a_{k_i}^i < a_1^{i+1} < \cdots < a_{k_{i+1}}^{i+1}$. In this case $b^i = a_1^i\ldots a_{k_i-1}^i$ and $b^{i+1}
= a_{k_i}^i a_1^{i+1}\ldots a_{k_{i+1}}^{i+1}$ and hence $\mathsf{red}(a)=\mathsf{red}(b)$. If there is a pairing between the letters in $a^i$ and $a^{i+1}$,
we have $a_{k_i}^i > a_1^{i+1}$ and it is not hard to see that the order of the letters in $b$ changes compared to $a$. Hence 
$\mathsf{red}(a)\neq \mathsf{red}(b)$.
\end{proof}

Corollary~\ref{cor.reduced word} implies that the quasicrystals in $B_w$ are indexed by reduced words of $w^{-1}$.
Furthermore, for a given $r\in \mathcal{RW}_w$ the elements of the corresponding quasicrystal are all increasing factorizations into 
$\ell$ factors of $r$. For an example, see Figure~\ref{figure.B121}.

Since the character of a quasicrystal is Gessel's quasisymmetric function indexed by the descent composition associated to the 
quasicrystal, we can rewrite~\eqref{equation.stanley} as
\[
	\mathcal{F}_w = \sum_{a\in \mathcal{RW}_w} F_{\mathsf{Des}(a)}.
\]

\begin{example}
Take $w=s_1 s_2 s_1 \in S_3$. There are two reduced words in $\mathcal{RW}_w$, namely $121$ and $212$ with descent compositions
$(2,1)$ and $(1,2)$, respectively. Hence $\mathcal{F}_{s_1 s_2 s_1} = F_{(2,1)} + F_{(1,2)}$. Compare also with Figure~\ref{figure.B121}.
\end{example}

\subsection{Crystal skeleton}
\label{ss:CS Fw}
Having reviewed the crystal structure on increasing factorizations, we are now interested in the corresponding crystal skeleton.

The crystal skeleton $\mathsf{CS}_w$ associated to the $\mathfrak{sl}_p$-crystal $B_w$ for $p\geqslant \ell(w)$ is obtained by 
contracting the quasicrystals. Hence by Corollary~\ref{cor.reduced word}, the vertices of $\mathsf{CS}_w$ can be labelled by reduced 
words in $\mathcal{RW}_w$. We will now give a more concrete description of the edges in the crystal skeleton.

\begin{definition}
\label{definition.edges CSw}
Let $\ell:=\ell(w)$ be the length of $w\in S_n$. For $a,b \in \mathcal{RW}_w$ with $a=a_1\ldots a_\ell$ and $b=b_1\ldots b_\ell$,
define an edge $a\stackrel{~I~}{\longrightarrow} b$ for $I=[i,i+2m]$ with $m\geqslant 1$ if 
\begin{enumerate}
\item $a_i<\cdots<a_{i+m} > a_{i+m+1} < \cdots < a_{i+2m}$;
\item Setting $a^1=a_i \ldots a_{i+m}$ and $a^2 = a_{i+m+1} \ldots a_{i+2m}$, all letters in $a^2$ are paired under the pairing in 
Section~\ref{section.crystal Fw}. Setting $b^1 b^2 = f_1(a^1 a^2)$, $b$ is obtained from $a$ by replacing the letters $a_i\ldots a_{i+2m}$ with
$\mathsf{red}(b^1 b^2)$.
\end{enumerate}
\end{definition}

\begin{theorem}
The edges in $\mathsf{CS}_w$ are given by the edges in Definition~\ref{definition.edges CSw}.
\end{theorem}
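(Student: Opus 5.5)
The plan is to transport the abstract description of $\mathsf{CS}(\lambda)$ from \S\ref{ss.CS} along the Edelman--Greene recording map, and then check that Dyck pattern intervals in $Q$ become the conditions of Definition~\ref{definition.edges CSw}. By Theorem~\ref{theorem.EG crystal}, $\mathsf{EG}_Q$ intertwines the crystal operators on $B_w$ with those on $\bigsqcup_P B(\shape P)$. Each connected component of $B_w$, indexed by a fixed insertion tableau $P$, is therefore isomorphic to $B(\lambda)$ with $\lambda=\shape(P)$.

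First I would identify the quasicrystals on both sides. Under this isomorphism, quasicrystals go to quasicrystals, since both are characterized through paired versus unpaired edges (Proposition~\ref{proposition.quasi edges} and the pairing compatibility used in the previous proposition). The quasicrystal of $a$ corresponds to $\mathsf{std}(\mathsf{EG}_Q(a))$, which is the standard recording tableau $Q$ of the reduced word $\mathsf{red}(a)$. So $\mathsf{CS}_w$ is identified with $\bigsqcup_P \mathsf{CS}(\shape P)$ via $r\mapsto \mathsf{EG}_Q(r)$. The edges of $\mathsf{CS}_w$ are then exactly the pairs $r\to r'$ with $\mathsf{EG}_P(r)=\mathsf{EG}_P(r')$ and $\mathsf{EG}_Q(r')=I\cdot \mathsf{EG}_Q(r)$ for a Dyck pattern interval $I$ of $\mathsf{EG}_Q(r)$.

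Second, I would translate the Dyck pattern condition into a statement about the word. Remark~\ref{remark.outpacing} says $I$ is a Dyck interval exactly when the letters of $I^-$ and of $I^+$ each form horizontal strips in $Q$ and $I^+$ outpaces $I^-$. For recording tableaux, the positions $i,\dots,i+m$ forming a horizontal strip (with increasing columns) means $a_i<\dots<a_{i+m}$, by the EG analogue of the row-bumping lemma; likewise for $I^+$. The requirement that $i+m+1$ start a new strip, so that the two strips are not merged, corresponds to $a_{i+m}>a_{i+m+1}$.

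The outpacing/bracketing condition is the hard part. I would reduce to the two-factor case by considering the factorization $a^1a^2$ of the subword $a_i\cdots a_{i+2m}$. Put it in an $\mathfrak{sl}_2$-crystal with $\ell=2$, restrict via the branching property \eqref{equation.branching} and its promotion/Lusztig counterpart so that only the interval $I$ matters, and compare with the two-row crystal $B(m+1,m)$. In $B(\lambda)$ restricted to letters $1,2$, the corresponding element has all but one letter paired. By the equivalence of pairing in $a^1,a^2$ with pairing of $1,2$ in $\mathsf{EG}_Q$ (cited from \cite{MS.2016}), this says every letter of $a^2$ is paired. Conversely, the unique unpaired letter is the one moved by $f_1$, and $f_1(a^1a^2)$ realizes the edge from the quasicrystal of $Q$ to that of $\cycle(I)\cdot Q$, since crystal skeleton edges arise from single $f_i$ edges between quasicrystals. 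That gives $b$.

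The main obstacle is justifying the localization to the subword on $I$: the EG recording tableau of a factor of the word is not simply a restriction of $Q$. I would handle it with the subcrystal and branching properties from \cite{BCDS.2025}, together with the Coxeter--Knuth invariance of EG insertion. Any backup argument would have to check directly that the EG shape of the subword equals that of the restriction of $Q$ to $I$ after jeu de taquin, which is standard for RSK recording tableaux and which I expect carries over to EG.
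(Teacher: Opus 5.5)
\textbf{There is a genuine gap: the localization step you flag is never proved, and it is the crux.}

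Your global set-up is fine. By Theorem~\ref{theorem.EG crystal}, $\mathsf{EG}_Q$ identifies $\mathsf{CS}_w$ with $\bigsqcup_P \mathsf{CS}(\shape P)$. The EG row-bumping fact ($j$ is a descent of the standard recording tableau iff $a_j>a_{j+1}$) correctly converts the horizontal-strip part of the Dyck condition into condition (1) of Definition~\ref{definition.edges CSw}.

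The tools you name do not close the gap. The branching, promotion, Lusztig and subcrystal properties are statements about the abstract graph on standard Young tableaux. They say nothing about how the EG recording tableau of the isolated factor $a_i\cdots a_{i+2m}$ relates to $\mathsf{EG}_Q(a)|_{[i,i+2m]}$. Coxeter--Knuth invariance concerns $\mathsf{EG}_P$, not recording tableaux of factors. Truncating on the right is trivial, since $\mathsf{EG}_Q(a_1\cdots a_j)=\mathsf{EG}_Q(a)|_{[1,j]}$. Truncating on the left is the real content, and ``I expect it carries over'' is not a proof. The claim is true (it can be deduced, e.g., from the Reiner--Shimozono plactification), but it must be established.

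Your sentence ``that gives $b$'' is also only asserted. Here $f_1(a^1a^2)$ is an edge in the crystal of a different permutation. You still need an edge of $B_w$ joining the quasicrystals of $a$ and $b$, and a check that its label is $I$.

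\textbf{The missing idea, which the paper's proof uses, removes the need for any localization.} Embed $a$ as $\tilde a\in\mathcal{IW}_w$ with $a^1=a_i\cdots a_{i+m}$ and $a^2=a_{i+m+1}\cdots a_{i+2m}$ as two consecutive factors, say the $p$-th and $(p+1)$-st, and every other letter as its own factor. Let $Q=\mathsf{EG}_Q(a)$ be the standard recording tableau of $a$.
\begin{itemize}
\item Then $\mathsf{std}(\mathsf{EG}_Q(\tilde a))=Q$, the letters $p$ occupy exactly the cells labeled by $I^-$, and the letters $p+1$ exactly those labeled by $I^+$. So the reading word of $\mathsf{EG}_Q(\tilde a)$ restricted to $\{p,p+1\}$ is $\pi|_I$ relabeled.
\item Since $\pi|_I$ is a shuffle of two increasing runs, RSK shape $(m+1,m)$ is equivalent to every $p+1$ being bracketed, i.e.\ $\varepsilon_p=0$ and $\varphi_p=1$.
\item Since $\mathsf{EG}_Q$ intertwines $e_p$ and $f_p$, this is equivalent to every letter of $a^2$ being paired. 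You need this count-level statement, not merely the ``a pairing exists iff a pairing exists'' equivalence you quote.
\item Now $f_p(\tilde a)$ is literally the operation of Definition~\ref{definition.edges CSw}(2), so $\mathsf{red}(f_p\tilde a)$ is the word $b$ of the definition. This gives one inclusion immediately.
\item For the other inclusion, the unique unbracketed $p$ has label $k$, and the balanced prefix before it contains exactly $k-i$ letters $p+1$. Hence $\mathsf{std}(f_p\,\mathsf{EG}_Q(\tilde a))=\cycle(I)\cdot Q$.
\item Since $\mathsf{EG}_P$ is unchanged, bijectivity of EG identifies $\mathsf{red}(f_p\tilde a)$ with the endpoint of the skeleton edge labeled $I$.
\end{itemize}
With this substitution your outline becomes a complete argument, and a more careful one than the paper's brief sketch of the converse.
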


\begin{proof}
Let $a,b\in \mathcal{RW}_w$ be as in Definition~\ref{definition.edges CSw}. Place $a_i\ldots a_{i+m}$ and
$a_{i+m+1}\ldots a_{i+2m}$ into increasing factors and all other letters in $a$ into their own factor to obtain $\tilde{a}\in B_w$. By (2) in 
Definition~\ref{definition.edges CSw} the crystal operator $f_i(\tilde{a})$ is defined and $\mathsf{red}(f_i(\tilde{a}))=b$. Hence each
edge as in Definition~\ref{definition.edges CSw} is indeed an edge in $\mathsf{CS}_w$.

Conversely, suppose that $f_j \tilde{a} = \tilde{b}$ in $B_w$ with $a:=\mathsf{red}(\tilde{a})$, $b:=\mathsf{red}(\tilde{b})$ and $a\neq b$, so that
by Corollary~\ref{cor.reduced word} there is an edge in $\mathsf{CS}_w$. Since $\mathsf{EG}_Q$ intertwines with crystal operators and
on standard tableaux the edges in the crystal skeleton are labeled by intervals $I=[i,i+2m]$ with $m\geqslant 1$, it follows as in the
proof of Corollary~\ref{cor.reduced word} that $a$ must satisfy condition (1) of Definition~\ref{definition.edges CSw}. Under the Edelman--Greene
map, the description of $I$ on the recording tableau is precisely the crystal operator in (2) of Definition~\ref{definition.edges CSw}.
\end{proof}

As in the case of permutations, one can associate a descent composition to reduced words $a\in \mathcal{RW}_w$. Note that a descent in $a$ 
is equivalent to a subword of the form $a_{i-1}.a_{i}$, where $a_{i-1} > a_i$. Let $d(a)$ be the length the descent composition of $a$.
We now consider how the descent composition changes between vertices in $\mathsf{CS}_w$ connected by an edge $I$. 
Recall that by Corollary~\ref{corollary.component change} only length preserving edges in $\mathsf{CS}_w$ can switch components
in the quasicrystal skeleton.

\begin{proposition}
Suppose $a,b \in \mathcal{RW}_w$ are vertices in $\mathsf{CS}_w$ with an edge $I = [i,i+2m]$ from $a$ to $b$. Then 
\begin{enumerate}
    \item $d(a) < d(b)$ if and only if $a_{i+2m} < a_{i+2m+1} < a_{i+m}$ and in the bracketing between $a^1$ and $a^2$, 
    the unique unbracketed element in $a^1$ is $a_{i+m}$.
    \item $d(a) > d(b)$ if and only if $a_i < a_{i-1} < a_{i+1}$ and in the bracketing between $a^1$ and $a^2$, 
    the unique unbracketed element in $a^1$ is $a_{i}$.
\end{enumerate}
Otherwise, $d(a) = d(b)$.
\end{proposition}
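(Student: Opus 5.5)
The plan is to compare the descent sets of $a$ and $b$ position by position. By Definition~\ref{definition.edges CSw}, $a$ and $b$ agree outside the window $[i,i+2m]$, so only three kinds of adjacent pairs can change status: pairs strictly inside the window, and the two boundary pairs $(i-1,i)$ and $(i+2m,i+2m+1)$. Since $d$ is one more than the number of descents, it suffices to count descents.

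\emph{Step 1: inside the window the descent count is preserved.} In $a$ the window $a^1a^2$ has exactly one descent, namely at position $i+m$, by condition~(1). Write $b^1b^2=f_1(a^1a^2)$. Then $b^1$ has $m$ letters and $b^2$ has $m+1$ letters, and each factor is increasing. So the only possible internal descent of $b$ is at the junction between $b^1$ and $b^2$. That junction is a descent exactly when some letter of $b^2$ is paired with a letter of $b^1$. I will check that after $f_1$ removes the unpaired letter $t$ and inserts $t+s$, the $m\geqslant 1$ pairs of $a^1a^2$ survive, so $b^1b^2$ has a pairing. I expect this to follow from the standard fact that $f_1$ preserves the number of paired letters. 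Hence $b$ also has exactly one descent inside the window.

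\emph{Step 2: identify the boundary letters of $b$.} The letter $t$ removed by $f_1$ is the unique unbracketed element of $a^1$.
\begin{itemize}
\item First letter. If $t\neq a_i$, then $b_i=a_i$. If $t=a_i$, then $b_i=a_{i+1}>a_i$.
\item Last letter. If $t=a_{i+m}$, then $t$ is the maximum of $a^1$, so $s=0$ and the inserted letter is $a_{i+m}$. I will argue that $a_{i+m}>a_{i+2m}$; otherwise $a_{i+2m}$ would be paired with an element below $a_{i+m}$, which forces $a_{i+m}$ to be paired. So $b_{i+2m}=a_{i+m}$, which exceeds $a_{i+2m}$.
\item If $t<a_{i+m}$, the letters $t+1,\dots,t+s$ all lie in $a^1$ and all are bracketed. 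I will show $t+s<a_{i+2m}$, so that the last letter is unchanged. The argument uses the pairing order together with reducedness, which forbids $t+s$ from coinciding with a letter of $a^2$.
\end{itemize}
This inequality $t+s<a_{i+2m}$ is the step I expect to be the main obstacle. I would first try an induction on the pairing procedure, processing $a^2$ from its largest letter downward.

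\emph{Step 3: read off the boundary comparisons.} Because $m\geqslant1$, $t$ cannot equal both $a_i$ and $a_{i+m}$, so at most one boundary changes.
\begin{itemize}
\item Right boundary, when $t=a_{i+m}$. The descent status at $(i+2m,i+2m+1)$ changes from $a_{i+2m}>a_{i+2m+1}$ to $a_{i+m}>a_{i+2m+1}$. Since $a_{i+m}>a_{i+2m}$, a descent can only be gained, never lost. It is gained exactly when $a_{i+2m}<a_{i+2m+1}<a_{i+m}$. Equality $a_{i+2m+1}=a_{i+m}$ is impossible, because $b$ would then have two equal adjacent letters and would not be reduced. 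This gives (1).
\item Left boundary, when $t=a_i$. The comparison changes from $a_{i-1}$ versus $a_i$ to $a_{i-1}$ versus $a_{i+1}$. Since $a_{i+1}>a_i$, a descent can only be lost. It is lost exactly when $a_i<a_{i-1}<a_{i+1}$, where again equality is excluded by reducedness of $b$. This gives (2).
\end{itemize}
In all other cases both boundary letters are unchanged, and with Step 1 this gives $d(a)=d(b)$.
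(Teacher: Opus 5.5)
Your overall strategy is sound, but the inequality you flagged as the main obstacle is false, and the conclusion ``at most one boundary changes'' in Step 3 depends on it.

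\textbf{Counterexample.} Take $w=s_1s_2s_1$, $a=121$, $I=[1,3]$. Then $a^1=12$ and $a^2=1$. The unpaired letter is $t=1=a_i<a_{i+m}$, and $s=1$. So $f_1(a^1a^2)=(2)(12)$ and $b=212$. The last letter of the window changes from $a_{i+2m}=1$ to $t+s=2$, so $t+s<a_{i+2m}$ fails. Placing the same window inside $a=31213\to b=32123$ (with $I=[2,4]$) shows that both boundary letters can change at once. The failure is not tied to $t=a_i$. For the window $a^1a^2=(245)(14)$ one has $t=4$ and $s=1$, so $24514$ becomes $25145$. The last letter changes even though $t$ is neither $a_i$ nor $a_{i+m}$.

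\textbf{What is true.} Since $t$ is never paired, a letter $x<t$ of $a^2$ is always paired with a letter of $a^1$ smaller than $t$. Two consequences follow.
\begin{enumerate}
\item If $t<a_{i+m}$, then $t\leqslant a_{i+2m}$. Otherwise every letter of $a^2$ would be paired below $t$, leaving $a_{i+m}$ unpaired as well.
\item For $1\leqslant j\leqslant s$, the partner of $t+j$ lies in $[t,t+j-1]$. Inductively, $t,t+1,\dots,t+s-1\in a^2$.
\end{enumerate}
Hence, when $t<a_{i+m}$, the last letter of the new window is $\max(a_{i+2m},t+s)$. This equals either $a_{i+2m}$ or, only when $s\geqslant1$, $a_{i+2m}+1$.

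\textbf{Why the right boundary still behaves.} In the second case the descent status at $(i+2m,i+2m+1)$ does not change. Comparing $a_{i+2m+1}$ with $a_{i+2m}$ versus with $a_{i+2m}+1$ gives different answers only if $a_{i+2m+1}=a_{i+2m}$, which reducedness of $a$ excludes.

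\textbf{The repair.} Replace ``at most one boundary letter changes'' by ``at most one boundary comparison changes its descent status.'' The left one can change only when $t=a_i$, and the right one only when $t=a_{i+m}$. With that correction your bookkeeping goes through. This is exactly the $s>0$ case the paper disposes of (``$t+s=a_{i+2m}+1$ \dots\ impossible for integers'').

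A minor point: $a_{i+m}>a_{i+2m}$ holds simply because $a_{i+2m}$ must be paired with a larger letter of $a^1$. The justification you give for it does not work as stated.
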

\begin{proof}
We prove (1); the argument for (2) is analogous.

If $a_{i+2m} < a_{i+2m+1} < a_{i+m}$ and in the bracketing between $a^i$ and $a^{i+1}$, the unique unbracketed element in $a^{i}$ is $a_{i+m}$, 
then  \[ f_i(a^ia^{i+1}) = a_{i} \cdots a_{i+m-1}.a_{i+m+1}\cdots a_{i+2m} a_{i+m}.\] Since $a_{i+m}> a_{i+2m+1}$, we have created a descent. 
On the other hand, suppose $d(a)< d(b)$. By construction, $f_i(a^ia^{i+1})$ moves the position of a descent, but does not create a new one. 
Hence, the descent must be created away from $a^ia^{i+1}$; note that this can only happen after $a^ia^{i+1}$ since $f_i$ removes an 
element from $a^i$ and adds it to $a^{i+1}$. Since only one term is unbracketed in $a^ia^{i+1},$ we may write $U_i(a) = \{ t \}$. 
Recalling that $s:= \min \{ j \geqslant 0\mid t+j+1 \not \in a^i \},$ we thus have that under the action of $f_i$, the element $t$ is removed from 
$a^i$ and $t+s$ is added to $a^{i+1}$. Since a descent was added from applying $f_i$, it must be that $t+s$ appears at the end of $f_i(a^i a^{i+1})$, 
forcing  $t+s > a_{i+2m+1}$ and $t+s > a_{i+2m}$. Moreover, we must have $a_{i+2m} < a_{i+2m+1}$, or a new descent could not have been created 
by $f_i$.

If $s=0$, since $t$ is the unique unbracketed element in $a^i$ and $a_{i} < a_{i+1} < \cdots < a_{i+m},$ this forces $t = a_{i+m}$, and thus yields the claim.
If $s>0$, then $t+s=a_{i+2m}+1$ since $t+s$ must sit at the end of $f_i(a^i a^{i+1})$. But then $a_{i+2m}<a_{i+2m+1}<a_{i+2m}+1$ which is impossible
for integers. Hence this case cannot happen.
\end{proof}

The example $\mathsf{CS}_w$ for $w=s_1 s_2 s_1 s_3 s_2 s_1$ is given in Figure~\ref{figure.QCS321}.
Similar graphs restricted to the long permutation $w_0$ and edges $|I|=3$ arise from the study of tubing lattices~\cite{DF.2024}.
Note that $\mathcal{F}_{w_0} = s_\delta$, where $\delta$ is the staircase partition of size $\ell(w_0)$. 
Our construction of $\mathsf{CS}_w$ works for all $w\in S_n$.

We can derive the Schur expansion of the Stanley symmetric functions from the crystal skeleton by noting that, when the vertices
of the crystal skeleton indexed by $\lambda$ are labeled by standard Young tableaux, then the superstandard tableau obtained by 
consecutively filling the numbers $1,2,\ldots,|\lambda|$ into $\lambda$ bottom to top, left to right is a natural representative of the 
connected crystal skeleton. These superstandard tableaux correspond to the following reduced words $a\in \mathcal{RW}_w$. Let
$\alpha=\mathsf{Des}(a)$ with $\alpha=(\alpha_1,\ldots,\alpha_d)$ and define the subword
\[
	a^j = a_{\alpha_1+\cdots+\alpha_{j-1}+1}\; a_{\alpha_1+\cdots+\alpha_{j-1}+1} \; \ldots \; a_{\alpha_1+\cdots+\alpha_j} 
	\qquad \text{for $1\leqslant j \leqslant d$.}
\]
Let $\mathcal{RW}_w^{\mathsf{hw}}$ be the set of all $a\in \mathcal{RW}_w$ such that all letters in $a^{j+1}$ are bracketed with the letters 
in $a^j$. In particular, this implies that $\mathsf{Des}(a)$ for $a \in \mathcal{RW}_w^{\mathsf{hw}}$ is a partition.

\begin{corollary}
For $w\in S_n$, we have
\[
	\mathcal{F}_w = \sum_{a\in \mathcal{RW}_w^{\mathsf{hw}}} s_{\mathsf{Des}(a)}.
\]
\end{corollary}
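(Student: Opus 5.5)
We derive this from the crystal skeleton $\mathsf{CS}_w$ rather than by manipulating words directly. By the discussion above, the vertices of $\mathsf{CS}_w$ are the reduced words in $\mathcal{RW}_w$. The Edelman--Greene recording map $a \mapsto \mathsf{EG}_Q(a)$ (a standard tableau, after restricting to factorizations with one letter per factor) intertwines the crystal operators by Theorem~\ref{theorem.EG crystal}. Hence, as in the proof identifying the edges of $\mathsf{CS}_w$, it induces a graph isomorphism
\[
\mathsf{CS}_w \cong \bigsqcup_{P} \mathsf{CS}(\shape(P)),
\]
where $P$ runs over the Coxeter--Knuth classes (insertion tableaux) of $w^{-1}$. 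It also sends $\mathsf{Des}(a)$ to $\mathsf{Des}(\mathsf{EG}_Q(a))$, since the quasicrystal containing $a$ has character $F_{\mathsf{Des}(a)}$. Each connected component of $\mathsf{CS}_w$ therefore has character $s_\lambda$ with $\lambda = \shape(P)$, and
\[
\mathcal{F}_w = \sum_{P} s_{\shape(P)}.
\]
It then suffices to pick, in each component, exactly one distinguished vertex and check that its descent composition equals $\lambda$.

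The distinguished vertex is the preimage of the superstandard tableau $S_\lambda$ of shape $\lambda$. The key step is the claim that, for $a \in \mathcal{RW}_w$,
\[
\mathsf{EG}_Q(a) \text{ is superstandard} \iff a \in \mathcal{RW}_w^{\mathsf{hw}}.
\]
To prove it, I would read $a$ as an increasing factorization $a^1 a^2 \cdots a^d$ according to $\mathsf{Des}(a)$, viewed in $B_w$ with $d$ factors. Its recording tableau is the semistandard tableau $Q$ obtained from $\mathsf{EG}_Q(a)$ by destandardizing along the descents. The condition that all letters of $a^{j+1}$ are bracketed with letters of $a^j$, in the pairing of \S\ref{section.crystal Fw}, says exactly that $e_j$ kills the factorization for every $j$. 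So the factorization is a highest weight element of $B_w$. By Theorem~\ref{theorem.EG crystal}, $Q$ is then a highest weight semistandard tableau, i.e.\ the Yamanouchi tableau whose row $j$ is filled with $j$'s. Its weight is $\mathsf{Des}(a)$, which forces $\mathsf{Des}(a)$ to be a partition equal to $\shape(P)$, and its standardization is the superstandard tableau.

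For the converse, suppose $\mathsf{EG}_Q(a) = S_\lambda$. Then $\mathsf{Des}(S_\lambda) = \lambda$, since the descents occur exactly at the row ends. Grouping $a$ by these descents gives the factorization whose recording tableau is the Yamanouchi tableau of shape $\lambda$, which is highest weight. Intertwining again shows that all $e_j$ vanish, i.e.\ $a \in \mathcal{RW}_w^{\mathsf{hw}}$.

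Since each $\mathsf{CS}(\lambda)$ contains exactly one superstandard tableau, the elements of $\mathcal{RW}_w^{\mathsf{hw}}$ are in bijection with the components, one per component, each contributing $s_{\mathsf{Des}(a)}$. This gives the formula.

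The main obstacle is the bookkeeping in the equivalence above. One must verify that grouping by descents really gives a valid element of $\mathcal{IW}^d_w$ whose recording tableau destandardizes $\mathsf{EG}_Q(a)$; this holds because letters within an ascending run are recorded left to right. One must also check that the highest weight condition on $B_w$ matches the bracketing condition defining $\mathcal{RW}_w^{\mathsf{hw}}$, citing~\cite{MS.2016} that pairings in $a^j a^{j+1}$ correspond to pairings of $j, j+1$ in $\mathsf{EG}_Q$.
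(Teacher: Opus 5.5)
Your proposal is correct and follows the paper's route: you take the superstandard tableau as the representative of each connected component $\mathsf{CS}(\lambda)$ of $\mathsf{CS}_w$, and you identify the reduced words whose $\mathsf{EG}_Q(a)$ is superstandard as exactly $\mathcal{RW}_w^{\mathsf{hw}}$. Your justification of that identification (grouping $a$ into increasing factors by its descents, and using the intertwining of Theorem~\ref{theorem.EG crystal} to match highest weight factorizations with Yamanouchi recording tableaux) supplies the detail the paper only asserts.
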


\begin{example}
Let $w=s_3 s_5 s_1 s_2 s_4 s_3 s_1$. Then the elements in $a \in \mathcal{RW}_w$ such that $\mathsf{Des}(a)$ is a partition are
\begin{align*}
	& 13.25.14.3 && 13.25.4.3.1 && 35.14.23.1 && 15.34.23.1\\
	& 35.12.14.3 && 135.24.3.1 && 135.24.13 &&
\end{align*}
Of these, only $135.24.13$ and $135.24.3.1$ have the property that all letters in $a^{j+1}$ are paired with letters in $a^j$. Hence
$\mathcal{F}_w = s_{(3,2,2)} + s_{(3,2,1,1)}$.
\end{example}

\subsection{Quasicrystal skeleton}
Finally, we consider the quasicrystal skeleton on reduced words.

To determine whether the edge $a \stackrel{I}{\longrightarrow} b$ in $\mathsf{CS}_w$ with $I=[i,i+2m]$ changes the quasicrystal skeleton component,
it suffices to check whether $\mathsf{EG}_Q(a)$ satisfies the conditions in Theorem~\ref{theorem.component change}. 

Some of the conditions in Theorem~\ref{theorem.component change} can be checked directly on $a \in \mathcal{RW}_w$. For example, to check that 
$i$ appears in the first column of $\mathsf{EQ}_Q(a)$, by Proposition~\ref{prop.longest decreasing} one just needs to check that the length of the longest strictly decreasing subsequence in
$a_1\ldots a_{i-1}$ increases by one compared to $a_1\ldots a_i$. Similarly, to check that $k>i$, by Proposition~\ref{prop.longest decreasing} it suffices
to check that the length of the longest decreasing subsequence in $a_1\ldots a_{i+m}$ increases by one  compared to $a_1 \ldots a_{i+m+1}$. By Remark~\ref{remark.Greene} it is not possible to do the same for subsequent columns.

However, Reiner and Shimozono~\cite{RS.1995} defined the \defn{plactification} of a reduced word of $w^{-1} \in S_n$
\[
	\mathfrak{p} \colon \mathcal{RW}_w \to [n-1]^{\ell(w)}
\]
such that for $a,a' \in \mathcal{RW}_w$ with $\mathsf{EG}_P(a) = \mathsf{EG}_P(a')$ we have $P(\mathfrak{p}(a)) = P(\mathfrak{p}(a'))$,
where $P(v)$ denotes the RSK insertion tableau of the word $v$. Furthermore $\mathsf{EG}_Q(a) = Q(\mathfrak{p}(a))$, where $Q(v)$ denotes
the RSK recording tableau of $v$. The plactification map $\mathfrak{p}$ on $a=a_1\ldots a_\ell \in \mathcal{RW}_w$ is defined in terms of crystal reflection 
operators $s_i$ on words as defined in~\eqref{equation.si} as follows:
\[
	\mathfrak{p}(a) := a_1 s_{a_1}\left( a_2 s_{a_2} \left( \cdots a_{\ell-1} s_{a_{\ell-1}}(a_\ell) \cdots \right) \right). 
\]

We may now give a way to check \ref{condition1} of Theorem~\ref{theorem.component change} without reference to the Edelman--Greene insertion,
but rather using \defn{Greene's Theorem}~\cite{Greene.1974} (see also~\cite[Theorem 3.5.3]{Sagan.2001}). Suppose $v$ is a word, $\lambda$ is the shape
of $P(v)$, and $\lambda^t$ is the transpose of $\lambda$. Then Greene's Theorem states that
\[
	\lambda^t_1+ \lambda^t_2 + \cdots + \lambda^t_j = \text{length of a longest $j$-decreasing subsequence of $v$.}
\]
Here a \defn{$j$-decreasing subsequence} of $v$ is a union of $j$ mutually distinct decreasing subsequences $v^{(1)},\ldots, v^{(j)}$ of $v$.

\begin{theorem}
\label{theorem.EG conditions}
Suppose $a \stackrel{I}{\longrightarrow} a'$ is an edge in $\mathsf{CS}_w$ with $I=[i,i+2m]$. Then $\mathsf{EG}_Q(a)$ satisfies \ref{condition1} of 
Theorem~\ref{theorem.component change} if and only if in $p=p_1\ldots p_\ell = \mathfrak{p}(a)$ 
\begin{enumerate}
\item the length of a longest $j$-decreasing subsequence of $p_1\ldots p_{i-1+j}$ is $j$ larger than a longest $j$-decreasing subsequence of 
$p_1\ldots p_{i-1}$ for $1\leqslant j \leqslant k-i+1$ for some $k>i$, and
\item the lengths of the longest $(k-i+1)$-decreasing subsequence in $p_1\ldots p_{k+m}$ and $p_1 \ldots p_{i+2m}$ are the same.
\end{enumerate}
\end{theorem}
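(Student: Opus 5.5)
We reduce everything to Greene's Theorem applied to prefixes of the plactification. Write $Q=\mathsf{EG}_Q(a)$. By the Reiner--Shimozono property, $Q=Q(\mathfrak{p}(a))$, the RSK recording tableau of $p=\mathfrak{p}(a)$. RSK insertion is prefix-compatible, so the shape of $Q|_{[1,t]}$ equals the shape $\lambda^{(t)}$ of $P(p_1\ldots p_t)$. By Greene's Theorem, $G_j(t):=(\lambda^{(t)})^t_1+\cdots+(\lambda^{(t)})^t_j$ is the length of a longest $j$-decreasing subsequence of $p_1\ldots p_t$. In other words, $G_j(t)$ is the number of entries of $Q$ that are at most $t$ and lie in the first $j$ columns. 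Both conditions (1) and (2) will then be read off as statements about which columns of $Q$ contain the letters $i,\ldots,i+2m$.

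\textbf{Condition (1) of the statement.} Consider the differences $G_j(i-1+j)-G_j(i-1)$. This quantity counts the letters $i,\ldots,i+j-1$ of $Q$ lying in the first $j$ columns, so it equals $j$ exactly when all of $i,\ldots,i+j-1$ lie in columns $1,\ldots,j$. By Remark~\ref{remark.outpacing}, the letters of $I^-=[i,i+m]$ form a horizontal strip read left to right. Hence $i,\ldots,i+j-1$ lie in $j$ distinct, increasing columns, and they lie in the first $j$ columns iff $i+j-1$ sits exactly in column $j$, which forces $i,\ldots,i+j-1$ into columns $1,\ldots,j$. So condition (1) for all $1\le j\le k-i+1$ is equivalent to: the letters of $[i,k]$ occupy the first $k-i+1$ columns, with $k>i$. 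This is precisely the positional content of \ref{condition1}.

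\textbf{The role of (2).} The integer $k$ in the statement is existentially quantified, and it must be identified with the unbracketed letter of Definition~\ref{def:cycle}. Condition (2) should pin it down. The difference of the longest $(k-i+1)$-decreasing subsequences of $p_1\ldots p_{i+2m}$ and $p_1\ldots p_{k+m}$ counts the letters of $[k+m+1,i+2m]$ lying in the first $k-i+1$ columns. Requiring this to be zero says that no such letter lies in those columns.

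\textbf{Direction \ref{condition1} $\Rightarrow$ (1),(2).} Assume the true unbracketed letter $k$ satisfies \ref{condition1}. By Lemma~\ref{lemma.column separation} and its proof, the letters of $[k+1,i+m]$ lie right of column $c_0=k-i+1$. Their bracketing partners in $[k+m+1,i+2m]$ lie weakly right of them, and they cannot share column $c_0$ with $k$. So (2) holds, and (1) follows from the previous paragraph.

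\textbf{Direction (1),(2) $\Rightarrow$ \ref{condition1}.} Let $k^*$ be the value for which (1) and (2) hold. From (1), the letters $i,\ldots,k^*$ occupy columns $1,\ldots,k^*-i+1$. The $I^+$ letters $i+m+1,\ldots,k^*+m$ outpace them (Remark~\ref{remark.outpacing}), so they too lie in the first $k^*-i+1$ columns. By (2), all later $I^+$ letters lie in columns beyond $k^*-i+1$. Now run the bracketing of $\pi|_I$. The first $k^*-i$ letters of $I^-$ pair with $I^+$ letters to their left. The letter $k^*$, sitting in column $k^*-i+1$, has every remaining $I^+$ letter strictly to its right, so it has no partner and is the unique unbracketed letter. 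Hence $k^*=k$, and \ref{condition1} follows.

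\textbf{Main obstacle.} The delicate point is this bracketing argument in the reverse direction: showing that $k^*$ is unpaired and that no earlier letter of $I^-$ is the unpaired one. This requires a careful reading of the bracket order against the column order in $Q$. A secondary risk is confirming that Greene's Theorem applies to prefixes in the needed form (prefix-compatibility of RSK), and that the recording-tableau identity $Q=Q(\mathfrak{p}(a))$ holds for each prefix. The latter should follow since $\mathfrak{p}$ of a prefix is a prefix of $\mathfrak{p}(a)$ up to the nested $s_{a_j}$ actions, and this needs to be checked against the definition.
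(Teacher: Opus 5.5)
\textbf{Verdict: genuine gap in the reverse direction.} Your Greene's-Theorem setup and your analysis of (1) are correct. They are exactly what the paper's one-line proof (Remark~\ref{YCTrows}, Greene's Theorem, and the plactification) relies on. The worry about applying $\mathfrak{p}$ to prefixes is moot. You only use prefixes of $p=\mathfrak{p}(a)$, together with $Q(p)=\mathsf{EG}_Q(a)$ and the standard RSK fact $Q(p)|_{[1,t]}=Q(p_1\cdots p_t)$.

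\textbf{Why the reverse direction fails.} You treat $k$ as existentially quantified and expect (2) to force $k^*$ to equal the unbracketed letter. That claim is false. Take $w^{-1}=s_2s_1s_3s_2$ and $a=2132$.
\begin{itemize}
\item $\mathsf{EG}_Q(a)$ has bottom row $1\,3$ and top row $2\,4$.
\item $I=[2,4]$ gives the edge $2132\to 2312$ in $\mathsf{CS}_w$.
\item Here $\pi|_I=2\,4\,3$, so $4$ is bracketed with $3$ and the unbracketed letter is $k=2=i$. Hence \ref{condition1} fails.
\item Yet $k^*=3$ satisfies (1): we have $\mathfrak{p}(a)=2121$, the letter $2$ lies in column $1$ and $3$ lies in column $2$.
\item $k^*=3$ also satisfies (2), vacuously, since $k^*+m=i+2m=4$.
\end{itemize}
The step that breaks is ``the first $k^*-i$ letters of $I^-$ pair with $I^+$ letters to their left.'' Here $i=2$ sits in column $1$ with nothing to its left, and it is precisely the unpaired letter. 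No column-by-column outpacing count can repair this, because bracketing in $\pi|_I$ is governed by rows. An $I^+$ letter precedes an $I^-$ letter in $\mathsf{row}(T)$ if and only if it lies in a strictly higher row.

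\textbf{The repair.} Take $k$ to be the unbracketed letter of Definition~\ref{def:cycle}, as in \ref{condition1} itself; this is how the paper's proof implicitly reads it. Then $[i,k]\subseteq I^-$, and your horizontal-strip argument already shows that (1) is equivalent to \ref{condition1}. Condition (2) becomes a consequence rather than a constraint that pins down $k$.

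\textbf{Justifying (2).} Your stated reason, that the bracketing partners ``lie weakly right of them,'' is false. In the example after Definition~\ref{def:cycle}, the letter $9$ in column $5$ is bracketed with $11$ in column $4$. The conclusion is nevertheless correct, by the following argument:
\begin{itemize}
\item Since $k$ is unbracketed, every open bracket preceding $k$ in $\pi|_I$ is matched with one of $[i,k-1]$. So these open brackets are exactly $[i+m+1,k+m]$.
\item Hence each $x\in[k+m+1,i+2m]$ follows $k$ in $\mathsf{row}(T)$. That is, $x$ lies in a strictly lower row than $k$, or to the right of $k$ in the same row.
\item In the lower-row case, $x>k$ together with standardness forces the column of $x$ to exceed that of $k$, which is $k-i+1$.
\end{itemize}
So no letter of $[k+m+1,i+2m]$ lies in the first $k-i+1$ columns, which is exactly (2).
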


\begin{proof}
This follows directly from Remark~\ref{YCTrows}, Greene's Theorem and the properties of the plactification map~$\mathfrak{p}$.
\end{proof}

\begin{example}
Consider the reduced word $a=23212$ from Remark~\ref{remark.Greene} and $I=[1,3]$. In Remark~\ref{remark.Greene}, the Edelman--Greene
insertion and recording tableaux were computed. Note that $\mathsf{EG}_Q(a)$ satisfies \ref{condition1} of Theorem~\ref{theorem.component change}
with $k=2$. We compute the plactification as 
\[
	\mathfrak{p}(a) = 2 s_2 \left( 3 s_3 \left( 2 s_2 \left( 1 s_1 2 \right) \right) \right) = 23211.
\]
The longest 1-decreasing subsequence of $\mathfrak{p}(a)$ is $321$ which is of length 3 and the longest 2-decreasing subsequence is 321,21 which is 
of length 5 confirming that the column lengths of $\mathsf{EG}_P(a)$ are indeed 3 and 2.

Furthermore, the longest $1$-decreasing (resp. $2$-decreasing) subsequence of $2$ (resp. $23$) is one (resp. two) longer than the longest $j$-decreasing
subsequence of $\emptyset$ for $1\leqslant j \leqslant 2$, which are of length zero. This confirms (1) in Theorem~\ref{theorem.EG conditions}.
Condition (2) is trivially satisfied since $k+m=i+2m=3$.
\end{example}

\ref{condition2} of Theorem~\ref{theorem.component change} is much harder to check directly on the reduced words $a \in \mathcal{RW}_w$
due to the fact that the bijection $\varphi^{-1}$ of Definition~\ref{def.phi} is involved in computing the composition shape $\alpha$.

\bibliographystyle{plain}
\bibliography{paper}
\end{document}